\theoremstyle {definition} \newtheorem {defin}{Definition}  [section] 
\theoremstyle {plain}  \newtheorem {thm} [defin]{Theorem}
\theoremstyle {plain}  \newtheorem {cor} [defin]{Corollary}
\theoremstyle {plain} \newtheorem {prop} [defin]{Proposition}
\theoremstyle {plain} \newtheorem {lem}[defin] {Lemma}
\theoremstyle {plain} \newtheorem {rem}[defin]{Remark}
\theoremstyle {plain} \newtheorem {ex}[defin]{Example}
\theoremstyle {plain}  \newtheorem {notat} [defin]{Notation}
\theoremstyle {plain} \newtheorem {prop defin}[defin]{Proposition-Definition}
\theoremstyle {plain}
\newcommand{\vect}{\textnormal{Vect}}
\newcommand{\rg}{\textnormal{rk}}
\newcommand{\ho}{\textnormal{Hom}}
\newcommand{\aut}{\textnormal{Aut}}
\newcommand{\iso}{\textnormal{Iso}}
\newcommand{\obj}{\textnormal{Obj}}
\newcommand{\loga}{\textnormal{Log}}
\def\wln{{\widetilde{\log}}}
\def\wMzero{{\widetilde{M_0}}}
\def\wMi{{\widetilde{M_i}}}
\def\wNzero{{\widetilde{N_0}}}
\def\wNi{{\widetilde{N_i}}}
\def\wM_\infty{{\widetilde{M_\infty}}}
\def\wN_\infty{{\widetilde{N_\infty}}}
\def\wS_1{{\widetilde{S_1}}}
\def\wF_1{{\widetilde{F_1}}}
\def\wFi{{\widetilde{F_i}}}
\def\wW_1{{\widetilde{W_1}}}
\def\ww_1{{\widetilde{w_1}}}
\def\wG_1{{\widetilde{G_1}}}
\def\wGi{{\widetilde{G_i}}}
\def\wR{{\widetilde{R}}}
\def\wmzero{{\widetilde{m_0}}}
\def\wm_\infty{{\widetilde{m_\infty}}}
\def\wf_1{{\widetilde{f_1}}}
\def\wa_0{{\widetilde{a_0}}}
\def\wa_\infty{{\widetilde{a_\infty}}}
\def\wb{{\widetilde{b}}}
\def\wa{{\widetilde{a}}}
\def\wc{{\widetilde{c}}}
\def\wz{{\widetilde{z}}}
\newcommand{\revc}{\widetilde{\mathbb{C}^\star}}
\newcommand{\sigz}{\Sigma_0}
\newcommand{\sigi}{\Sigma_\infty}
\newcommand{\pcz}{\pi^\star\mathbb{C}(z)}
\newcommand{\cz}{\mathbb{C}(z)}
\newcommand{\puip}{\phi_p}
\newcommand{\widu}{\widetilde{1}}
\newcommand{\zer}{\textnormal{Zeros}}
\newcommand{\pol}{\textnormal{Poles}}
\newcommand{\GL}{\mathrm{GL}}
\begin{document}

\sloppy

\title{A density theorem for the difference Galois groups of regular singular Mahler equations}
\author{Marina Poulet}
\address{Univ Lyon, Universit\'e Claude Bernard Lyon 1, CNRS UMR 5208, Institut Camille Jordan, F-69622 Villeurbanne, France}
\email{poulet@math.univ-lyon1.fr}
\keywords{Mahler equations, difference Galois theory, density theorem.}
\subjclass[2010]{39A06, 12H10, 11R45}
\date{October 18, 2021}

\begin{abstract}
The difference Galois theory of Mahler equations is an active research area. The present paper aims at developing the analytic aspects of this theory. We first attach a pair of connection matrices to any regular singular Mahler equation. We then show that these connection matrices can be used to produce a Zariski-dense subgroup of the difference Galois group of any regular singular Mahler equation.
\end{abstract}

\maketitle

\vspace{-1cm}

\thispagestyle{plain}
\setcounter{tocdepth}{2}
\setcounter{secnumdepth}{4}
\titlecontents{section}
[1.5em]
{}
{\contentslabel{1.3em}}
{\hspace*{-0.3em}}
{\titlerule*[1pc]{.}\contentspage}
{\addvspace{2em}\bfseries\large}
\titlecontents{subsection}
[3.8em]
{} 
{\contentslabel{2em}}
{\hspace*{-1.2em}}
{\titlerule*[1pc]{.}\contentspage}
\renewcommand{\contentsname}{Contents}
\thispagestyle{plain}
 \pdfbookmark[0]{\contentsname}{Contents}
\tableofcontents
\newpage

\section{Introduction}\label{sec:intro}

A Mahler equation of order $n\in\mathbb{N}^\star$ is a functional equation of the form
\begin{equation}\label{eq:Mahler}
a_n(z)f\left(z^{p^n}\right)+a_{n-1}(z)f\left(z^{p^{n-1}}\right)+\cdots+a_0(z)f(z)=0
\end{equation}
with $a_0(z),\ldots,a_n(z)\in\mathbb{C}(z)$, with $a_0(z)a_n(z)\neq 0$ and with $p$ an integer greater than or equal to $2$. The solutions of these equations are called Mahler functions. The study of these functions began with the work of Mahler on the algebraic relations between their special values in \cite{Mah1,Mah2,Mah3}. This work has been a source of inspiration for many authors, for example, Adamczewski and Bell \cite{AdamBell17}, Adamczewski and Faverjon \cite{AF18}, Brent, Coons and Zudilin \cite{BCZ15}, Chyzak, Dreyfus, Dumas and Mezzarobba \cite{CDDM18}, Fernandes \cite{Fer19}, Loxton \cite{Lox84}, Loxton and van der Poorten \cite{LoxPoort77}, Nishioka \cite{Nish96}, Pellarin \cite{Pel11}.
A few years ago, Philippon in \cite{Phi15}, then Adamczewski and Faverjon in \cite{AF17}, showed that the algebraic relations between special values of Mahler functions come from algebraic relations between the functions; hence the importance of the study of algebraic relations between Mahler functions. Moreover, there is an increased interest in these functions because of their connection with automata theory (see for instance \cite{AS92, Bec, Co68, MenFr80} for more details).

The algebraic relations between Mahler functions are reflected by certain difference Galois groups. This is one of the reasons why the difference Galois theoretic aspects of the theory of Mahler equations have been the subject of many recent works. For more details on this difference Galois theory and applications see for instance \cite{ADH21, DHR18, Phi15, Roq18, VDPS97}. These articles focus on the algebraic aspects of the difference Galois theory of Mahler equations. The analytic aspects have not yet been studied so much. The aim of the present paper is to start to fill this gap: our main objective is to introduce for any Mahler equation a Zariski-dense subgroup of analytic nature of its difference Galois group, which is an algebraic group. 

Our result is inspired by a celebrated density theorem due to Schlesinger for differential equations (see \cite{Sch1895}) and by its generalizations to ($q$-)difference equations discovered by Etingof and developed further by Duval, Ramis, Sauloy, Singer, van der Put. More precisely, the density theorem of Schlesinger ensures that the monodromy of a differential equation with regular singular points is Zariski-dense in its differential Galois group. This theorem was transposed to regular $q$-difference equations by Etingof (see \cite{etingof}) in $1995$ and later to regular singular $q$-difference equations by Sauloy on the one hand (see \cite{Sau03}) and by van der Put and Singer on the other hand (see \cite{VDPS97}). The key to the extensions of Schlesinger's theorem to $q$-difference equations is a connection matrix introduced by Birkhoff playing the role of the monodromy in the differential case. Roughly speaking, this matrix connects $0$ to $\infty$: given a $q$-difference equation, we associate two bases of solutions, one at $0$, the other one at $\infty$ and the connection matrix reflects the linear relations between these two bases of solutions. Likewise, for Mahler equations, the points $0$ and $\infty$ play a particular role because they are fixed points of $z\mapsto z^p$. If a Mahler equation is regular singular at $0$ and $\infty$, we can also attach two bases of solutions, one at $0$ and another one at $\infty$. However, a problem arises: these two bases of solutions can not be connected in general. Indeed, the basis of solutions at $0$ consists of meromorphic functions on the open unit disk while the basis of solutions at $\infty$ consists of meromorphic functions on the complement of the closed unit disk and, by a theorem due to Rand\'e (see \cite{BCR13,Ran92}), the unit circle is in general a natural boundary for these functions. In order to overcome this problem, our key idea is to use the point $1$, which is also a fixed point of $z\mapsto z^p$: we consider three bases of solutions, at $0$, $1$ and $\infty$ and we connect $0$ to $1$ on the one hand and $1$ to $\infty$ on the other hand. In this way, we attach two connection matrices to any regular singular Mahler equation. This allows us to construct particular elements of the Galois groupoid $G$ of the category of regular singular Mahler equations. We will show that these elements generate a Zariski-dense subgroupoid of $G$. We hope that this density theorem will have theoretical applications. For instance, it is expected to be useful for the inverse Galois problem as was the case with the $q$-difference equations (see the resolution of this inverse problem by Etingof \cite{etingof} and by van der Put and Singer \cite{VDPS97}). We hope that it will also give us a better grasp of the structure of the Tannakian Galois group of the category of regular singular Mahler equations.
\newline

We shall now describe more precisely the content of this paper. Section \ref{part1} contains prerequisites and first results about Mahler equations at the points $0$, $1$ and $ \infty$. Locally at $1$, we can use known results about $q$-difference equations, see for instance \cite{Sau00}. Indeed, locally at $1$, a Mahler equation can be seen as a $q$-difference equation with the change of variables $z=\exp(u)$. Locally, to go back to the Mahler system, we can use the principal value $\loga$ but for global considerations, we use a logarithm which is holomorphic on the universal cover $\revc$ of $\mathbb{C}^\star$. 

In Section \ref{part2}, we introduce the category $\mathcal{E}_{sf}$ of strictly Fuchsian Mahler systems and the category $\mathcal{E}_{rs}$ of regular singular Mahler systems at $0$, $1$ and $\infty$. We prove that these categories are equivalent categories.

Section \ref{part3} and Section \ref{part4} deal with the local and global Galois theory respectively. In Section \ref{part3}, we introduce the local categories $\mathcal{E}_{rs}^{(i)}$ with $i\in\lbrace 0,1,\infty\rbrace$. These categories are the localisation of $\mathcal{E}_{rs}$ at the point $i$ in the sense that we consider morphisms defined locally at $i$ (instead of morphisms with coefficients in $\mathbb{C}(z)$ for $\mathcal{E}_{rs}$). We prove that $\mathcal{E}_{rs}^{(i)}$ is equivalent to a simpler category $\mathcal{P}^{(i)}$ which is a neutral Tannakian category over $\mathbb{C}$. The local Galois group at $i$ is the Galois group of $\mathcal{P}^{(i)}$. We describe the local Galois groups: for $i=0,\infty$, we prove that the local Galois group at $i$ is $\mathbb{Z}^{alg}$, the Galois group of the category of finite dimensional complex representations of $\mathbb{Z}$ and for $i=1$, the local Galois group at $1$ is a subgroup of $\mathbb{Z}^{alg}$, described in Corollary \ref{cor:local_Galois_1}. We also describe the local Galois groupoids at $i$, denoted by $G_i$, $i=0,1,\infty$. In particular, we know Zariski-dense subgroupoids of $G_0$, $G_\infty$ (see Sections \ref{sec:repres_of_Z}, \ref{sec:GaloisGroupoid_0}) and of $G_1$ (see Section \ref{sec:GaloisGroupoid_1}). The link between the local Galois group and the local Galois groupoid is the following: let $\mathcal{X}$ be an object of $G_i$, the group of morphisms of $G_i$ from $\mathcal{X}$ to $\mathcal{X}$ is (isomorphic to) the local Galois group at $i$. These local data will be glued together in the global theory.

Section \ref{part4} deals with the global categories. We introduce the category $\mathcal{C}_{rs}$ of connection data, constructed from pairs of connection matrices attached to regular singular Mahler equations. We prove that $\mathcal{E}_{rs}$ and $\mathcal{C}_{rs}$ are equivalent Tannakian categories. More precisely, $\mathcal{C}_{rs}$ is a neutral Tannakian category over $\mathbb{C}$ equipped with fibre functors denoted by $\omega_0$, $\omega_\infty$, $\omega_1^{(\wa)}$, $\wa\in\revc\setminus\lbrace (1,0)\rbrace$.    

In Section \ref{part5}, we construct elements of the Galois groupoid $G$ of the category of regular singular Mahler equations. Roughly speaking, $G$ contains the local data given by the local Galois groupoids $G_0$, $G_1$ and $G_\infty$ and we want to make links between them thanks to the pair of connection matrices $\left(\wMzero,\wM_\infty \right)$. Let $i\in\lbrace 0,\infty\rbrace$ and $\widetilde{E_i}$ be the set of singularities of $\widetilde{M_i}$. We prove that the evaluation of $\widetilde{M_i}$ at a point $\widetilde{z_i}\not\in \widetilde{E_i}$ is a morphism of $G$ which make the link between $G_i$ and $G_1$, we denoted it by $\Gamma_{i,\widetilde{z_i}}$. In order to have a control on the singularities, we consider the Tannakian subcategory $\mathcal{C}_{E_0,E_\infty}$ of $\mathcal{C}$ which contains pairs of connection matrices whose singularities are in the sets $\widetilde{E_0}$ and $\widetilde{E_\infty}$ respectively. Then, we prove the main result of this paper:

\vspace{0.3cm}

\textbf{Theorem} \ref{thm:density_regsing}. 
The local Galois groupoids $G_0, G_1, G_\infty$ and the elements
$\Gamma_{0,\widetilde{z_0}}$, $\Gamma_{\infty,\widetilde{z_\infty}}$ for all $\widetilde{z_0}\not\in\widetilde{E_0}$, $\widetilde{z_\infty}\not\in\widetilde{E_\infty}$ generate a Zariski-dense subgroupoid $H$ of the Galois groupoid $G$ of $\mathcal{C}_{E_0,E_\infty}$.

\addtocontents{toc}{\setcounter{tocdepth}{0}}
\section*{Notations}
In this paper, we fix an integer $p$ greater than or equal to $2$.

We denote by $\mathbb{C}[[z]]$ the ring of formal power series with complex coefficients in the indeterminate $z$ and by $\mathbb{C}\lbrace z\rbrace$ the ring of all power series in $\mathbb{C}[[z]]$ that are convergent in some neighborhood of the origin. We denote by $\mathbb{C}((z))$ the fraction field of $\mathbb{C}[[z]]$ (which is called the field of formal Laurent series) and by $\mathbb{C}(\lbrace z\rbrace)$ the fraction field of $\mathbb{C}\lbrace z\rbrace$. In a similar way, $\mathbb{C}\left[\left[\frac{1}{z}\right]\right]$ is the ring of formal power series with complex coefficients in $1/z$ and $\mathbb{C}\left(\left(\frac{1}{z}\right)\right)$ is its fraction field. We denote by $\mathbb{C}\left\lbrace\frac{1}{z} \right\rbrace$ the convergent power series in $1/z$ and $\mathbb{C}\left(\left\lbrace\frac{1}{z} \right\rbrace\right)$ its fraction field. Similarly, $\mathbb{C}\lbrace z-1\rbrace$ is the ring of all power series that are convergent in some neighborhood of $1$ and $\mathbb{C}\left(\lbrace z-1\rbrace\right)$ is its fraction field. If $E$ is an open set of a Riemann surface, we denote by $\mathcal{M}\left(E\right)$ the field of meromorphic functions on $E$. We consider the universal cover of $\mathbb{C}^\star$, denoted by $\revc$, and let $\widu$ be the point $\left(1,0\right)$ of this Riemann surface. 

If $f$ is a function then $\zer(f)$ (respectively $\pol(f)$) is the set of zeros (respectively of poles) of $f$. If $M=\left(m_{i,j}\right)$ is a matrix then $\pol(M)$ is the union of the sets $\pol\left(m_{i,j}\right)$.

We denote by $\vect^f_{\mathbb{C}}$ the finite dimensional $\mathbb{C}$-vector spaces. If $M$ is a matrix, $\rg(M)$ is its rank, that is the dimension of the vector space generated by its columns.

We denote by $D(0,1)$ the open unit disk and by $\overline{D}(0,1)$ the closed unit disk.

If $\mathcal{C}$ is a category, $\obj\left(\mathcal{C}\right)$ and $\ho\left(\mathcal{C}\right)$ denote respectively the objects and the morphisms of the category $\mathcal{C}$. If $A\in\obj\left(\mathcal{C}\right)$ and $B\in\obj\left(\mathcal{C}\right)$, we denote by $\ho_{\mathcal{C}}\left(A,B\right)$ the class of morphisms in the category $\mathcal{C}$ from $A$ to $B$. If $u\in\ho_{\mathcal{C}}\left(A,B\right)$, we write $u:A\rightarrow B$. If $G$ is a groupoid and $\alpha,\beta\in \obj\left(\mathcal{C}\right)$ then $G\left(\alpha,\beta\right)$ denotes the set of morphisms of $G$ from $\alpha$ to $\beta$.

\addtocontents{toc}{\setcounter{tocdepth}{2}}

\section{Preliminary Results}\label{part1}
\subsection{Difference equations, difference systems and difference modules}

\subsubsection{Generalities}\label{sec:generalities}\leavevmode\par
For more details, the reader is referred to \cite{VDPS97}.

Let $K$ be a field of characteristic $0$ and $\phi : K\rightarrow K$ be an automorphism of $K$. We extend $\phi$ to the vector spaces of matrices with entries in $K$ applying $\phi$ on each entry of the matrices.

\vspace{0.4cm}
\paragraph{The category of difference modules}\leavevmode\par
A \emph{difference module} over the difference field $\left(K,\phi\right)$ is a pair $\left(M,\Phi_M\right)$ consisting of a finite dimensional $K$-vector space $M$ equipped with an automorphism 
$$\Phi_M : M \rightarrow M$$
which is $\phi$-linear, that is
$$\forall\lambda\in K, \forall x,y\in M, \Phi_M (x+\lambda y)=\Phi_M (x)+\phi\left(\lambda\right)\Phi_M\left(y\right).$$

The objects of the \emph{category of difference modules} are the difference modules $\left(M,\Phi_M\right)$ and the morphisms $f$ from the difference module $\left(M,\Phi_M\right)$ to the difference module $\left(N,\Phi_N\right)$
are the $K$-linear maps $f : M\rightarrow N$ such that $$\Phi_N\circ f=f\circ\Phi_M.$$
 Let 
$$C_K=K^\phi=\lbrace x\in K\mid \phi(x)=x\rbrace$$
be the \emph{constant field}. According to \cite{VDPS97}, the category of difference modules is a $C_K$-linear rigid abelian tensor category. The functor which forgets the difference structure, that is,
$$\begin{cases}
\left(M,\Phi_M\right) \leadsto  M \\
f  \leadsto  f
\end{cases}
$$
from the category of difference modules to the category of finite dimensional $K$-vector spaces $\vect^f_K$ is an exact faithful $C_K$-linear tensor functor, that is, it is a fibre functor with values in $K$. Therefore, the category of difference modules is a Tannakian category over $C_K$.

\begin{rem}
If $C_K$ is algebraically closed, from \cite[Section 1.4]{VDPS97}, the category of difference modules is a neutral Tannakian category over $C_K$. By the theory of Tannakian categories, it ensures that this category is equivalent to the category of representations of an affine group scheme.
\end{rem}

For a better understanding of what follows, we give here more details on the tensor structure of the category of difference modules. Let $\left(M,\Phi_M\right)$ and  $\left(N,\Phi_N\right)$ be two objects of this category. Their tensor product is $\left(M\otimes_K N,\Phi_{M\otimes_K N}\right)$ where $\Phi_{M\otimes_K N}$ is the automorphism 
$$
\begin{array}{rccl}
\Phi_{M\otimes_K N}: & M\otimes_K N & \rightarrow & M\otimes_K N \\
 & m\otimes n & \mapsto & \Phi_M(m)\otimes \Phi_N(n).
\end{array}
$$
The unit object is the $1$-dimensional $K$-vector space $\mathbf{1}=Ke$ equipped with the $\phi$-linear automorphism $\Phi_{\mathbf{1}} : e\rightarrow e$. The ring of endomorphisms of $\mathbf{1}$ is $C_K$. The internal Hom is 
$$\underline{\text{Hom}}\left(\left(M,\Phi_M\right),\left(N,\Phi_N\right)\right)=\left(\text{Hom}_K\left(M,N\right),\Phi_{\text{Hom}_K\left(M,N\right)}\right) $$
where $\text{Hom}_K\left(M,N\right)$ denotes the $K$-vector space of $K$-linear morphisms from $M$ to $N$ and $\Phi_{\text{Hom}_K\left(M,N\right)}$ is the $\phi$-linear automorphism 
$$
\begin{array}{rccl}
\Phi_{\text{Hom}_K\left(M,N\right)} : & \text{Hom}_K\left(M,N\right) & \rightarrow & \text{Hom}_K\left(M,N\right) \\
 & \sigma & \mapsto & \Phi_N\circ\sigma\circ\Phi_M^{-1}.
\end{array}
$$
The dual of the object $\left(M,\Phi_M\right)$ is the object 
$\underline{\text{Hom}}\left(\left(M,\Phi_M\right),\left(\mathbf{1},\Phi_{\mathbf{1}}\right)\right)$ that is $$\left(\text{Hom}_K\left(M,K\right),\sigma\in\text{Hom}_K\left(M,K\right) \mapsto\sigma\circ\Phi_M^{-1}\in\text{Hom}_K\left(M,K\right)\right).$$

We notice that the category of difference modules is equivalent to its full subcategory whose objects are the difference modules $\left(K^n,\Phi_{K^n}\right)$, $n\in\mathbb{N}$. We denote by \textit{Diff$\left(K,\phi\right)$} this subcategory. Thus, the category \textit{Diff$\left(K,\phi\right)$} is also a Tannakian category over $C_K$ with the forgetful functor as a fibre functor with values in $K$. In what follows, we identify the morphisms of \textit{Diff$\left(K,\phi\right)$} from $\left(K^m, \Phi_{K^m}\right)$ to $\left(K^n,\Phi_{K^n}\right)$ with matrices in $\mathcal{M}_{n,m}(K)$. We choose the following order on the tensor product of two ordered bases of $K^{n_1}$ and $K^{n_2}$, $n_1,n_2\in\mathbb{N}^\star$:
$$
\begin{array}{rcl}
\lbrace 1,...,n_1\rbrace\times \lbrace 1,...,n_2\rbrace & \rightarrow & \lbrace 1,...,n_1n_2\rbrace\\
\left(i_1,i_2\right) & \mapsto & i_2+n_2(i_1-1).
\end{array}
$$
It gives isomorphisms 
$$
K^{n_1}\otimes_K K^{n_2}\rightarrow K^{n_1n_2}\quad \mbox{and}\quad \mathcal{M}_{n_1,m_1}(K)\otimes_K\mathcal{M}_{n_2,m_2}(K)\rightarrow\mathcal{M}_{n_1n_2,m_1m_2}(K).
$$ 
More precisely, if $A=\left(a_{i,j}\right)\in\mathcal{M}_{n_1,m_1}\left(K\right)$ and $B\in\mathcal{M}_{n_2,m_2}\left(K\right)$ then
$$A\otimes B=\begin{pmatrix}
a_{1,1}B & \cdots & a_{1,m_1}B\\
\vdots & & \vdots\\
a_{n_1,1}B & \cdots & a_{n_1,m_1}B
\end{pmatrix}\in\mathcal{M}_{n_1n_2,m_1m_2}\left(K\right)$$ 
with the previous identification.
This tensor product on matrices is called the Kronecker product.  

\vspace{0.4cm}
\paragraph{Difference systems and equations}\leavevmode\par
Let $$L=a_n\phi^n+a_{n-1}\phi^{n-1}+\cdots+a_0$$
with $a_0,\ldots,a_n\in K$, $a_0a_n\neq 0.$
Any \emph{difference equation of order $n$}
$$Lf=a_n\phi^n(f)+a_{n-1}\phi^{n-1}(f)+\cdots+a_0f=0$$
can be transformed into the difference system of rank $n$
$$\phi\left(Y\right)=A_L Y$$
with $$A_L=\begin{pmatrix}
0 & 1 & 0 & \cdots & 0\\
\vdots & \ddots & \ddots & \ddots & \vdots \\
\vdots & & \ddots & \ddots & 0\\
0 & \cdots & \cdots & 0 & 1\\
-\dfrac{a_0}{a_n} & \cdots & \cdots & -\dfrac{a_{n-2}}{a_n} & -\dfrac{a_{n-1}}{a_n}
\end{pmatrix}\in \GL_n\left(K\right).$$
Conversely, by the cyclic vector lemma (see \cite[Appendix B]{HS99}), any system $$\phi(Y)=BY,\quad B\in \GL_n\left(K\right)$$ is equivalent via the equivalence
$$A\sim \left(\phi\left(F\right)\right)^{-1}AF,\quad F\in \GL_n(K)$$
to a system of the form $\phi(Y)=A_LY$. Thus, any difference system can be associated with a difference equation. In this article, we will focus on difference systems called Mahler systems but the results obtained can therefore be applied to Mahler equations.

The objects of the \emph{category of difference systems} are the pairs $\left(K^n,A\right)$ where $n\in\mathbb{N}$ is called the \emph{rank of} $\left(K^n,A\right)$ and $A\in \GL_n\left(K\right)$. The morphisms from $\left(K^{n_1},A\right)$ to $\left(K^{n_2},B\right)$ are the $R\in\mathcal{M}_{n_2,n_1}(K)$ such that $\phi(R)A=BR$. To simplify notations, we will often denote by $A$ the object $\left(K^n,A\right)$. This object is identified with the system $\phi(Y)=AY$.

\vspace{0.4cm}
\paragraph{The category of difference systems and the category of difference modules are equivalent}\leavevmode\par
We can associate the difference system
$$\phi\left(Y\right)=AY,\quad A\in \GL_n\left(K\right)$$ with the difference module $\left(K^n,\Phi_{K^n}\right)\in\obj\left(\mbox{\textit{Diff}$\left(K,\phi\right)$}\right)$ 
where $$\Phi_{K^n} : Y\in K^n \mapsto A^{-1}\phi(Y)\in K^n$$ and we associate the morphism $R: A\rightarrow B$ in the category of difference systems, introduced in the previous paragraph, with the morphism 
$$f : Y\in K^{n_1}\mapsto RY\in K^{n_2}$$
in the category \textit{Diff$\left(K,\phi\right)$}.
This provides an equivalence of abelian categories between the categories of difference systems and \textit{Diff$\left(K,\phi\right)$}. 
We want to obtain an equivalence of tensor categories. Therefore, we define a tensor product on the category of difference systems as follows, using the Kronecker product: if $\left(K^{n_1},A\right)$ and $\left(K^{n_2},B\right)$ are two objects then 
$$\left(K^{n_1},A\right)\otimes\left(K^{n_2},B\right)=\left(K^{n_1n_2},A\otimes B \right)$$
and if $R_1$ et $R_2$ are two morphisms, their tensor product is $R_1\otimes R_2$.
The above construction provides an equivalence of abelian tensor categories between the category of difference systems and the category \textit{Diff$\left(K,\phi\right)$}. Therefore,  the category of difference systems is a $C_K$-linear rigid abelian tensor category. It is a Tannakian category over $C_K$ with the forgetful functor $\left(K^n,A\right)\leadsto K^n$, $R\leadsto R$. 

As previously for the category of difference modules, let us explain the tensor structure inherited by the category of difference systems. The tensor product is defined thanks to the Kronecker product, as explained above. The unit object is $(K,1)$. Let $\left(K^{n_1},A\right)$ and $\left(K^{n_2},B\right)$ be two objects of this category, the internal Hom is
$$\underline{\text{Hom}}\left(\left(K^{n_1},A\right),\left(K^{n_2},B\right)\right)=\left(K^{n_1n_2},D_{A,B}\right) $$ where $D_{A,B}$ is the inverse of the matrix representing the $C_K$-linear map $$Y\in\mathcal{M}_{n_2,n_1}\left(K\right)\mapsto B^{-1}\phi(Y)A$$ 
in the canonical basis of $\mathcal{M}_{n_2,n_1}\left(K\right)$.
The dual of the object $\left(K^{n_1},A\right)$ is 
$$\underline{\text{Hom}}\left(\left(K^{n_1},A\right),\left(K,1\right)\right)=\left(K^{n_1},\left({}^t A\right)^{-1}\right). $$

\subsubsection{The category of Mahler systems with different base fields}
\label{sec:catMahler_E,E^i}
\paragraph{The category \texorpdfstring{$\mathcal{E}\left(K_{p^\infty}\right)$}{E(Kp∞)}}\leavevmode\par

The category of Mahler systems with base field $K_{p^\infty}:=\bigcup_{n\geq 0}K_{p^n}$ with $K_{p^n}=\mathbb{C}\left(z^{1/p^n}\right)$ is the particular case of categories of difference systems, introduced in Section \ref{sec:generalities}, where $K=K_{p^\infty}$ and the field automorphism of $K$ is $\phi=\puip$ defined by 
$$
\begin{array}{rccl}
\puip: & K & \rightarrow & K \\
       & f(z) & \mapsto & f\left(z^p\right).
\end{array}
$$

We will denote it by $\mathcal{E}\left(K_{p^\infty}\right)$. In this case, the constant field is $C_K=K^{\puip}=\mathbb{C}$. From Section \ref{sec:generalities}, we know that it is a neutral Tannakian category over $\mathbb{C}$.

In Proposition \ref{prop:E_Tannakian}, we prove that if the base field is $K=\mathbb{C}(z)$, it is also a neutral Tannakian category over $\mathbb{C}$. Since, in this case, the map $\puip$ is not bijective, it is not a particular case of the results of Section \ref{sec:generalities}.

\begin{rem}
The field $\mathbb{C}$ can be replaced by any algebraically closed field of characteristic zero.
\end{rem} 

\paragraph{The category \texorpdfstring{$\mathcal{E}$}{E}}\leavevmode\par

We will consider the category of Mahler systems whose base field is $K=\cz$ equipped with the injective endomorphism $\puip$, that is, the objects are the pairs $\left(\cz^n,A\right)$ where $n\in\mathbb{N}$ and $A\in \GL_n\left(\cz\right)$ and the morphisms from $\left(\cz^{n_1},A\right)$ to $\left(\cz^{n_2},B\right)$ are the $R\in\mathcal{M}_{n_2,n_1}(\cz)$ such that $\puip(R)A=BR$. We denote by $\mathcal{E}$ this category. It is a subcategory of $\mathcal{E}\left(K_{p^\infty}\right)$.

\begin{prop}\label{prop:E_Tannakian}
The category $\mathcal{E}$ is a neutral Tannakian category over $\mathbb{C}$.
\end{prop}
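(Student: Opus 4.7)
The plan is to realize $\mathcal{E}$ as a full subcategory of the already-Tannakian category $\mathcal{E}(K_{p^\infty})$, stable under all Tannakian operations, so that $\mathcal{E}$ inherits the structure of a Tannakian category over $\mathbb{C}$. Concretely, I would introduce the embedding $\iota : \mathcal{E} \to \mathcal{E}(K_{p^\infty})$ sending $(\mathbb{C}(z)^n,A)$ to $(K_{p^\infty}^n,A)$ (viewing $A \in GL_n(\mathbb{C}(z))$ inside $GL_n(K_{p^\infty})$) and acting on morphisms by inclusion of matrix entries.

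The first key step is to prove that $\iota$ is fully faithful. Faithfulness is immediate; for fullness one needs the descent statement that, if $R \in \mathcal{M}_{n_2,n_1}(K_{p^\infty})$ satisfies $\puip(R) A = BR$ for $A, B$ with entries in $\mathbb{C}(z)$, then $R$ itself has entries in $\mathbb{C}(z)$. I would argue by considering the smallest integer $n$ with $R \in \mathcal{M}_{n_2,n_1}(K_{p^n})$ and deriving a contradiction if $n \geq 1$: since $\puip(K_{p^n}) \subseteq K_{p^{n-1}}$, the matrix $\puip(R)A$ has entries in $K_{p^{n-1}}$, hence so does $BR$; multiplying by $B^{-1} \in GL_{n_2}(\mathbb{C}(z)) \subseteq GL_{n_2}(K_{p^{n-1}})$ then forces $R$ itself to lie in $\mathcal{M}_{n_2,n_1}(K_{p^{n-1}})$, contradicting minimality.

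The remaining task is to verify that the essential image of $\iota$ is closed under the Tannakian operations. For tensor product, dual, internal Hom and direct sum this is transparent, since the Kronecker product, the transposed inverse, the matrix $D_{A,B}$ of \ref{sec:equiv_syst/modules} and the block-diagonal sum all lie in $GL(\mathbb{C}(z))$ as soon as $A$ and $B$ do. The substantive point --- and what I expect to be the main obstacle --- is stability under sub-objects and quotients, which is required for abelianness and is more delicate because $\puip$ is not surjective on $\mathbb{C}(z)$. Given $R : (\mathbb{C}(z)^{n_1},A) \to (\mathbb{C}(z)^{n_2},B)$ in $\mathcal{E}$, one chooses a $\mathbb{C}(z)$-basis of $\ker R = \{Y \in \mathbb{C}(z)^{n_1} \mid RY = 0\}$ assembled into a full-rank matrix $P \in \mathcal{M}_{n_1,k}(\mathbb{C}(z))$. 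The identity $RA^{-1} = B^{-1}\puip(R)$ shows that $A^{-1}\puip(\ker R) \subseteq \ker R$, so there is a unique $C' \in \mathcal{M}_k(\mathbb{C}(z))$ with $A^{-1}\puip(P) = P C'$; the injectivity of $\puip$ on $\mathbb{C}(z)$ preserves nonvanishing of determinants, so $\puip(P)$ still has rank $k$ and $C'$ is invertible. Thus $(\mathbb{C}(z)^k,(C')^{-1})$ is an object of $\mathcal{E}$ realising the kernel; cokernels are handled symmetrically via the quotient $\mathbb{C}(z)^{n_2}/\im R$. Once this is in place, the forgetful fibre functor of $\mathcal{E}(K_{p^\infty})$ restricts along $\iota$ to a fibre functor on $\mathcal{E}$, and all Tannakian axioms over $\mathbb{C}$ are inherited.
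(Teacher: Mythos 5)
Your proposal is correct, but the crucial step --- existence of kernels --- is handled by a genuinely different argument from the paper's. The paper first takes the kernel $\left(C,K\right)$ of $R$ in the ambient Tannakian category $\mathcal{E}\left(K_{p^\infty}\right)$, whose entries a priori lie only in some $K_{p^{n_0}}$, and then repairs it: applying $\puip^{n_0}$ makes the entries rational, and composing with the explicit gauge matrix $F_A=\bigl(\prod_{i=1}^{n_0}\puip^{n_0-i}\left(A\right)\bigr)^{-1}$ turns $\puip^{n_0}\left(K\right)$ back into a morphism with target $A$ rather than $\puip^{n_0}\left(A\right)$; cokernels are then obtained by duality. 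You instead build the kernel intrinsically over $\mathbb{C}(z)$: you observe that $\ker R\subset\mathbb{C}(z)^{n_1}$ is stable under $Y\mapsto A^{-1}\puip(Y)$, choose a basis $P$, extract the matrix $C'$ with $A^{-1}\puip(P)=PC'$, and use the injectivity of $\puip$ on $\mathbb{C}(z)$ (preservation of nonvanishing minors) to see that $C'$ is invertible, so $\left(\mathbb{C}(z)^k,(C')^{-1}\right)$ with the inclusion $P$ is a kernel in $\mathcal{E}$; this is essentially the difference-module kernel construction carried out by hand over the smaller field. Your route is more self-contained and has the bonus of the descent lemma showing that $\mathcal{E}$ is actually a \emph{full} subcategory of $\mathcal{E}\left(K_{p^\infty}\right)$ (a fact the paper does not state), whereas the paper's route is shorter because it reuses the kernel already known to exist upstairs and only has to twist it back down. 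Both arguments are sound, and the verification of stability under tensor product, duals and internal Hom is identical in the two treatments.
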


\begin{proof}
The category $\mathcal{E}$ is a rigid tensor subcategory of $\mathcal{E}\left(K_{p^\infty}\right)$ because it is stable under tensor product, it contains the unit object, the internal Hom and every object has a dual (see Section \ref{sec:generalities} for a description of these elements). To show that it is a Tannakian subcategory of $\mathcal{E}\left(K_{p^\infty}\right)$, the only nontrivial point which remains to be proved is the existence of kernels and cokernels in $\mathcal{E}$. Let us prove that every morphism in $\mathcal{E}$ has a kernel in $\mathcal{E}$. The existence of cokernels will follow from the fact that duality exchanges kernels with cokernels. 
Let $A,B\in\obj\left(\mathcal{E}\right)$ be respectively of rank $n_1$ and $n_2$ and $R\in\ho_{\mathcal{E}}\left(A,B\right)$. The morphism $R$ has a kernel $\left(C,K:C\rightarrow A\right)$ in the Tannakian category $\mathcal{E}\left(K_{p^\infty}\right)$. There exists $n_0\in\mathbb{N}$ such that $\puip^{n_0}\left(C\right), \puip^{n_0}\left(K\right)$ have entries in $\mathbb{C}(z)$. Let $$F_A=\left(\prod\limits_{i=1}^{n_0}\puip^{n_0-i}\left(A\right)\right)^{-1}\in \GL_{n_1}\left(\cz\right).$$ The following diagram is commutative 
$$\xymatrix{A \ar[rr]^{R} & & B\\
&&  \\
\puip^{n_0}\left(A\right)\ar[rr]^{\puip^{n_0}\left(R\right)}\ar[uu]^{F_A} & & \puip^{n_0}\left(B\right)\ar[uu]_{F_B}\\
&& \\
\puip^{n_0}\left(C\right)\ar[uu]^{\puip^{n_0}\left(K\right)}\ar@{.>}[rruu]_{0} & & }$$
and one can check that $\left(\puip^{n_0}\left(C\right), F_A\puip^{n_0}\left(K\right): \puip^{n_0}\left(C\right)\rightarrow A\right)$ is a kernel of $R$ in the category $\mathcal{E}$. 
\end{proof}

We will also consider the category of Mahler systems whose base field is $K=\mathbb{C}\left(\lbrace z\rbrace \right)$ (respectively $K=\mathbb{C}\left(\lbrace z-1\rbrace \right)$ and $K=\mathbb{C}\left(\left\lbrace\frac{1}{z} \right\rbrace\right)$) equipped with $\puip$, we will denote it by $\mathcal{E}^{(0)}$ (respectively by $\mathcal{E}^{(1)}$, by $\mathcal{E}^{(\infty )}$). These are also neutral Tannakian categories over $\mathbb{C}$. Indeed,
for the categories $\mathcal{E}^{(0)}$ and $\mathcal{E}^{(\infty )}$ we can adapt the proof of Proposition \ref{prop:E_Tannakian} and for the category $\mathcal{E}^{(1)}$, $\puip : z\in \mathbb{C}\left(\lbrace z-1\rbrace \right)\mapsto z^p$ is an automorphism of  $\mathbb{C}\left(\lbrace z-1\rbrace \right)$ so it follows from Section \ref{sec:generalities}.

\subsection{Regular singular Mahler systems}
\begin{defin}
A matrix $A$ is \emph{regular at $x\in\mathbb{C}$} if its entries are analytic functions at $x$ and $A(x)\in \GL_n\left(\mathbb{C}\right)$. The matrix $A$ is \emph{regular at $\infty$} if $A\left(1/z\right)$ is regular at $0$.
\end{defin}

\subsubsection{Neighbourhood of the point \texorpdfstring{$0$}{0}}\leavevmode\par
\label{sec:neighbourhood_0}
We consider the Mahler system 
\begin{equation}\label{eq:Mahler_at_0}
\puip\left(Y\right)=AY
\end{equation}
with $A\in \GL_n\left(\mathbb{C}\left(\lbrace z\rbrace\right)\right)$. 

\begin{defin}
The system \eqref{eq:Mahler_at_0} is \emph{strictly Fuchsian at $0$} if $A$ is regular at $0$.
\end{defin}

\begin{defin}
We say that the system \eqref{eq:Mahler_at_0} is \emph{meromorphically equivalent at $0$} to the system 
$$\puip\left(Y\right)=BY$$
with $B\in \GL_n\left(\mathbb{C}\left(\lbrace z\rbrace\right)\right)$ if there exists $T\in \GL_n\left(\mathbb{C}\left(\lbrace z\rbrace\right)\right)$, called a \emph{gauge transformation}, such that 
$$B=\puip(T)^{-1}AT.$$
\end{defin}

The motivation behind this definition is the following: if $Z$ is a solution of the Mahler system \eqref{eq:Mahler_at_0} and $T\in \GL_n\left(\mathbb{C}\left(\lbrace z\rbrace\right)\right)$ then $T^{-1}Z$ is a solution of the system $\puip\left(Y\right)=BY$.

\begin{rem}
Two systems of the form \eqref{eq:Mahler_at_0} are meromorphically equivalent at $0$ if and only if they are isomorphic in the category $\mathcal{E}^{(0)}$.
\end{rem}

\begin{defin}
The system \eqref{eq:Mahler_at_0} is \emph{regular singular at $0$} if it is meromorphically equivalent at $0$ to a strictly Fuchsian one. 
\end{defin}

From now on, we fix a submultiplicative norm $||.||$ on $\mathcal{M}_n\left(\mathbb{C}\right)$.

\begin{thm}\label{thm:fuchs_to_cst_0}
Assume that the system \eqref{eq:Mahler_at_0} is strictly Fuchsian at $0$. There exists a unique $F\in \GL_n\left(\mathbb{C}[[z]]\right)$ such that $F(0)=I_n$ and
\begin{equation}\label{eq:syst_to_cst_0}
\puip\left(F\right)^{-1}AF=A(0).
\end{equation}
We have $F\in \GL_n\left(\mathbb{C}\lbrace z\rbrace \right)$.

Moreover, if we assume that $A\in \GL_n\left(\mathcal{M}\left(D(0,1)\right)\right)$ then $F \in \GL_n\left(\mathcal{M}\left(D(0,1)\right)\right)$.
\end{thm}

\begin{proof} We use an analogue, in the Mahler case, of the Frobenius method for ordinary differential equations. Contrary to the latter one, we do not have to treat several cases depending on the eigenvalues of $A(0)$. First, we construct a formal solution, as it was done in \cite[Proposition 34]{Roq18}. Then, we prove the convergence of the formal solution. We are looking for a matrix $F\in \GL_n\left(\mathbb{C}[[z]]\right)$ such that
\begin{equation}\label{eq:syst_cst_0}
\left\lbrace
 \begin{array}{l}
  AF=\puip\left(F\right)A(0) \\
  F(0)=I_n\, .
    \end{array}
    \right.
\end{equation} 
We write 
$$F(z)=I_n+\sum\limits_{k=1}^{+\infty}F_kz^k=\sum\limits_{k=0}^{+\infty}F_kz^k$$ 
with $F_0=I_n$ and 
$$A(z)=\sum\limits_{k=0}^{+\infty}A_kz^k.$$ 
From \eqref{eq:syst_cst_0}, we have 
$$\left\lbrace
 \begin{array}{l}
  \forall k\in\mathbb{N}, p \nmid k,\, \sum\limits_{j=0}^kA_jF_{k-j}=0 \\
  \forall k\in\mathbb{N}, p\mid k,\, \sum\limits_{j=0}^kA_jF_{k-j}=F_{k/p}A_0\, .
    \end{array}
    \right.$$
Thus, this formal solution is defined by
 $$\left\lbrace
\begin{array}{l}
 F_0=I_n \\
 \forall k\in\mathbb{N}, p \nmid k,\, F_k=-A_0^{-1}\left(\sum\limits_{j=1}^kA_jF_{k-j}\right)  \\
 \forall i\in\mathbb{N}^\star, \, F_{pi}=A_0^{-1}F_iA_0-A_0^{-1}\left(\sum\limits_{j=1}^{pi}A_jF_{pi-j}\right).
\end{array}
    \right.$$ 
    
Now, we have to prove the convergence of the formal solution. We write $f_k:=||F_k||$, $a_k:=||A_k||$ and $b:=||A_0^{-1}||$. We introduce the sequence $\left(\overline{f_n}\right)_{n\geq 0}$ defined by
 $$\left\lbrace
\begin{array}{l}
 \overline{f_0}=f_0 \\
 \forall n\in\mathbb{N}^\star,\, \overline{f_n}=b\sum\limits_{i=0}^{n-1}\left( a_{n-i}+a_0\right)\overline{f_i}\, .
\end{array}
    \right.$$ 
By induction, we can show that for all $k\in\mathbb{N}$, $0\leq f_k\leq\overline{f_k}$. We have
$$\overline{S}(z):=\sum\limits_{j=0}^{+\infty}\overline{f_j}z^j=f_0+b\sum\limits_{j=1}^{+\infty}\sum\limits_{i=0}^{j-1}\left(a_{j-i}+a_0\right)\overline{f_i}z^j=f_0+b\sum\limits_{i=0}^{+\infty}\sum\limits_{j=i+1}^{+\infty}\left(a_{j-i}+a_0\right)\overline{f_i}z^j.$$ This implies 
$$\overline{S}(z)=f_0+b\overline{S}(z)\left(\sum\limits_{j=1}^{+\infty}a_jz^j+a_0\sum\limits_{j=1}^{+\infty}z^j\right).$$
Therefore, $\overline{S}(z)=\dfrac{f_0}{1-b\left(\sum\limits_{j=1}^{+\infty}a_jz^j+a_0\sum\limits_{j=1}^{+\infty}z^j\right)}$ and $\overline{S}$ is the expression as a power series around the center $z=0$ of this holomorphic function. Thus, $\sum\limits_{k=0}^{+\infty}f_kz^k$ has a nonzero radius of convergence. 

It remains to prove that if $A\in \GL_n\left(\mathcal{M}\left(D(0,1)\right)\right)$ then $F \in \GL_n\left(\mathcal{M}\left(D(0,1)\right)\right)$. From the equation \eqref{eq:syst_to_cst_0}, for $z\in D(0,1)$, we have
$$F(z)=A^{-1}(z)\cdots A^{-1}\left( z^{p^k}\right) F\left( z^{p^{k+1}}\right)A_0^{k+1}.$$ This is a product of matrices whose entries are meromorphic functions for a large enough $k$ so the entries of $F$ are meromorphic on $D(0,1)$.
\end{proof}

\begin{cor}\label{cor:regsing_to_cst_0}
If the Mahler system \eqref{eq:Mahler_at_0} is regular singular at $0$ and $A\in \GL_n\left(\mathbb{C}(z)\right)$ then there exist $F_0\in \GL_n\left(\mathcal{M}\left(D(0,1)\right)\right)$ and $A_0\in \GL_n\left(\mathbb{C}\right)$ such that
$$\puip\left(F_0\right)^{-1}AF_0=A_0.$$
\end{cor}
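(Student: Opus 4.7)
The plan is to reduce to the Fuchsian case already treated in Theorem~\ref{thm:fuchs_to_cst_0} and then propagate meromorphicity from a small neighborhood of $0$ to the whole open unit disc by iterating the functional equation.

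First, by definition of regular singular at $0$, there exists $T\in GL_n(\mathbb{C}(\{z\}))$ such that $B:=\puip(T)^{-1}AT\in GL_n(\mathbb{C}(\{z\}))$ is Fuchsian at $0$. I then apply Theorem~\ref{thm:fuchs_to_cst_0} to the Fuchsian system $\puip(Y)=BY$; this yields $F\in GL_n(\mathbb{C}\{z\})$ with $F(0)=I_n$ and $\puip(F)^{-1}BF=B(0)$. Setting $A_0:=B(0)\in GL_n(\mathbb{C})$ and $F_0:=TF$, a direct computation gives
$$\puip(F_0)^{-1}AF_0=\puip(F)^{-1}\bigl(\puip(T)^{-1}AT\bigr)F=\puip(F)^{-1}BF=A_0.$$
So far, $F_0$ is only known to be invertible with meromorphic entries on some (possibly small) neighborhood of~$0$, since this is all that the existence of $T$ guarantees.

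To upgrade this to $F_0\in GL_n(\mathcal{M}(D(0,1)))$, I rewrite the relation $\puip(F_0)A_0=AF_0$ as
$$F_0(z)=A(z)^{-1}F_0(z^p)A_0$$
and iterate $k+1$ times:
$$F_0(z)=A(z)^{-1}A(z^p)^{-1}\cdots A(z^{p^k})^{-1}\,F_0(z^{p^{k+1}})\,A_0^{k+1}.$$
Given $z\in D(0,1)$, I choose $k$ large enough that $z^{p^{k+1}}$ lies in the small neighborhood of $0$ where $F_0$ is already defined. Because $A\in GL_n(\mathbb{C}(z))$, each factor $A(z^{p^j})^{-1}$ is meromorphic on all of $\mathbb{C}$, so the right-hand side provides a meromorphic expression for $F_0$ near $z$. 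These local expressions are forced to agree on overlaps (by analytic continuation from a neighborhood of $0$), so they glue into an element of $\mathcal{M}_n(D(0,1))$. Invertibility follows because $\det F_0\not\equiv 0$ in a neighborhood of $0$ (as $T$ and $F$ are invertible there), and this extends throughout $D(0,1)$ by analytic continuation.

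The only nontrivial step is the meromorphic extension from a small neighborhood of $0$ to the whole disc $D(0,1)$; this is the same mechanism used at the end of the proof of Theorem~\ref{thm:fuchs_to_cst_0}, and it crucially relies on $A$ being rational rather than merely meromorphic near $0$, so that the factors $A(z^{p^j})^{-1}$ appearing in the iterated formula are globally meromorphic on $\mathbb{C}$.
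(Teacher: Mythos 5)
Your proof is correct and follows the route the paper intends: the corollary is stated without proof precisely because it is the composition of the gauge transformation $T$ from the definition of regular singularity with the matrix $F$ of Theorem~\ref{thm:fuchs_to_cst_0}, and your extension of $F_0=TF$ to all of $D(0,1)$ via the iterated relation $F_0(z)=A(z)^{-1}\cdots A\left(z^{p^k}\right)^{-1}F_0\left(z^{p^{k+1}}\right)A_0^{k+1}$ is exactly the mechanism used at the end of the proof of that theorem. You were also right to flag that the ``moreover'' clause of the theorem cannot be applied directly to the Fuchsian system $B$ (which need not be rational), which is why the global meromorphy argument has to be redone for $F_0$ itself.
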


We highlight that we have $F_0\in \GL_n\left(\mathcal{M}\left(D(0,1)\right)\right)$ and, generally, the unit circle is a natural boundary because, from a result of Rand\'e in \cite[Theorems 4.2, 4.3]{Ran92} (see also \cite{BCR13}), a solution of the Mahler equation \eqref{eq:Mahler} is rational or it has the unit circle as natural boundary. 

\begin{rem}
In the category $\mathcal{E}^{(0)}$, the system \eqref{eq:Mahler_at_0} is regular singular at $0$ if and only if it is isomorphic to a Mahler system with constant entries. 
\end{rem}

\begin{rem}
If the system \eqref{eq:Mahler_at_0} is regular singular at $0$, we can reduce the study of its solutions to the study of the solutions of a Mahler system with constant entries. Indeed, $Z$ is a solution of $\puip\left(Y \right)=A_0Y$ if and only if $F_0Z$ is a solution of $\puip\left(Y\right)=AY$. Let us explain how to solve a constant Mahler system 
\begin{equation}
\label{eq:Mahler_syst_cst}
\puip(Y) = CY, \quad C\in \GL_n\left(\mathbb{C}\right).
\end{equation}
For details, see \cite[Section 5.2]{Roq18}. We consider functions $\ell$, $e_c$, $c\in\mathbb{C}^\star$ satisfying 
$$
\puip\left(\ell\right)=\ell +1 \quad \mbox{and}\quad \puip\left(e_c\right)=ce_c.
$$
For example, we can take $\ell(z)=\log\log(z)/\log(p)$ and $e_c(z)=\log(z)^{\log(c)/\log(p)}$. Let $C=C_uC_s$ be the multiplicative Dunford decomposition of $C$, that is, $C_u$ (respectively $C_s$) is unipotent (respectively semisimple) and the matrices $C_u$, $C_s$ commute. Let $P\in \GL_n\left(\mathbb{C}\right)$ be such that $P^{-1}C_sP:=D=\mbox{diag}\left(d_1,\ldots,d_n\right)$ is diagonal where we denote by $d_1,\ldots,d_n$ the entries of $D$ on the main diagonal. We write
$$
e_{C_u}=C_u^{\ell}:=\exp\left(\ell\log(C_u)\right)\quad \mbox{and} \quad e_{C_s}=P\mbox{diag}\left(e_{d_1},\ldots,e_{d_n}\right)P^{-1},
$$  
they satisfy $\puip\left(e_{C_u}\right)=C_ue_{C_u}$ and $\puip\left(e_{C_s}\right)=C_se_{C_s}$. We set $e_C:=e_{C_u}e_{C_s}$. Using commutativity conditions, we have 
$$
\puip\left(e_C\right)=Ce_C.
$$
The matrix $e_C$ is a fundamental matrix of solutions of \eqref{eq:Mahler_syst_cst}.
\end{rem}

\subsubsection{Neighbourhood of the point \texorpdfstring{$\infty$}{infinity}}\leavevmode\par
We consider the Mahler system 
\begin{equation}\label{eq:Mahler_at_infty}
\puip(Y)=AY
\end{equation}
with $A\in \GL_n\left(\mathbb{C}\left(\left\lbrace\frac{1}{z} \right\rbrace\right)\right)$. 
 
\begin{defin}
The system \eqref{eq:Mahler_at_infty} is \emph{strictly Fuchsian at $\infty$} if $A$ is regular at $\infty$.
\end{defin}
\begin{defin}
The system \eqref{eq:Mahler_at_infty} is \emph{regular singular at $\infty$} if it is meromorphically equivalent at $\infty$, that is via a gauge transformation $T\in \GL_n\left(\mathbb{C}\left(\left\lbrace\frac{1}{z} \right\rbrace\right)\right)$, to a strictly Fuchsian one. 
\end{defin}

Using the change of variables $z\mapsto 1/z$, we obtain results which are similar to those of Section \ref{sec:neighbourhood_0}. In particular, Corollary \ref{cor:regsing_to_cst_0} gives the following result.

\begin{cor}\label{cor:regsing_to_cst_infty}
If the Mahler system \eqref{eq:Mahler_at_infty} is regular singular at $\infty$ and $A\in \GL_n\left(\mathbb{C}(z)\right)$ then there exist $F_\infty\in \GL_n\left(\mathcal{M}\left(\mathbb{P}^1\left(\mathbb{C}\right)\setminus \overline{D}(0,1)\right)\right)$ and $A_\infty\in \GL_n\left(\mathbb{C}\right)$ such that
$$\puip\left(F_\infty\right)^{-1}AF_\infty=A_\infty.$$ 
\end{cor}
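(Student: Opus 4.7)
The plan is to deduce Corollary \ref{cor:regsing_to_cst_infty} from Corollary \ref{cor:regsing_to_cst_0} by the change of variables $z \mapsto 1/z$, exactly as the preceding theorem (the version of Theorem \ref{thm:fuchs_to_cst_0} at $\infty$) was obtained from its counterpart at $0$. This is the same mechanism the author has already invoked just before, so the proof essentially amounts to unwinding the correspondence between ``at $0$'' and ``at $\infty$'' objects and then appealing to the corollary already established at $0$.

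More concretely, I would set $w := 1/z$ and $\widetilde{A}(w) := A(1/w)$. Since $A \in GL_n(\mathbb{C}(z))$ we immediately get $\widetilde{A} \in GL_n(\mathbb{C}(w))$. A direct computation shows that the substitution $\widetilde{Y}(w) := Y(1/w)$ turns the identity $Y(z^p) = A(z)Y(z)$ into $\widetilde{Y}(w^p) = \widetilde{A}(w)\widetilde{Y}(w)$, i.e.\ the Mahler operator $\puip$ commutes with the change of variables. Consequently, being Fuchsian (resp.\ regular singular) at $\infty$ for the original system is equivalent, by the very definitions recalled in this subsection, to being Fuchsian (resp.\ regular singular) at $0$ for the transformed system $\puip(\widetilde{Y}) = \widetilde{A}\,\widetilde{Y}$: a gauge transformation $T \in GL_n(\mathbb{C}(\{1/z\}))$ for the original corresponds to $\widetilde{T}(w) := T(1/w) \in GL_n(\mathbb{C}(\{w\}))$ for the transformed system.

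Applying Corollary \ref{cor:regsing_to_cst_0} to $\puip(\widetilde{Y}) = \widetilde{A}\,\widetilde{Y}$, which is now regular singular at $0$ with $\widetilde{A} \in GL_n(\mathbb{C}(w))$, yields $\widetilde{F}_0 \in GL_n(\mathcal{M}(D(0,1)))$ and $\widetilde{A}_0 \in GL_n(\mathbb{C})$ with $\puip(\widetilde{F}_0)^{-1}\widetilde{A}\,\widetilde{F}_0 = \widetilde{A}_0$. Setting $F_\infty(z) := \widetilde{F}_0(1/z)$ and $A_\infty := \widetilde{A}_0$, the function $F_\infty$ is meromorphic on $\{|z|>1\} = \mathbb{P}^1(\mathbb{C}) \setminus \overline{D}(0,1)$ with invertible determinant there (since $\det \widetilde{F}_0$ is a nonzero meromorphic function on $D(0,1)$), and the functional equation satisfied by $\widetilde{F}_0$ translates into $\puip(F_\infty)^{-1} A F_\infty = A_\infty$, which is the desired conclusion.

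There is no real obstacle here: the entire content is that the previous corollary has already been proved at $0$, and that the change of variables $z \leftrightarrow 1/z$ respects $\puip$, the notion of regularity at the appropriate point, meromorphic equivalence, and the ambient ring $\mathbb{C}(z)$. The only things one must check are the few lines showing that these notions transport correctly — a task performed implicitly throughout this section.
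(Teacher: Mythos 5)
Your proposal is correct and follows essentially the same route as the paper, which obtains the statement at $\infty$ from the corresponding statement at $0$ via the change of variables $z\mapsto 1/z$ (the paper transports the theorem and then deduces the corollary, whereas you transport the corollary directly, but the mechanism — checking that $\puip$, regularity, meromorphic equivalence and rationality are preserved under $w=1/z$ — is identical). No gap.
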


\subsubsection{Neighbourhood of the point \texorpdfstring{$1$}{1}} \leavevmode\par

We shall now study the regular singular Mahler equations at the point $1$. We consider the Mahler system 
\begin{equation}\label{eq:Mahler_at_1}
\puip\left(Y\right)=AY
\end{equation}
with $A\in \GL_n\left(\mathbb{C}\left(\lbrace z-1\rbrace \right)\right).$
\begin{defin}
The system \eqref{eq:Mahler_at_1} is \emph{strictly Fuchsian at $1$} if $A$ is regular at $1$.
\end{defin}

\begin{defin}
The system \eqref{eq:Mahler_at_1} is \emph{regular singular at $1$} if it is meromorphically equivalent at $1$, that is via a gauge transformation $T\in \GL_n\left(\mathbb{C}\left(\lbrace z-1 \right\rbrace\right)$, to a strictly Fuchsian one. 
\end{defin}
 
\begin{rem}
Two systems of the form \eqref{eq:Mahler_at_1} are meromorphically equivalent at $1$ if and only if they are isomorphic in the category $\mathcal{E}^{(1)}$.
\end{rem}

In order to study the system \eqref{eq:Mahler_at_1}, we transform it into the $q$-difference system 
\begin{equation}\label{eq:q_diff}
X(pu)=B(u)X(u)\quad\mbox{where}\quad B(u)=A\left(\exp(u)\right)
\end{equation}
with the change of variables $z=\exp(u)$.
Locally, to transform the $q$-difference system \eqref{eq:q_diff} into the Mahler system \eqref{eq:Mahler_at_1}, we can use the principal value $\loga$ because it is biholomorphic between a neighbourhood of $1$ and a neighbourhood of $0$. However, for global considerations, to go back to our initial system, we use the following logarithm which is holomorphic on the universal cover $\revc:=\left\lbrace\left(re^{ib},b\right)\mid r>0, b\in\mathbb{R}\right\rbrace$ of $\mathbb{C}^\star$:
$$
\begin{array}{rccl}
\wln: & \revc & \rightarrow & \mathbb{C}\\
	  & \left(re^{ib},b\right) & \mapsto & \log(r)+ib.
\end{array}
$$
It is moreover a biholomorphic function.

\begin{notat}
We define $$\pi:=\exp\circ\wln$$ and if $W$ is a matrix with meromorphic entries on $V\subset\mathbb{C}$, for all $\wz$ such that $\pi\left(\wz\right)\in V$,
$$\pi^\star W\left(\wz\right):=W\left(\pi\left(\wz\right)\right).$$
With an abuse of notation, we will also denote by $\puip$ the function 
$$
\begin{array}{rccl}
\puip: & \revc & \rightarrow & \revc\\
        & \left(re^{ib},b\right) & \mapsto & \left(re^{ib},b\right)^p
\end{array}
$$
where $\left(re^{ib},b\right)^p:=\left(r^pe^{ipb},pb\right)$.
\end{notat}

We notice that $$p\wln=\wln\circ\puip.$$
From \cite[Section 1.1]{Sau00}, we have the following lemma.

\begin{lem}
Let $B\in \GL_n\left(\mathcal{M}\left(\mathbb{C}\right)\right)$ be regular at $0$.
There exist $G\in \GL_n\left(\mathcal{M}\left(\mathbb{C}\right)\right)$ and $C_0\in \GL_n\left(\mathbb{C}\right)$ such that $$G(pz)^{-1}B(z)G(z)=C_0.$$
\end{lem}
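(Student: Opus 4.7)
The plan is to split the argument into a local analytic construction of $G$ near $u=0$, modelled on the proof of Theorem \ref{thm:fuchs_to_cst_0}, and a meromorphic continuation of $G$ to all of $\mathbb{C}$ via the functional equation $G(pu)=B(u)G(u)C_0^{-1}$.

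For the local step, I would write $B(u)=B_0+\sum_{k\geq 1}B_k u^k$ with $B_0\in GL_n(\mathbb{C})$ and look for $C_0=B_0$ together with a formal series $G(u)=I_n+\sum_{k\geq 1}G_k u^k$ satisfying $G(pu)=B(u)G(u)C_0^{-1}$. Identifying coefficients in $u^k$ produces, for $k\geq 1$, the Sylvester-type equation
\[
p^k G_k B_0 - B_0 G_k = \sum_{j=1}^{k} B_j G_{k-j}.
\]
The linear map $X\mapsto p^k X B_0 - B_0 X$ on $\mathcal{M}_n(\mathbb{C})$ is invertible except when $p^k$ belongs to the finite set $\{\lambda/\mu : \lambda,\mu\in\mathrm{Sp}(B_0)\}$, so only finitely many indices $k$ are obstructed. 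To bypass these resonant indices, I would apply a preliminary \emph{shearing transformation}, namely a meromorphic gauge change by a matrix of the form $\mathrm{diag}(u^{n_1},\ldots,u^{n_r})$ written in a basis adapted to the partition of $\mathrm{Sp}(B_0)$ into classes modulo $p^{\mathbb{Z}}$; finitely many such shearings yield an equivalent system whose new constant term $B_0$ is non-resonant. In that situation the recursion above determines $G$ unambiguously, and convergence of the resulting power series on some disc $D(0,r)$ follows from the same geometric majorant estimate used in the proof of Theorem \ref{thm:fuchs_to_cst_0}.

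For the global step, once $G$ is holomorphic and invertible on $D(0,r)$, I would rewrite the functional equation as $G(u)=B(u/p)G(u/p)C_0^{-1}$. Because $u\in D(0,pr)$ forces $u/p\in D(0,r)$, and because $B$ is meromorphic on $\mathbb{C}$, this formula extends $G$ meromorphically to $D(0,pr)$. Iterating the extension covers the whole plane and produces $G\in GL_n(\mathcal{M}(\mathbb{C}))$; invertibility as a meromorphic matrix is guaranteed by $\det G(0)=1$.

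The main obstacle I would expect is the handling of resonances through the shearing procedure: one has to verify that, after a suitable constant change of basis compatible with the $p^{\mathbb{Z}}$-equivalence classes of the spectrum of $B_0$, a finite succession of explicit shearings replaces every resonant eigenvalue ratio by a non-resonant one while keeping the gauge matrix in $GL_n(\mathcal{M}(\mathbb{C}))$. Once this bookkeeping is in place, the convergence step becomes routine and the meromorphic extension to $\mathbb{C}$ is essentially automatic from the functional equation, so no further analytic work should be required.
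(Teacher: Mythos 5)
The paper does not actually prove this lemma itself but quotes it from \cite[I.1.3.1]{thesesauloy}, and your argument is exactly the one used there: the non-resonant coefficient recursion $p^kG_kB_0-B_0G_k=\sum_{j=1}^kB_jG_{k-j}$ with a majorant-series convergence estimate (using that $\left\| \left(X\mapsto p^kXB_0-B_0X\right)^{-1}\right\|$ stays bounded as $k\to\infty$), reduction of the finitely many resonances by shearing, and meromorphic continuation of $G$ to all of $\mathbb{C}$ through $G(u)=B(u/p)\,G(u/p)\,C_0^{-1}$. Your proof is correct; the one point to keep under control in the shearing step is that the exponents $n_i$ must be chosen compatibly with a block form of $B_0$ in which the off-diagonal blocks vanish at $0$, so that the transformed blocks $p^{-n_i}u^{\,n_j-n_i}B_{ij}(u)$ remain regular there and the new system is still Fuchsian at $0$ — which is precisely the bookkeeping you already flag.
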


Let us illustrate this result with an example.
\begin{ex}
\label{ex:qdiff,Mahler}
We consider the $q$-difference system 
\begin{equation}
\label{eq:qdiffceSystem}
X(pz) = B(z)X(z)\quad \mbox{with}\quad B(z)= \begin{pmatrix}
p & \exp\left(2z\right) \\
0 & \exp(z)
\end{pmatrix}.
\end{equation}
The matrix $B$ is regular at $0$. From \cite[Section 1.1]{Sau00}, to construct $G\in \GL_n\left(\mathcal{M}\left(\mathbb{C}\right)\right)$ and $C_0\in \GL_n\left(\mathbb{C}\right)$ such that $G(pz)^{-1}B(z)G(z)=C_0$, there are two steps which are similar to the Frobenius method for ordinary differential equations:
\begin{itemize}
\item We transform the system \eqref{eq:qdiffceSystem} into a non-resonant system at the origin, denoted by $Y(pz) = C(z)Y(z)$, that is a system such that $C$ is regular at $0$ and two distinct eigenvalues of $C(0)$ are not congruent modulo $p^\mathbb{Z}$. It is done using invertible matrices with constant entries $Q_j$ and `` shearing '' matrices $S_k$, that is matrices of the form
$$ S_k=\begin{pmatrix}
zI_l & 0\\ 0 & I_m
\end{pmatrix}.$$
More precisely, $X$ is written as $X=\underbrace{Q_1S_1...Q_rS_r}_{:=M}Y$ and $C(z)=M(pz)^{-1}B(z)M(z)$. Moreover, by construction, the matrix $C(0)$ has the same eigenvalues of $B(0)$ modulo $p^\mathbb{Z}$. 
In our example, since the eigenvalues of $B(0)$ are $1$ and $p$, the system \eqref{eq:qdiffceSystem} is resonant at the origin. We can take 
$$Q=\begin{pmatrix}
1 & 1/(1-p)\\
0 & 1
\end{pmatrix} \quad \mbox{and}\quad S=\begin{pmatrix}
z & 0\\ 0 & 1
\end{pmatrix}.$$ 
They satisfy $Q^{-1}B(0)Q = \mbox{diag}\left(p, 1\right)$ and if $M:= QS$, 
$$
C(z):= \left(M(pz)\right)^{-1}B(z)M(z) = \begin{pmatrix}
1 & g(z) \\ 0 & \exp(z)
\end{pmatrix} 
$$
where the function $g(z)=\frac{\exp(z)-p+(p-1)\exp(2z)}{p(p-1)z}$ is holomorphic on $\mathbb{C}$.
The system $Y(pz) = C(z)Y(z)$ is non-resonant at the origin. 
\item  There exists a unique matrix $H\in \GL_n\left(\mathcal{M}\left(\mathbb{C}\right)\right)$ such that $H(0)=I_n$ and $H(pz)^{-1}C(z)H(z)=C(0)$. The coefficients of the power series expansion of $H$ are determined thanks to a recurrence relation.
\end{itemize} 
The matrices $G:= MH$ and $C_0:= C(0)$ satisfy the required conditions.  
\end{ex}

\begin{thm}\label{thm:fuchs_to_cst_1}
Assume that the system \eqref{eq:Mahler_at_1} is strictly Fuchsian at $1$ and $A\in \GL_n\left(\mathbb{C}(z)\right)$. There exist 
$\wF_1\in \GL_n\left(\mathcal{M}\left(\revc\right)\right)$ and $C_0\in \GL_n\left(\mathbb{C}\right)$ such that
\begin{equation}
 \puip\left(\wF_1\right)^{-1}\left(\pi^\star A\right)\wF_1=C_0. 
\end{equation} 
\end{thm}

\begin{proof}
We transform the system \eqref{eq:Mahler_at_1} into the $q$-difference system \eqref{eq:q_diff}. From the previous lemma, we can take $\wF_1:=G\circ\wln$.
\end{proof}

\begin{ex}
We consider the Mahler system 
\begin{equation}
\puip(Y) = AY\quad \mbox{with}\quad A(z)= \begin{pmatrix}
p & z^2 \\
0 & z
\end{pmatrix},
\end{equation}
it is strictly Fuchsian at $1$. Using the notations of Example \ref{ex:qdiff,Mahler}, since $\pi^\star A = B\circ\wln$, the matrix $\wF_1:=G\circ\wln$ satisfies 
$\puip\left(\wF_1\right)^{-1}\left(\pi^\star A\right)\wF_1=C_0$.
\end{ex}

\begin{cor}\label{cor:regsing_to_cst_1}
If the Mahler system \eqref{eq:Mahler_at_1} is regular singular at $1$ and $A\in \GL_n\left(\mathbb{C}(z)\right)$ then there exist $\wF_1\in \GL_n\left(\mathcal{M}\left(\revc\right)\right)$ and $A_1\in \GL_n\left(\mathbb{C}\right)$ such that
$$\puip\left(\wF_1\right)^{-1}\left(\pi^\star A\right)\wF_1=A_1.$$ 
\end{cor}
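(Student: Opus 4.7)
The plan is to reduce to Theorem \ref{thm:fuchs_to_cst_1} by combining the definition of regular singular at $1$ with a passage to the associated $q$-difference system, where the analogous result is due to Sauloy.

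By definition of regular singular at $1$, there exists $T \in GL_n(\mathbb{C}(\lbrace z-1\rbrace))$ such that $B := \puip(T)^{-1} A T$ is Fuchsian at $1$, i.e.\ regular at $1$. Note that $B$ is only locally meromorphic near $1$, not rational, so one cannot apply Theorem \ref{thm:fuchs_to_cst_1} to $B$ directly. To circumvent this, I would work at the level of the associated $q$-difference system. Under the change of variables $z = \exp(u)$, the Mahler system \eqref{eq:Mahler_at_1} becomes the $q$-difference system $X(pu) = A(e^u) X(u)$. Since $A \in GL_n(\mathbb{C}(z))$, the matrix $A(e^u)$ lies in $GL_n(\mathcal{M}(\mathbb{C}))$. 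The local gauge $S(u) := T(e^u)$ is meromorphic in a neighborhood $V$ of $0$ and satisfies $S(pu)^{-1} A(e^u) S(u) = B(e^u)$, which is regular at $u = 0$. Thus the $q$-difference system is regular singular at $0$ in the sense of Sauloy.

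The next step is to apply the regular singular analogue of the lemma quoted before Theorem \ref{thm:fuchs_to_cst_1} (see \cite{thesesauloy}) to the $q$-difference matrix $A(e^u)$. This yields $G \in GL_n(\mathcal{M}(\mathbb{C}))$ and $A_1 \in GL_n(\mathbb{C})$ such that
\[
G(pu)^{-1} A(e^u) G(u) = A_1.
\]
Setting $\wF_1 := G \circ \wln$ and using the identity $p \wln = \wln \circ \puip$, one obtains $\puip(\wF_1)^{-1} (\pi^\star A) \wF_1 = A_1$. Because $\wln$ is a biholomorphism from $\revc$ onto $\mathbb{C}$ and $G \in GL_n(\mathcal{M}(\mathbb{C}))$, the matrix $\wF_1$ belongs to $GL_n(\mathcal{M}(\revc))$, as required.

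The main obstacle is the regular singular version of Sauloy's lemma, which is not stated explicitly in the excerpt. I would establish it as follows: first apply the Fuchsian lemma preceding Theorem \ref{thm:fuchs_to_cst_1} to the regular matrix $B(e^u)$ in a neighborhood of $0$ to produce, after composing with $S$, a local meromorphic $G_0$ near $0$ satisfying $G_0(pu)^{-1} A(e^u) G_0(u) = A_1$; then propagate $G_0$ to all of $\mathbb{C}$ via the functional equation $G_0(pu) = A(e^u) G_0(u) A_1^{-1}$. Since $u \mapsto pu$ with $p \geq 2$ exhausts $\mathbb{C}$ by iteration and $A(e^u) \in GL_n(\mathcal{M}(\mathbb{C}))$, this yields a global meromorphic extension $G \in GL_n(\mathcal{M}(\mathbb{C}))$, completing the argument.
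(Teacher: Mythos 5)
Your argument is correct and follows essentially the route the paper takes for Theorem \ref{thm:fuchs_to_cst_1}: pass to the associated $q$-difference system via $z=\exp(u)$, reduce to a constant system using Sauloy's results, and set $\wF_1:=G\circ\wln$. Your extra propagation step --- extending the local gauge from a neighbourhood of $0$ to all of $\mathbb{C}$ through $G(pu)=A\left(e^u\right)G(u)A_1^{-1}$, which works because $A$ is rational (so $A\left(e^u\right)\in GL_n\left(\mathcal{M}\left(\mathbb{C}\right)\right)$) and the dilates $p^kD(0,r)$ exhaust $\mathbb{C}$ --- correctly supplies the one detail the paper leaves implicit, namely that the Fuchsian system furnished by the definition of regular singularity at $1$ is only locally meromorphic there.
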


\begin{rem}
In the category $\mathcal{E}^{(1)}$, the system \eqref{eq:Mahler_at_1} is regular singular at $1$ if and only if it is isomorphic to a Mahler system with constant entries. 
\end{rem}

\subsection{Meromorphic functions on (a part of) the universal cover of \texorpdfstring{$\mathbb{C}^\star$}{C*}}\leavevmode\par
One can identify meromorphic functions on $\mathbb{C}^\star$ with meromorphic functions on the universal cover of $\mathbb{C}^\star$, that is $\revc:=\left\lbrace \left(re^{ib},b\right)\mid r\in\mathbb{R}_+^{\star}, b\in\mathbb{R}\right\rbrace$, via
$$
\begin{array}{rccl}
i : & \mathcal{M}\left(\mathbb{C}^\star\right) & \hookrightarrow & \mathcal{M}\left(\revc\right)\\
      & f & \mapsto & f\circ\pi.
\end{array}
$$
We write
$$\pi^\star\mathcal{M}\left(\mathbb{C}^\star\right):=\left\lbrace f\circ\pi \mid f\in\mathcal{M}\left(\mathbb{C}^\star\right)\right\rbrace$$
the image of $i$, it consists of the $g\in\mathcal{M}\left(\revc\right)$ which are $2\pi$-invariant in $b$. This identification between $\mathcal{M}\left(\mathbb{C}^\star\right)$ and $\pi^\star\mathcal{M}\left(\mathbb{C}^\star\right)$ is compatible with $z\mapsto z^p$, that is, there is the following commutative diagram:

\begin{equation}
\xymatrix{\mathcal{M}\left(\mathbb{C}^\star\right) \ar[rr]^{f(z)\mapsto f\left(z^p\right)} \ar@{^{(}->}[d] & & \mathcal{M}\left(\mathbb{C}^\star\right) \ar@{^{(}->}[d]\\
\mathcal{M}\left(\revc\right)\ar[rr]_{f\left(\wz\right)\mapsto f\left(\wz^p\right)} & & \mathcal{M}\left(\revc\right)}
\end{equation}
that is,
$$\puip\circ\pi=\pi\circ\puip.$$
We also write 
$$\pi^\star\mathcal{M}\left(D(0,1)\right):=\lbrace f\circ\pi \mid f\in\mathcal{M}\left(D(0,1)\right) \rbrace$$
and
$$\pi^\star\mathcal{M}\left(\mathbb{P}^1\left(\mathbb{C}\right)\setminus\overline{D}(0,1)\right):=\left\lbrace f\circ\pi \mid f\in\mathcal{M}\left(\mathbb{P}^1\left(\mathbb{C}\right)\setminus\overline{D}(0,1)\right)\right\rbrace.$$

We can check that we have:
\begin{lem}~\
\begin{itemize}
\item[(i)] If $h\in\pi^\star\mathcal{M}\left(D(0,1)\right)$ then $h\circ\puip\in\pi^\star\mathcal{M}\left(D(0,1)\right)$.
\item[(ii)] If $h\in\pi^\star\mathcal{M}\left(\mathbb{P}^1\left(\mathbb{C}\right)\setminus\overline{D}(0,1)\right)$ then $h\circ\puip\in\pi^\star\mathcal{M}\left(\mathbb{P}^1\left(\mathbb{C}\right)\setminus\overline{D}(0,1)\right)$.
\end{itemize}
\end{lem}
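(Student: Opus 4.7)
The plan is to reduce both statements to the observation that the map $z\mapsto z^p$ on $\mathbb{C}^\star$ stabilizes both $D(0,1)$ and $\mathbb{P}^1(\mathbb{C})\setminus\overline{D}(0,1)$, and then transport this downstairs observation up to $\revc$ via the commutation relation $\puip\circ\pi=\pi\circ\puip$ stated just above the lemma.

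More precisely, for part (1), given $h\in\pi^\star\mathcal{M}(D(0,1))$, write $h=f\circ\pi$ with $f\in\mathcal{M}(D(0,1))$. Using the commutative diagram recalled before the lemma, I would compute
\[
h\circ\puip=f\circ\pi\circ\puip=f\circ\puip\circ\pi=(f\circ\puip)\circ\pi,
\]
where on the right the symbol $\puip$ denotes the map $z\mapsto z^p$ on $\mathbb{C}^\star$. Since $|z|<1$ implies $|z^p|<1$, the map $\puip$ restricts to a holomorphic self-map of $D(0,1)$, so $f\circ\puip\in\mathcal{M}(D(0,1))$. This gives $h\circ\puip\in\pi^\star\mathcal{M}(D(0,1))$.

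For part (2), the argument is identical with $D(0,1)$ replaced by $\mathbb{P}^1(\mathbb{C})\setminus\overline{D}(0,1)$. The only point to check is that $\puip: z\mapsto z^p$ is a self-map of this open set, which follows from the fact that $|z|>1$ implies $|z^p|>1$ together with $\puip(\infty)=\infty$. Writing $h=f\circ\pi$ with $f\in\mathcal{M}\bigl(\mathbb{P}^1(\mathbb{C})\setminus\overline{D}(0,1)\bigr)$, the same chain of equalities as above yields $h\circ\puip=(f\circ\puip)\circ\pi\in\pi^\star\mathcal{M}\bigl(\mathbb{P}^1(\mathbb{C})\setminus\overline{D}(0,1)\bigr)$.

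There is no real obstacle: the content of the lemma is entirely contained in the two set-theoretic inclusions $\puip(D(0,1))\subset D(0,1)$ and $\puip(\mathbb{P}^1(\mathbb{C})\setminus\overline{D}(0,1))\subset\mathbb{P}^1(\mathbb{C})\setminus\overline{D}(0,1)$, combined with the previously displayed commutation of $\puip$ with $\pi$. The only mildly delicate point is to keep the two meanings of the symbol $\puip$ (acting on $\revc$ on the left, on $\mathbb{C}^\star$ on the right) cleanly separated, which I would do by reminding the reader of the abuse of notation before starting the computation.
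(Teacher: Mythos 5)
Your proof is correct, and it supplies exactly the routine verification the paper omits (the lemma is introduced with ``We can check that'' and no proof is given): pull the computation down to $\mathbb{C}^\star$ via $\pi\circ\puip=\puip\circ\pi$ and use that $z\mapsto z^p$ preserves $D(0,1)$ and $\mathbb{P}^1\left(\mathbb{C}\right)\setminus\overline{D}(0,1)$. Your care in distinguishing the two meanings of $\puip$ is appropriate and the argument is complete.
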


\begin{lem}\label{lem:merom_funct_inter}
We have
\begin{equation}\label{eq:merom_inter_0}
\mathcal{M}\left(\revc\right)\cap\pi^\star\mathcal{M}\left(D(0,1)\right)=\pi^\star\mathcal{M}\left(\mathbb{C}\right),
\end{equation}

\begin{equation}\label{eq:merom_inter_infty}
\mathcal{M}\left(\revc\right)\cap \pi^\star\mathcal{M}\left(\mathbb{P}^1\left(\mathbb{C}\right)\setminus\overline{D}(0,1)\right)=\pi^\star\mathcal{M}\left(\mathbb{P}^1\left(\mathbb{C}\right)\setminus\lbrace 0\rbrace\right)
\end{equation}
and
\begin{equation}\label{eq:merom_inter_0andinfty}
\mathcal{M}\left(\revc\right)\cap\pi^\star\mathcal{M}\left(D(0,1)\right)\cap \pi^\star\mathcal{M}\left(\mathbb{P}^1\left(\mathbb{C}\right)\setminus\overline{D}(0,1)\right)=\pi^\star\mathcal{M}\left(\mathbb{P}^1\left(\mathbb{C}\right)\right)=\pcz.
\end{equation}
\end{lem}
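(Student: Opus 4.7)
The plan is to prove each of the three equalities by the same descent strategy. The inclusions $\supseteq$ are immediate: a meromorphic function on $\mathbb{C}$ (resp.\ on $\mathbb{P}^1(\mathbb{C})\setminus\{0\}$, resp.\ on $\mathbb{P}^1(\mathbb{C})$) restricts to a meromorphic function on $\mathbb{C}^\star$ and on the relevant disk or complement, so its pullback by $\pi$ lies in each piece of the displayed intersection. The content lies in the reverse inclusions.

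For the nontrivial direction of \eqref{eq:merom_inter_0}, I would fix $h \in \mathcal{M}(\revc) \cap \pi^\star\mathcal{M}(D(0,1))$ and choose $f \in \mathcal{M}(D(0,1))$ with $h = f \circ \pi$ on the nonempty connected open set $U := \{(re^{ib}, b) \in \revc : 0 < r < 1\}$. The key observation is that the deck transformation $\tau : (re^{ib}, b) \mapsto (re^{i(b+2\pi)}, b+2\pi)$ is an automorphism of $\revc$ with $\pi \circ \tau = \pi$, so $h \circ \tau = h$ on $U$. The meromorphic function $h - h \circ \tau$ on the connected Riemann surface $\revc$ therefore vanishes on a nonempty open subset, and by the identity principle it vanishes identically. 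Hence $h$ is $\tau$-invariant on the whole of $\revc$, descends to a function $g \in \mathcal{M}(\mathbb{C}^\star)$ with $h = \pi^\star g$, and since $g$ coincides with $f$ on $D(0,1)\setminus\{0\}$ while $f$ is meromorphic at $0$, the function $g$ extends to an element of $\mathcal{M}(\mathbb{C})$, so $h \in \pi^\star\mathcal{M}(\mathbb{C})$.

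The equality \eqref{eq:merom_inter_infty} follows by exactly the same argument, with $D(0,1)$ replaced by $\mathbb{P}^1(\mathbb{C})\setminus\overline{D}(0,1)$, the set $U$ replaced by $\{(re^{ib}, b) \in \revc : r > 1\}$, and the meromorphic extension performed at $\infty$ rather than at $0$. For \eqref{eq:merom_inter_0andinfty}, I would simply apply the two preceding equalities to an element $h$ of the triple intersection: this yields $h = \pi^\star g$ with $g$ meromorphic both on $\mathbb{C}$ and on $\mathbb{P}^1(\mathbb{C}) \setminus \{0\}$, hence on the whole of $\mathbb{P}^1(\mathbb{C})$; since $\mathcal{M}(\mathbb{P}^1(\mathbb{C})) = \mathbb{C}(z)$, we get $h \in \pcz$.

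The only mildly delicate point throughout is the use of the identity principle for meromorphic (rather than merely holomorphic) functions on the connected Riemann surface $\revc$; this is standard, since the poles of $h - h\circ\tau$ form a closed discrete subset of $\revc$ and the function vanishes on an open subset of the complement, forcing it to be identically zero. No step seems to present a real obstacle beyond carefully tracking the domains and the identification of $\revc$ with its standard coordinate description.
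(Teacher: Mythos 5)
Your proof is correct and follows essentially the same route as the paper: the paper transports the problem to $\mathbb{C}$ via the biholomorphism $\wln$ and descends through $\exp$ using its periodicity, which is exactly your deck-transformation argument read in the coordinate $u=\wln(\wz)$. If anything, you are more explicit than the paper about the one genuinely analytic step, namely that the identity principle propagates the relation $h\circ\tau=h$ (equivalently, the $2\pi i$-periodicity of $h\circ\wln^{-1}$) from $\sigz$ to all of $\revc$.
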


\begin{proof}
Let us prove \eqref{eq:merom_inter_0}. It is clear that 
$$\pi^\star\mathcal{M}\left(\mathbb{C}\right)\subset\mathcal{M}\left(\revc\right)\cap\pi^\star\mathcal{M}\left(D(0,1)\right).$$
It remains to  show the reciprocal inclusion. Let $f\in\mathcal{M}\left(\revc\right)\cap\pi^\star\mathcal{M}\left(D(0,1)\right)$. There exists $g\in\mathcal{M}\left(D(0,1)\right)$ such that $f=\pi^\star g=g\circ\pi$. Since $f=g\circ\pi=g\circ\exp\circ\wln\in\mathcal{M}\left(\revc\right)$ and $\wln: \revc\rightarrow\mathbb{C}$ is a biholomorphic function then $g\circ\exp\in\mathcal{M}\left(\mathbb{C}\right)$. Using that $\exp: \mathbb{C}\rightarrow\mathbb{C}^\star$ is a surjective holomorphic function, we deduce that $g\in\mathcal{M}\left(\mathbb{C}^\star\right)$. Therefore, $f=g\circ\pi$ with 
$$g\in\mathcal{M}\left(D(0,1)\right)\cap\mathcal{M}\left(\mathbb{C}^\star\right)=\mathcal{M}\left(\mathbb{C}\right),$$ this proves the equality \eqref{eq:merom_inter_0}.

Similarly, we can prove the equality \eqref{eq:merom_inter_infty}. The equality \eqref{eq:merom_inter_0andinfty} is a consequence of \eqref{eq:merom_inter_0}, \eqref{eq:merom_inter_infty} and the fact that
$$\mathcal{M}\left(\mathbb{P}^1\left(\mathbb{C}\right)\right)=\mathbb{C}(z).$$
\end{proof}

\section{The Categories of Strictly Fuchsian and Regular Singular Systems at \texorpdfstring{$0,1$}{0,1} and \texorpdfstring{$\infty$}{∞}}\label{part2}

\subsection{Presentation of the categories}\label{sec:catMahler_E_rs}\leavevmode\par
We denote by $\mathcal{E}_{rs}$ the \emph{category of regular singular Mahler systems at $0,1$ and $\infty$}. It is the full subcategory of $\mathcal{E}$ (introduced in Section \ref{sec:catMahler_E,E^i}) whose objects are the  
$$\left(\mathbb{C}(z)^n,A\right)$$
where $A\in \GL_n(\mathbb{C}(z))$ is such that the system $\puip\left(Y\right)=AY$ is regular singular at $0$, $1$ and $\infty$. 

In what follows, to simplify the notations, we will omit $\mathbb{C}(z)^n$ in the definition of the objects of $\mathcal{E}_{rs}$.

We denote by $\mathcal{E}_{sf}$ the \emph{category of strictly Fuchsian Mahler systems at $0,1$ and $\infty$.}  
It is the full subcategory of $\mathcal{E}_{rs}$ whose objects are the matrices $A\in \GL_n(\mathbb{C}(z))$ which are regular at $0,1$ and $\infty$.

We will prove that the categories $\mathcal{E}_{rs}$ and $\mathcal{E}_{sf}$ are equivalent. This means that the inclusion $\mathcal{E}_{sf}\rightarrow\mathcal{E}_{rs}$ is essentially surjective.

\subsection{The categories \texorpdfstring{$\mathcal{E}_{rs}$}{Ers} and \texorpdfstring{$\mathcal{E}_{sf}$}{Esf} are equivalent}\label{sec:equivE_rs/E_f}

\begin{notat}\label{notat:mat_D,T}
We denote by $D_{i,u}$ the identity matrix whose entry in column $i$ and row $i$ is replaced by $u$. We denote by $T_{i,\underline{l}}$ where $\underline{l}=(l_1,\ldots,l_n)$ the identity matrix whose row $i$ is replaced by $\underline{l}$:
\begin{center}
$D_{i,u}=\begin{pmatrix}
1 & & & & & & \\
  &\ddots & & & & & \\
  &       & 1 & & & & \\
  &       &   & u & & & \\
  &       &   &   & 1 & & \\
  &       &   &   &   & \ddots & \\
  &       &   &   &   &        & 1  
\end{pmatrix}$ et 
$T_{i,\underline{l}}=\begin{pmatrix}
1 & & & & & & \\
  &\ddots & & & & & \\
  &       & 1 & & & & \\
 l_1 &  l_2     &  \cdots & l_i & \cdots & l_{n-1} &l_n \\
  &       &   &   & 1 & & \\
  &       &   &   &   & \ddots & \\
  &       &   &   &   &        & 1  
\end{pmatrix}$.
\end{center}
We notice that the inverse matrices of these matrices have the same form.
\end{notat}

From \cite[Section 1.3.1, Corollaire 1]{Sau00}, we have the following result.

\begin{lem}\label{lem:decomp_C_0*R_0}
If $M_0\in \GL_n\left(\mathbb{C}\left(\lbrace z\rbrace \right)\right)$ then $M_0$ can be written as
$$M_0=C^{(0)}R^{(0)}$$ with
\begin{itemize}
\item $C^{(0)}\in \GL_n\left(\mathbb{C}(z)\right)$ which can be written as
$$C^{(0)}=u^{-k}T_{i_1,\underline{l_1}}D_{i_1,v_1}\cdots T_{i_r,\underline{l_r}}D_{i_r,v_r} $$ where $u,v_1,\ldots,v_r\in\mathbb{C}\left(z\right)$ have a valuation at $0$ which is equal to $1$, $k\in\mathbb{N}$ and $\underline{l_1},\ldots,\underline{l_r}\in\mathbb{C}^n\setminus\lbrace 0\rbrace\, ;$
\item $R^{(0)}\in \GL_n\left(\mathbb{C}\left(\lbrace z\rbrace \right)\right)$ regular at $0$.
\end{itemize}
\end{lem}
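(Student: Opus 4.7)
The plan is to derive the decomposition by an iterative reduction that first strips the poles of $M_0$ at $0$ and then clears the vanishing of its determinant at $0$ one unit at a time. For the first step, I would exploit that since $M_0 \in GL_n\left(\mathbb{C}\left(\lbrace z\rbrace\right)\right)$, there exists $k \in \mathbb{N}$ with $N := z^k M_0 \in \mathcal{M}_n\left(\mathbb{C}\lbrace z\rbrace\right)$. Taking $u=z$ already accounts for the factor $u^{-k}$ in front of $C^{(0)}$, so it remains to factor such an $N$ as a product of pairs $T_{i,\underline{l}}D_{i,v}$ followed by a matrix $R^{(0)} \in GL_n\left(\mathbb{C}\lbrace z\rbrace\right)$ regular at $0$.

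For the reduction step, suppose $N \in \mathcal{M}_n\left(\mathbb{C}\lbrace z\rbrace\right)$ has determinant of valuation $m \geq 1$ at $0$. Then the rows of the complex matrix $N(0)$ are linearly dependent, so there exist $\underline{l} = (l_1, \ldots, l_n) \in \mathbb{C}^n \setminus \lbrace 0\rbrace$ and an index $i$ with $l_i \neq 0$ such that $\sum_j l_j N[j,*](0) = 0$. The linear combination $\sum_j l_j N[j,*]$ is therefore divisible by $z$ in $\mathbb{C}\lbrace z\rbrace$. I would let $N'$ be obtained from $N$ by replacing row $i$ with $\tfrac{1}{z}\sum_j l_j N[j,*]$; then $N' \in \mathcal{M}_n\left(\mathbb{C}\lbrace z\rbrace\right)$ and a direct computation gives $\det N' = (l_i/z)\det N$, so the valuation at $0$ of $\det N'$ drops to $m-1$. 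Inverting this elementary row operation expresses $N = T_{i,\underline{l'}}D_{i,z}N'$ with $\underline{l'} = (-l_1/l_i, \ldots, 1/l_i, \ldots, -l_n/l_i) \in \mathbb{C}^n\setminus\lbrace 0\rbrace$ (the $i$-th component being $1/l_i \neq 0$), which is exactly the shape required.

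I would then iterate this reduction on $m$. After at most $m$ such steps the determinant of the remaining matrix becomes a unit of $\mathbb{C}\lbrace z\rbrace$, producing the desired regular invertible $R^{(0)}$, and concatenating the elementary factors yields
\[
M_0 = z^{-k}\,T_{i_1,\underline{l_1}}D_{i_1,z}\cdots T_{i_r,\underline{l_r}}D_{i_r,z}\,R^{(0)}.
\]
Taking $u=z$ and $v_j=z$ for all $j$ makes every entry of $C^{(0)}$ rational, so $C^{(0)} \in GL_n\left(\mathbb{C}(z)\right)$ as claimed. The main point to watch carefully is the alternating pattern $T_{i_j,\underline{l_j}}D_{i_j,v_j}$ with \emph{matching} indices $i_j$ on the two factors: this is precisely what the single reduction step produces (both the row operation and the division by $z$ act on the same row $i$), so the iteration automatically preserves the structure.
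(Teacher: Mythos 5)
Your proof is correct and follows essentially the same route as the source: the paper cites Sauloy's Corollaire 1 for this lemma and reproduces the identical argument when proving the analogue at $1$ (Lemma \ref{lem:decomp_C_1*R_1}), namely clearing poles with a power of the uniformizer and then inducting on the valuation of the determinant, extracting one factor $T_{i,\underline{l}}D_{i,u}$ per step via $N' = D_{i,u}^{-1}T_{i,\underline{l}}N$. Your explicit computation of $T_{i,\underline{l}}^{-1}=T_{i,\underline{l'}}$ and of the determinant drop is exactly the content hidden in the paper's remark that the inverses of these elementary matrices have the same form.
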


\begin{lem}\label{lem:decomp_C_1*R_1}
If $M_1\in \GL_n\left(\mathbb{C}\left(\lbrace z-1\rbrace \right)\right)$ then $M_1$ can be written as $$M_1=C^{(1)}R^{(1)}$$ with
\begin{itemize}
\item $C^{(1)}\in \GL_n\left(\mathbb{C}(z)\right)$ regular at $0$ and $\infty$ which can be written as
$$C^{(1)}=u^{-k}T_{i_1,\underline{l_1}}D_{i_1,u}\cdots T_{i_r,\underline{l_r}}D_{i_r,u} $$ where $k\in\mathbb{N}$, $\underline{l_1},\ldots,\underline{l_r}\in\mathbb{C}^n\setminus\lbrace 0\rbrace$ and $u=\frac{z-1}{z+1}\, ;$
\item $R^{(1)}\in \GL_n\left(\mathbb{C}\left(\lbrace z-1\rbrace \right)\right)$ regular at $1$.
\end{itemize}
\end{lem}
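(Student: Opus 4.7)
The plan is to transpose the proof of Lemma \ref{lem:decomp_C_0*R_0} (Sauloy's result) to the point $1$, using the specific rational function $u=\frac{z-1}{z+1}$ as local parameter. The key observation is that $u\in\mathbb{C}(z)$ has a simple zero at $z=1$, hence is a uniformizer of the discrete valuation ring $\mathbb{C}\lbrace z-1\rbrace$, while $u(0)=-1$ and $u(\infty)=1$, so both $u$ and $u^{-1}$ are regular at $0$ and $\infty$. Consequently each of the building blocks $u^{\pm 1}$, $D_{i,u}^{\pm 1}$ and $T_{i,\underline{l}}^{\pm 1}$ (with $\underline{l}\in\mathbb{C}^n$) belongs to $GL_n(\mathbb{C}(z))$ and is regular at $0$ and $\infty$; hence any finite product of such matrices, and in particular $C^{(1)}$, will automatically share these properties.

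The construction itself is an elementary row reduction. First, choose $k\in\mathbb{N}$ large enough so that $N:=u^kM_1$ has entries in $\mathbb{C}\lbrace z-1\rbrace$. Then, following Sauloy's inductive procedure with $u$ in place of $z$, one iterates the following step. If $N(1)$ is invertible, stop and set $R^{(1)}=N$. Otherwise, some non-trivial constant linear combination of the rows of $N(1)$ vanishes: one can then choose an index $i$ and a row vector $\underline{l}=(l_1,\dots,l_n)\in\mathbb{C}^n$ with $l_i\neq 0$ such that row $i$ of $T_{i,\underline{l}}\cdot N$ vanishes at $z=1$, hence is divisible by $u$ in $\mathbb{C}\lbrace z-1\rbrace$. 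Left-multiplying by $D_{i,u}^{-1}=D_{i,u^{-1}}$ then divides that row by $u$ and produces a new matrix still in $M_n(\mathbb{C}\lbrace z-1\rbrace)$. The procedure terminates after finitely many steps since the $u$-valuation of $\det N$ is a non-negative integer which drops by $1$ at each step. Inverting the sequence of left-multiplications and using that the inverse of $T_{i,\underline{l}}$ has the same form (Notation \ref{notat:mat_D,T}) yields the desired factorisation
\[
M_1 = u^{-k}\, T_{i_1,\underline{l_1}} D_{i_1,u}\cdots T_{i_r,\underline{l_r}} D_{i_r,u}\,R^{(1)},
\]
with $R^{(1)}\in GL_n(\mathbb{C}\lbrace z-1\rbrace)$ regular at $1$.

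The main point, and the only genuine modification of Sauloy's argument, is that we must use the \emph{fixed} rational function $u=\frac{z-1}{z+1}$ throughout, rather than possibly varying uniformizers $v_j\in\mathbb{C}\lbrace z-1\rbrace$ of valuation $1$ as in Lemma \ref{lem:decomp_C_0*R_0}. This is legitimate because the reduction step above only requires dividing by \textit{some} uniformizer of $\mathbb{C}\lbrace z-1\rbrace$ once a constant row combination has produced a row vanishing at $z=1$; choosing this uniformizer to be $u$ at every stage is admissible. This uniform choice is precisely what guarantees that $C^{(1)}$ lies in $\mathbb{C}(z)$ and is regular at both $0$ and $\infty$, as required for its subsequent role (in analogy with Birkhoff's connection matrix) linking the local analytic data at $1$ to those at $0$ and at $\infty$.
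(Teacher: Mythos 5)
Your proof is correct and follows essentially the same route as the paper: choose $k$ so that $u^kM_1$ is analytic at $1$, then induct on the valuation at $1$ of the determinant, peeling off one factor $T_{i,\underline{l}}D_{i,u}$ per step using a constant left null vector of the matrix evaluated at $1$. Your explicit remark that $u(0)=-1$ and $u(\infty)=1$ are nonzero, so that every factor (and hence $C^{(1)}$) is regular at $0$ and $\infty$, is a point the paper leaves implicit, but the argument is the same.
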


\begin{proof}
To prove this, we adapt the proof of \cite[Section 1.3.1, Corollaire $1$]{Sau00}. Let $k\in\mathbb{N}$ be such that the entries of $u^kM_1:=M'_1$ are analytic at $1$. Let $v_1\left(\det\left(M'_1\right)\right)$ be the valuation at $1$ of $\det\left(M'_1\right)$. If $v_1\left(\det\left(M'_1\right)\right)=0$ then $M'_1$ is regular at $1$ so we can take $R^{(1)}:=M'_1$ and $C^{(1)}:=u^{-k}I_n$. 
If $v_1\left(\det\left(M'_1\right)\right)>0$, we do an induction on $v_1\left(\det\left(M'_1\right)\right)$: we show that there exists a matrix $N'_1$ whose entries are analytic at $1$ and such that $M'_1=T_{i_1,\underline{l_1}}D_{i_1,u}N'_1$ with $$v_1\left(\det\left(N'_1\right)\right)<v_1\left(\det\left(M'_1\right)\right).$$
If $v_1\left(\det\left(M'_1\right)\right)>0$ then $M'_1(1)$ is not invertible so there exists $\underline{l}=(l_1,\ldots,l_n)\in\mathbb{C}^n\setminus\lbrace 0\rbrace$ such that 
\begin{equation}
\label{eq:lm'(1)=0}
\underline{l}M'_1(1)=0\in\mathbb{C}^n.
\end{equation} 
Let $i\in [|1,n|]$ be such that $l_i\neq 0$. The entries of $T_{i,\underline{l}}M'_1$ are analytic at $1$ and, by the equality \eqref{eq:lm'(1)=0}, its entries of the row $i$ have a valuation at $1$ greater than or equal to $1$. Therefore, the entries of 
$$N'_1:=D_{i,u}^{-1}T_{i,\underline{l}}M'_1=D_{i,u^{-1}}T_{i,\underline{l}}M'_1$$
are analytic at $1$ and $v_1\left(\det\left(N'_1\right)\right)=v_1\left(\det\left(M'_1\right)\right)-1$.
\end{proof}

From \cite[Section 1.3.1, Corollaire 2]{Sau00}, we have the following result.

\begin{lem}\label{lem:reg_at_0/infty}
Let $C^{(0)}, C^{(\infty)}\in \GL_n\left(\mathbb{C}(z)\right)$. There exists $U\in \GL_n\left(\mathbb{C}(z)\right)$ such that $UC^{(0)}$ is regular at $0$ and $UC^{(\infty)}$ is regular at $\infty$.
\end{lem}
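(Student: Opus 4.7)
My plan is to reduce the problem to a single rational matrix and then apply the Birkhoff factorization on $\mathbb{P}^1(\mathbb{C})$, finishing with an explicit construction in the diagonal case.

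First, set $N:=(C^{(\infty)})^{-1}C^{(0)}\in GL_n(\mathbb{C}(z))$ and $V:=UC^{(0)}$. Looking for $U\in GL_n(\mathbb{C}(z))$ such that $UC^{(0)}$ is regular at $0$ and $UC^{(\infty)}$ is regular at $\infty$ amounts to looking for $V\in GL_n(\mathbb{C}(z))$ regular at $0$ such that $VN^{-1}$ is regular at $\infty$: given such a $V$, the matrix $U:=V(C^{(0)})^{-1}$ satisfies $UC^{(0)}=V$ and $UC^{(\infty)}=VN^{-1}$.

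Second, apply the Birkhoff factorization on $\mathbb{P}^1(\mathbb{C})$ to $N$: there exist $N_\infty\in GL_n(\mathbb{C}(z))$ regular at $\infty$, $N_0\in GL_n(\mathbb{C}(z))$ regular at $0$, and integers $k_1,\ldots,k_n$ such that
\[
N=N_\infty\cdot\textnormal{diag}(z^{k_1},\ldots,z^{k_n})\cdot N_0.
\]
Writing $\Delta:=\textnormal{diag}(z^{k_1},\ldots,z^{k_n})$, the problem further reduces to finding a diagonal matrix $W=\textnormal{diag}(w_1,\ldots,w_n)\in GL_n(\mathbb{C}(z))$ regular at $0$ such that $W\Delta^{-1}$ is regular at $\infty$: then $V:=WN_0$ is regular at $0$, and
\[
VN^{-1}=WN_0\cdot N_0^{-1}\Delta^{-1}N_\infty^{-1}=W\Delta^{-1}N_\infty^{-1}
\]
is regular at $\infty$, as a product of two such matrices. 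For $W$, the explicit choice $w_j:=(1+z)^{k_j}$ works: one has $w_j(0)=1$, so $w_j$ is regular at $0$, and $w_jz^{-k_j}=\bigl((1+z)/z\bigr)^{k_j}=(1+1/z)^{k_j}$ is regular at $\infty$ with value $1$. Hence $W\Delta^{-1}$ is regular at $\infty$, and $U:=V(C^{(0)})^{-1}=WN_0(C^{(0)})^{-1}$ is the desired matrix.

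The principal obstacle is the appeal to the Birkhoff factorization on $\mathbb{P}^1(\mathbb{C})$. This result is classical and equivalent to Grothendieck's theorem on the splitting of vector bundles over $\mathbb{P}^1(\mathbb{C})$. If a self-contained argument is desired, it can be reproved constructively in the spirit of Lemmas~\ref{lem:decomp_C_0*R_0} and~\ref{lem:decomp_C_1*R_1}: one alternately kills the orders of vanishing of $\det(N)$ at $0$ and $\infty$ using elementary left multiplications by matrices of the form $T_{i,\underline{l}}$ and $D_{i,u}$ from Notation~\ref{notat:mat_D,T}, exactly as in \cite{thesesauloy}.
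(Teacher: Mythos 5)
Your argument is correct, but it follows a genuinely different route from the paper, which in fact gives no proof at all: Lemma \ref{lem:reg_at_0/infty} is simply quoted from \cite[Corollaire 3, p.54]{thesesauloy}, where it is established constructively by left multiplication with the elementary matrices $T_{i,\underline{l}}$ and $D_{i,u}$ of Notation \ref{notat:mat_D,T}, in the same spirit as Lemmas \ref{lem:decomp_C_0*R_0} and \ref{lem:decomp_C_1*R_1}. Your reduction to the single matrix $N=(C^{(\infty)})^{-1}C^{(0)}$ and the diagonal correction $W=\textnormal{diag}\bigl((1+z)^{k_1},\ldots,(1+z)^{k_n}\bigr)$ are complete and correct, so the whole weight rests, as you yourself note, on the factorization $N=N_\infty\Delta N_0$. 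Be careful about which form of Birkhoff--Grothendieck you invoke: the statement literally equivalent to the splitting of bundles trivialized on the two standard charts concerns $N\in GL_n\left(\mathbb{C}[z,z^{-1}]\right)$, whereas here $\det N$ may have zeros and poles in $\mathbb{C}^\star$, so you need the variant in which $N_0,N_\infty\in GL_n\left(\mathbb{C}(z)\right)$ are only required to be regular at $0$ and at $\infty$ respectively. That variant is true and classical --- for instance, apply the analytic Birkhoff factorization on a circle $|z|=\varepsilon$ avoiding the finitely many poles of $N$ and zeros of $\det N$, and observe that the resulting factors extend meromorphically to all of $\mathbb{P}^1\left(\mathbb{C}\right)$, hence are rational; alternatively it follows from Grothendieck's theorem via the lattice description of vector bundles on $\mathbb{P}^1\left(\mathbb{C}\right)$ --- but it deserves a precise reference or this short deduction rather than a bare appeal. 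In exchange for that nontrivial classical input your proof is shorter and more conceptual; Sauloy's is elementary and stays entirely within the toolkit the paper has already set up, which is presumably why the author chose to cite it rather than reprove it.
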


\begin{prop}\label{prop:ration_equiv}
If the Mahler system $\puip(Y)=AY$ is regular singular at $0,1$ and $\infty$, then it is rationally equivalent to a strictly Fuchsian system at $0,1$ and $\infty$, that is, there exists $R\in \GL_n\left(\mathbb{C}(z)\right)$ such that the system
$$\puip(Y)=BY\quad \mbox{with}\quad B:=\puip(R)^{-1}AR$$
is strictly Fuchsian at $0,1$ and $\infty$.
\end{prop}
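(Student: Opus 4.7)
My plan is to produce the gauge transformation $R \in GL_n(\mathbb{C}(z))$ in two stages: first a rational gauge that makes $A$ regular at $0$ and at $\infty$, then a second rational gauge that makes the result additionally regular at $1$, arranged so as not to destroy the regularity just obtained at $0$ and $\infty$.

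For the first stage, I would use the regular singularity at $0$ and at $\infty$ to choose local gauges $M_0 \in GL_n(\mathbb{C}(\{z\}))$ and $M_\infty \in GL_n(\mathbb{C}(\{1/z\}))$ making $A$ Fuchsian at $0$ and at $\infty$ respectively. Applying Lemma \ref{lem:decomp_C_0*R_0} (and its analogue at $\infty$, obtained by the change of variable $z \mapsto 1/z$), I decompose $M_0 = C^{(0)} R^{(0)}$ and $M_\infty = C^{(\infty)} R^{(\infty)}$ with $C^{(0)}, C^{(\infty)} \in GL_n(\mathbb{C}(z))$ and with $R^{(0)}, R^{(\infty)}$ regular at $0$ and $\infty$ respectively. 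Substituting into the local equations, a direct check shows that $\puip(C^{(i)})^{-1} A C^{(i)}$ is regular at $i$ for $i \in \{0, \infty\}$. Lemma \ref{lem:reg_at_0/infty} then provides $U \in GL_n(\mathbb{C}(z))$ with $UC^{(0)}$ regular at $0$ and $UC^{(\infty)}$ regular at $\infty$. I set $A_2 := \puip(U)\, A\, U^{-1}$; writing
\[
A_2 = \puip\bigl(UC^{(i)}\bigr) \cdot \bigl(\puip(C^{(i)})^{-1} A C^{(i)}\bigr) \cdot \bigl(UC^{(i)}\bigr)^{-1}
\]
for $i = 0$ and $i = \infty$ shows that $A_2$ is regular both at $0$ and at $\infty$.

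The second stage is the delicate one. Since $A_2$ differs from $A$ by a rational gauge, the system $\puip(Y) = A_2 Y$ is still regular singular at $1$, so there exists $M'_1 \in GL_n(\mathbb{C}(\{z-1\}))$ making $A_2$ Fuchsian at $1$. Crucially, Lemma \ref{lem:decomp_C_1*R_1} furnishes a decomposition $M'_1 = C^{(1)} R^{(1)}$ in which the rational factor $C^{(1)} \in GL_n(\mathbb{C}(z))$ is \emph{regular at $0$ and at $\infty$}, while $R^{(1)}$ is regular at $1$. The same manipulation as before gives that $\puip(C^{(1)})^{-1} A_2 C^{(1)}$ is regular at $1$. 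Because $C^{(1)}$, and hence $\puip(C^{(1)})$, takes values in $GL_n(\mathbb{C})$ at $0$ and at $\infty$, this final conjugation preserves the regularity of $A_2$ at those two points.

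Setting $R := U^{-1} C^{(1)} \in GL_n(\mathbb{C}(z))$, a short computation gives $\puip(R)^{-1} A R = \puip(C^{(1)})^{-1} A_2 C^{(1)}$, which is regular at $0$, $1$ and $\infty$; hence $B := \puip(R)^{-1} A R$ defines a system Fuchsian at the three points, as required. The main potential obstacle is exactly the second stage: a generic rational gauge at $1$ would spoil the regularity at $0$ and $\infty$ just obtained, and this is precisely what the special form of $C^{(1)}$ provided by Lemma \ref{lem:decomp_C_1*R_1} is tailored to prevent — the asymmetric treatment of $1$ versus $\{0, \infty\}$ in the argument is the key to closing the proof.
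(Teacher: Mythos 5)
Your proposal is correct and follows essentially the same route as the paper: the same decompositions from Lemmas \ref{lem:decomp_C_0*R_0} and \ref{lem:decomp_C_1*R_1}, the same matrix $U$ from Lemma \ref{lem:reg_at_0/infty}, and the same final gauge $R=U^{-1}C^{(1)}$. The only cosmetic difference is that you verify regularity of the intermediate system $A_2$ directly, whereas the paper checks that the matrices $K_i=R^{-1}F_i$ are regular at $i$ and conjugates the constant local forms $A_i$; both verifications are equivalent.
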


\begin{proof}
From Corollaries  \ref{cor:regsing_to_cst_0}, \ref{cor:regsing_to_cst_infty} and Corollary \ref{cor:regsing_to_cst_1} (using here the principal value $\loga$ instead of $\wln$), there exist matrices $F_0\in \GL_n\left(\mathbb{C}\left(\lbrace z\rbrace\right)\right)$ (respectively $F_1\in \GL_n\left(\mathbb{C}\left(\lbrace z-1\rbrace\right)\right)$, $F_\infty\in \GL_n\left(\mathbb{C}\left(\left\lbrace \frac{1}{z}\right\rbrace\right)\right)$) and $A_0\in \GL_n\left(\mathbb{C}\right)$ (respectively $A_1,A_\infty\in \GL_n\left(\mathbb{C}\right)$) such that for $i=0,1,\infty$,
$$\puip\left(F_i\right)^{-1}AF_i=A_i.$$
If we show that there exists $R\in \GL_n\left(\mathbb{C}(z)\right)$ such that for $i=0,1,\infty$, $K_i:=R^{-1}F_i$ is regular at $i$ then
$$B=\puip(R)^{-1}AR=\puip\left(K_i\right)A_iK_i^{-1}$$
will be regular at $0,1$ and $\infty$. From Lemma \ref{lem:decomp_C_0*R_0}, $F_0$ can be written as  $C^{(0)}R^{(0)}$ where $C^{(0)}\in \GL_n\left(\mathbb{C}(z)\right)$ and $R^{(0)}\in \GL_n\left(\mathbb{C}\left(\lbrace z\rbrace\right)\right)$ is regular at $0$. Similarly, with the change of variables $z\mapsto 1/z$, $F_\infty$ can be written as $C^{(\infty)}R^{(\infty)}$ where $C^{(\infty)}\in \GL_n\left(\mathbb{C}(z)\right)$ and $R^{(\infty)}\in \GL_n\left(\mathbb{C}\left(\left\lbrace \frac{1}{z}\right\rbrace\right)\right)$ is regular at $\infty$. From Lemma \ref{lem:reg_at_0/infty}, there exists $U\in \GL_n\left(\mathbb{C}(z)\right)$ such that $UC^{(0)}$ and $UC^{(\infty)}$ are respectively regular at $0$ and $\infty$. The entries of the matrix $UF_1$ are meromorphic at $1$. From Lemma \ref{lem:decomp_C_1*R_1}, $UF_1$ can be written as $C^{(1)}R^{(1)}$ with $C^{(1)}\in \GL_n\left(\mathbb{C}(z)\right)$ regular at $0,\infty$ and $R^{(1)}$ regular at $1$. The matrix $R:=U^{-1}C^{(1)}$ satisfies the required properties. Indeed, $K_1=R^{-1}F_1=R^{(1)}$ is regular at $1$ and, for $i=0,\infty$, $K_i={C^{(1)}}^{-1}UC^{(i)}R^{(i)}$ is regular at $i$.
\end{proof}

\begin{prop}
\label{prop:equiv:Ers,Esf}
The categories $\mathcal{E}_{rs}$ and $\mathcal{E}_{sf}$ are equivalent.
\end{prop}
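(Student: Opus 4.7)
The plan is to show that the inclusion functor $\iota : \mathcal{E}_{f} \hookrightarrow \mathcal{E}_{rs}$ is an equivalence of categories. By construction (see \ref{sec:catMahler_E_f}), $\mathcal{E}_{f}$ is a \emph{full} subcategory of $\mathcal{E}_{rs}$, so $\iota$ is automatically fully faithful. Thus the only thing left to verify is essential surjectivity: every object of $\mathcal{E}_{rs}$ should be isomorphic, in $\mathcal{E}_{rs}$, to an object of $\mathcal{E}_{f}$.

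For essential surjectivity, I would take $A \in \mathrm{Obj}(\mathcal{E}_{rs})$ of rank $n$ and apply Proposition \ref{prop:ration_equiv} directly: it produces $R \in GL_n(\mathbb{C}(z))$ such that $B := \puip(R)^{-1} A R$ is Fuchsian at $0, 1, \infty$, and in particular $B \in \mathrm{Obj}(\mathcal{E}_{f})$. Setting $S := R^{-1} \in GL_n(\mathbb{C}(z))$, one has $\puip(S)\, A = B\, S$, so, per the convention of \ref{sec:diffce_syst_eq}, $S$ is a morphism from $A$ to $B$ in $\mathcal{E}$. Since $S \in GL_n(\mathbb{C}(z))$, its inverse $R$ satisfies $\puip(R)\, B = A\, R$ and is a two-sided inverse of $S$ in $\mathcal{E}$, so $S$ is an isomorphism $A \xrightarrow{\sim} B$ in $\mathcal{E}$. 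As $\mathcal{E}_{rs}$ is a full subcategory of $\mathcal{E}$ containing both $A$ and $B$, this isomorphism lives in $\mathcal{E}_{rs}$, yielding essential surjectivity of $\iota$.

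There is no genuine obstacle to this argument: all the analytic and linear-algebraic work has already been packaged into Proposition \ref{prop:ration_equiv} (which itself rests on Corollaries \ref{cor:regsing_to_cst_0}, \ref{cor:regsing_to_cst_1}, \ref{cor:regsing_to_cst_infty} together with the local-to-rational factorisation lemmas \ref{lem:decomp_C_0*R_0}, \ref{lem:decomp_C_1*R_1} and the patching lemma \ref{lem:reg_at_0/infty}). The final step is then the standard categorical observation that a fully faithful and essentially surjective functor is an equivalence of categories, from which the statement follows at once.
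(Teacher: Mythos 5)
Your proof is correct and follows exactly the paper's own argument: the inclusion $\mathcal{E}_{f}\hookrightarrow\mathcal{E}_{rs}$ is fully faithful since $\mathcal{E}_{f}$ is a full subcategory, and essential surjectivity is precisely Proposition \ref{prop:ration_equiv}. You merely spell out the routine verification that the rational gauge transformation is an isomorphism in the category, which the paper leaves implicit.
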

\begin{proof}
The natural embedding $\mathcal{E}_{sf}\rightarrow\mathcal{E}_{rs}$ is an equivalence of categories. Indeed, it is clearly fully faithful and, by Proposition \ref{prop:ration_equiv}, it is also essentially surjective.
\end{proof}

\section{Local Categories and Local Groupoids}\label{part3}
\subsection{Preliminary results}
\subsubsection{On morphisms between constant systems}
\begin{lem}\label{lem:morph_cst_syst_0}
Let $p\in\mathbb{N}_{\geq 2}$, $A_0\in \GL_{n_1}\left(\mathbb{C}\right)$ and $B_0\in \GL_{n_2}\left(\mathbb{C}\right)$. If $T\in\mathcal{M}_{n_2,n_1}\left(\mathbb{C}\left(\left( z\right)\right)\right)$ satisfies the equality 
$\puip\left(T\right)A_0=B_0T$
then $T\in\mathcal{M}_{n_2,n_1}\left(\mathbb{C}\right)$.
\end{lem}

\begin{proof}
We write $T(z)=\sum\limits_{k\geq N}T_kz^k$ with $T_k\in\mathcal{M}_{n_2,n_1}\left(\mathbb{C}\right)$ and $N=\min\lbrace k\in\mathbb{Z}\mid T_k\neq 0\rbrace$. Thereby, we have 
\begin{equation}\label{eq:dev_syst_cst_0}
\sum\limits_{k\geq N}T_kA_0z^{pk}=\sum\limits_{k\geq N}B_0T_kz^k.
\end{equation}
We obtain that $N=0$ because, looking at the lower degree, $pN=N$ namely $N=0$. Therefore, by the equality \eqref{eq:dev_syst_cst_0}, all the $T_k$ such that $p$ does not divide $k$ are equal to zero. Also, if $p$ divides $k$ then $B_0T_k=T_{k/p}A_0$ and we iterate it until we obtain an integer $\dfrac{k}{p^j}\in\mathbb{N}$ which is not divisible by $p$ so $T_k=B_0^{-j}T_{k/{p^j}}A_0^j=0$. Hence $T(z)=T_0\in\mathcal{M}_{n_2,n_1}\left(\mathbb{C}\right)$.  
\end{proof}

We deduce the following result.
\begin{lem}\label{lem:morph_cst_syst_infty}
Let $p\in\mathbb{N}_{\geq 2}$, $A_\infty\in \GL_{n_1}\left(\mathbb{C}\right)$ and $B_\infty\in \GL_{n_2}\left(\mathbb{C}\right)$. If $T\in\mathcal{M}_{n_2,n_1}\left(\mathbb{C}\left(\left( \frac{1}{z}\right)\right)\right)$ is such that $\puip\left(T\right)A_\infty=B_\infty T$ then $T\in\mathcal{M}_{n_2,n_1}\left(\mathbb{C}\right)$.
\end{lem}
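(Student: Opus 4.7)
The plan is to reduce Lemma \ref{lem:morph_cst_syst_infty} to Lemma \ref{lem:morph_cst_syst_0} via the change of variables $w=1/z$. The key observation is that this substitution transforms $\mathbb{C}((1/z))$ into $\mathbb{C}((w))$, and it turns the operator $\puip$ (raising $z$ to the $p$-th power) into the analogous operator in the variable $w$, because $(1/z)^p=1/z^p$. So a Laurent series in $1/z$ satisfying a $\puip$-equivariance relation with constant matrices $A_\infty,B_\infty$ becomes a Laurent series in $w$ satisfying the same shape of relation with the same constant matrices.

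More precisely, I would set $S(w):=T(1/w)\in\mathcal{M}_{n_2,n_1}(\mathbb{C}((w)))$. Then evaluating the identity $\puip(T)(z)A_\infty=B_\infty T(z)$ at $z=1/w$ gives $T(1/w^p)A_\infty=B_\infty T(1/w)$, which reads $S(w^p)A_\infty=B_\infty S(w)$, i.e. $\puip(S)A_\infty=B_\infty S$ in the Mahler sense with respect to the variable $w$. Lemma \ref{lem:morph_cst_syst_0} (applied with $w$ in place of $z$, and with the same constant matrices $A_\infty,B_\infty$) then forces $S\in\mathcal{M}_{n_2,n_1}(\mathbb{C})$, i.e. $S$ is a constant matrix. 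Hence $T(z)=S(1/z)$ is also this same constant matrix, proving the lemma.

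There is essentially no obstacle here; the only thing to check carefully is that the change of variable $z\mapsto 1/z$ really intertwines the two Mahler operators, which follows immediately from $(1/z)^p=1/z^p$, and that it identifies $\mathbb{C}((1/z))$ with $\mathbb{C}((w))$ as difference fields. No further analytic estimate or valuation argument is needed beyond what is already contained in Lemma \ref{lem:morph_cst_syst_0}.
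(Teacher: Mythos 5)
Your proposal is correct and is essentially the paper's own argument: the paper simply states that the lemma is ``deduced'' from Lemma \ref{lem:morph_cst_syst_0}, and the intended deduction is exactly the change of variables $w=1/z$, which identifies $\mathbb{C}\left(\left(\frac{1}{z}\right)\right)$ with $\mathbb{C}\left(\left(w\right)\right)$ and intertwines the Mahler operators since $\left(1/z\right)^p=1/z^p$. Nothing further is needed.
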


\begin{lem}\label{lem:morph_cst_syst_1}
Let $p\in\mathbb{N}_{\geq 2}$, $A_1\in \GL_{n_1}\left(\mathbb{C}\right)$, $B_1\in \GL_{n_2}\left(\mathbb{C}\right)$ and $\wS_1$ be a matrix of size $n_2\times n_1$ with meromorphic entries at $\widu:= (1,0)\in\revc$. If $\puip\left(\wS_1\right)A_1=B_1\wS_1$ then the entries of $\wS_1$ are Laurent polynomials in $\wln$.
\end{lem}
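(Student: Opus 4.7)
The plan is to reduce the statement to an analogous fact for $q$-difference systems at the origin via the change of variables $u=\wln(\wz)$, and then to exploit the fact that $\puip$ becomes multiplication by $p$ in the $u$-variable.

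First, I would set $u=\wln(\wz)$. Since $\wln$ is a biholomorphism between a neighbourhood of $\widu=(1,0)$ in $\revc$ and a neighbourhood of $0$ in $\mathbb{C}$, the matrix $\wS_1(\wz)$ corresponds to a matrix $S(u)$ with entries meromorphic at $u=0$, and the relation $p\wln=\wln\circ\puip$ transforms the functional equation $\puip(\wS_1)A_1=B_1\wS_1$ into
\begin{equation*}
S(pu)A_1=B_1S(u).
\end{equation*}

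Next, I would expand $S$ as a Laurent series $S(u)=\sum_{k\geq N}S_ku^k$ with $N\in\mathbb{Z}$ and $S_k\in\mathcal{M}_{n_2,n_1}(\mathbb{C})$. Substituting and identifying coefficients of $u^k$ yields $p^kS_kA_1=B_1S_k$, equivalently
\begin{equation*}
\Psi(S_k):=B_1S_kA_1^{-1}=p^kS_k,
\end{equation*}
so each nonzero $S_k$ is an eigenvector of the endomorphism $\Psi$ of $\mathcal{M}_{n_2,n_1}(\mathbb{C})$ associated to the eigenvalue $p^k$.

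Finally, I would observe that the eigenvalues of $\Psi$ are the ratios $\beta/\alpha$ where $\beta$ runs through the spectrum of $B_1$ and $\alpha$ through the spectrum of $A_1$; in particular the set of eigenvalues of $\Psi$ is finite. Since $p\geq 2$, the map $k\mapsto p^k$ is injective on $\mathbb{Z}$, so only finitely many integers $k$ can satisfy $p^k\in\mathrm{Sp}(\Psi)$. Hence $S_k=0$ outside a finite subset of $\mathbb{Z}$, which exactly means that the entries of $S(u)$ are Laurent polynomials in $u$, i.e. the entries of $\wS_1$ are Laurent polynomials in $\wln$. There is no real obstacle in this argument beyond checking that the substitution $u=\wln(\wz)$ preserves meromorphy at the distinguished point, which is immediate from the biholomorphy of $\wln$.
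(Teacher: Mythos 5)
Your proof is correct and follows the same overall strategy as the paper: both reduce the Mahler relation at $\widu$ to the $q$-difference relation $S(pu)A_1=B_1S(u)$ at $u=0$ via $u=\wln(\wz)$, using $p\wln=\wln\circ\puip$ and the biholomorphy of $\wln$ near $\widu$. The only difference is in how the resulting $q$-difference statement is handled: the paper simply cites Sauloy (\emph{loc.\ cit.}, 2.1.3.2) for the fact that a matrix meromorphic at $0$ satisfying $S(pu)A_1=B_1S(u)$ has Laurent polynomial entries, whereas you prove it directly by identifying Laurent coefficients, obtaining $B_1S_kA_1^{-1}=p^kS_k$, and noting that the endomorphism $X\mapsto B_1XA_1^{-1}$ has only finitely many eigenvalues (the ratios of eigenvalues of $B_1$ to those of $A_1$), while $k\mapsto p^k$ is injective on $\mathbb{Z}$ since $p\geq 2$. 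This makes your argument self-contained where the paper relies on a black box, at the cost of a few extra lines; the eigenvalue computation is standard and the coefficient identification is legitimate since the Laurent expansion of a meromorphic function at $0$ has finite principal part and the functional equation holds on a punctured neighbourhood of $0$. No gaps.
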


\begin{proof}
Let $S_1:=\wS_1\circ\wln^{-1}$. Then,  
$$\forall z\in\mathbb{C},\quad S_1(pz)A_1=B_1S_1(z).$$
Since the entries of $S_1$ are meromorphic functions at $0$, by \cite[\S 2.1.3.2]{Sau03}, the entries of $S_1$ are Laurent polynomials. Therefore, the entries of $\wS_1=S_1\circ\wln$ are Laurent polynomials in $\wln$.
\end{proof}

\begin{notat}
We denote by $\mathbb{C}\left[\wln,\wln^{-1}\right]$ the ring of Laurent polynomials in $\wln$.
\end{notat}

\subsubsection{Finite dimensional complex representations of \texorpdfstring{$\mathbb{Z}$}{Z}}\leavevmode\par \label{sec:repres_of_Z}

Details and proofs can be found in \cite[Section 2.2.1]{Sau03} and in the references therein. We denote by $\mathcal{R}$ the category of finite dimensional complex representations of $\mathbb{Z}$. A representation $\rho$ of $\mathbb{Z}$ is entirely determined by $A=\rho(1)\in \GL_n\left(\mathbb{C}\right)$. Therefore, the objects of $\mathcal{R}$ can be seen as pairs $\left(\mathbb{C}^n,A\right)$ where $A\in \GL_n\left(\mathbb{C}\right)$ and the morphisms $F\in\ho_\mathcal{R}\left(\left(\mathbb{C}^n,A\right),\left(\mathbb{C}^p,B\right)\right)$ are the $F\in\mathcal{M}_{p,n}\left(\mathbb{C}\right)$ such that $FA=BF$. 
It is a neutral Tannakian category over $\mathbb{C}$ with the forgetful functor $\omega: \left(\mathbb{C}^n,A\right)\leadsto\mathbb{C}^n$, $F\leadsto F$ as fibre functor. Its Galois group is $$\mathbb{Z}^{alg}:=\aut^\otimes\left(\omega\right).$$

\begin{notat}
We consider $A\in \GL_n\left(\mathbb{C}\right)$ and we denote by $A=A_sA_u$ its multiplicative Dunford decomposition. Let $(\gamma,\lambda)\in\ho_{gr}\left(\mathbb{C}^*,\mathbb{C}^*\right)\times\mathbb{C}$. We write $$A^{(\gamma,\lambda)}:=\gamma\left(A_s\right)A_u^\lambda=A_u^\lambda\gamma\left(A_s\right)$$ where
\begin{itemize}
\item $\gamma$ acts on the eigenvalues of $A_s$: if $A_s=Q.\textnormal{diag}(c_1,\ldots,c_n).Q^{-1}$ then $$\gamma\left(A_s\right):=Q.\textnormal{diag}\left(\gamma(c_1),\ldots,\gamma(c_n)\right).Q^{-1}\, ;$$
\item $A_u^\lambda=\sum\limits_{k\geqslant 0}\binom{\lambda}{k}\left(A_u-I_n\right)^k$.
\end{itemize}
The matrix $A^{(\gamma,\lambda)}\in \GL_n\left(\mathbb{C}\right)$ defines an automorphism of $\mathbb{C}^n=\omega\left(\left(\mathbb{C}^n,A\right)\right)$. 
\end{notat}

\begin{prop}\label{prop:grp_isom_Zalg}
There is a proalgebraic group isomorphism 
$$
\begin{array}{rccl}
\ho_{gr}\left(\mathbb{C}^\star,\mathbb{C}^\star\right)\times\mathbb{C}  & \rightarrow & \mathbb{Z}^{alg}  & \\
(\gamma,\lambda) & \mapsto & \left(\mathbb{C}^n,A\right)\mapsto A^{(\gamma,\lambda)}.
\end{array}
$$
\end{prop}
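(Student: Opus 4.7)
The strategy is to exploit the fact that every finite-dimensional complex representation of $\mathbb{Z}$ is determined by a single matrix $A\in GL_n(\mathbb{C})$, and that this matrix admits a multiplicative Dunford decomposition $A=A_sA_u$ which respects morphisms and tensor products. This will let me separate the analysis of $\aut^\otimes(\omega)$ into a semisimple piece, parametrized by group homomorphisms $\gamma:\mathbb{C}^\star\to\mathbb{C}^\star$, and a unipotent piece, parametrized by a scalar $\lambda\in\mathbb{C}$.

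First I would verify that the given map is a well-defined morphism of groups, by checking that for every pair $(\gamma,\lambda)$ the family $\sigma_{(\gamma,\lambda)}:(\mathbb{C}^n,A)\mapsto A^{(\gamma,\lambda)}$ is a tensor automorphism of $\omega$. Naturality: if $F\in\ho_{\mathcal{R}}\bigl((\mathbb{C}^n,A),(\mathbb{C}^p,B)\bigr)$, then $FA=BF$ implies $FA_s=B_sF$ and $FA_u=B_uF$ by uniqueness of the Dunford decomposition, hence $F\gamma(A_s)=\gamma(B_s)F$ and $FA_u^\lambda=B_u^\lambda F$. Compatibility with $\otimes$ follows from $(A\otimes B)_s=A_s\otimes B_s$, $(A\otimes B)_u=A_u\otimes B_u$, and the multiplicativity of $\gamma$ and of $U\mapsto U^\lambda$ on commuting unipotents. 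That the assignment $(\gamma,\lambda)\mapsto\sigma_{(\gamma,\lambda)}$ is a group homomorphism is then immediate from $\gamma_1(A_s)\gamma_2(A_s)=(\gamma_1\gamma_2)(A_s)$ and $A_u^{\lambda_1}A_u^{\lambda_2}=A_u^{\lambda_1+\lambda_2}$, since semisimple and unipotent parts commute.

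Injectivity is established by testing on two families of objects. Applying $\sigma_{(\gamma,\lambda)}$ to the one-dimensional representation $(\mathbb{C},c)$ (so $A_s=c$, $A_u=1$) gives multiplication by $\gamma(c)$, so if $\sigma_{(\gamma,\lambda)}$ is the identity we obtain $\gamma=1$. Next, applying it to the Jordan block $\bigl(\begin{smallmatrix}1&1\\0&1\end{smallmatrix}\bigr)$ forces $\lambda=0$. For surjectivity, let $\sigma\in\aut^\otimes(\omega)$. On each $(\mathbb{C},c)$, $\sigma$ is multiplication by some scalar $\gamma(c)\in\mathbb{C}^\star$; compatibility with $\otimes$ and duals forces $\gamma\in\ho_{gr}(\mathbb{C}^\star,\mathbb{C}^\star)$. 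On the Jordan block $U=\bigl(\begin{smallmatrix}1&1\\0&1\end{smallmatrix}\bigr)$, the automorphism $\sigma_U$ must commute with $U$ (which is itself an endomorphism of $(\mathbb{C}^2,U)$) and preserve the line $\ker(U-I_2)$ on which it acts as $\gamma(1)=1$; this forces $\sigma_U=U^\lambda$ for a unique $\lambda\in\mathbb{C}$. The remaining step is to show $\sigma_A=A^{(\gamma,\lambda)}$ for every $A$. Reducing to the case where $A$ has a single eigenvalue $c$ via the block decomposition into generalized eigenspaces (which is respected by $\sigma$ by naturality with respect to the projectors), one writes $A=cI\cdot A_u$ and identifies $A_u$ as a direct summand of a tensor power of the standard Jordan block by an appropriate morphism in $\mathcal{R}$; naturality then forces $\sigma_A=\gamma(c)A_u^\lambda=A^{(\gamma,\lambda)}$.

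Finally I would verify that this set-theoretic isomorphism is an isomorphism of pro-algebraic groups. The semisimple Tannakian subcategory generated by one-dimensional objects has affine group scheme $\spec\mathbb{C}[\mathbb{C}^\star]$, whose $\mathbb{C}$-points are exactly $\ho_{gr}(\mathbb{C}^\star,\mathbb{C}^\star)$, while the unipotent Tannakian subcategory (generated by powers of the standard Jordan block) has affine group scheme $\mathbb{G}_a$; the product decomposition reflects that every object of $\mathcal{R}$ is an iterated extension of semisimple objects with a canonical semisimple/unipotent splitting at the level of endomorphisms. The main obstacle, conceptually, is the surjectivity step: one needs the Tannakian principle that an element of $\aut^\otimes(\omega)$ is determined by its values on the generators of all Tannakian subcategories, together with the precise extraction of $\lambda$ from the unipotent Jordan block and its propagation to arbitrary unipotent matrices through naturality.
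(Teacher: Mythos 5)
The paper does not prove this proposition itself: it is quoted from Sauloy's thesis (Annexe A of \cite{thesesauloy}), and your argument is essentially the standard one given there --- Dunford decomposition separating $\aut^{\otimes}(\omega)$ into a diagonalizable part read off from the rank-one objects and a unipotent part read off from the $2\times 2$ Jordan block. The main steps (naturality and tensor-compatibility of $(\gamma,\lambda)\mapsto A^{(\gamma,\lambda)}$, injectivity on $(\mathbb{C},c)$ and on the Jordan block, surjectivity via generalized eigenspaces and realizing each unipotent block as a direct summand of a tensor power of the standard Jordan block) are all correct. Two minor points: the functoriality $FA=BF\Rightarrow FA_s=B_sF$ is not literally a consequence of \emph{uniqueness} of the Dunford decomposition when $F$ is not invertible --- it follows from $F$ preserving generalized eigenspaces (or from $A_s$, $B_s$ being polynomials in $A$, $B$) --- and the final verification that the bijection is an isomorphism of \emph{pro-algebraic} groups is only sketched; one should exhibit $\mathbb{Z}^{alg}$ as the inverse limit of the Galois groups of the finitely generated Tannakian subcategories and check the map on each, but this is routine.
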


\subsection{Localization at \texorpdfstring{$0$}{0} and at \texorpdfstring{$\infty$}{∞}}
\label{sec:localization0,infty}
\subsubsection{The categories \texorpdfstring{$\mathcal{E}_{rs}^{(0)}$}{Ers0} and \texorpdfstring{$\mathcal{E}_{rs}^{(\infty)}$}{Ers∞}}\leavevmode\par \label{sec:catMahler_E_rs0,P0}
The category $\mathcal{E}_{rs}^{(0)}$, respectively $\mathcal{E}_{rs}^{(\infty)}$, has the same objects as $\mathcal{E}_{rs}$ but morphisms from $A$ of rank $n_1$ to $B$ of rank $n_2$ are all $R\in \mathcal{M}_{n_2,n_1}\left(\mathbb{C}\left(\lbrace z\rbrace\right)\right)$, respectively $R\in \mathcal{M}_{n_2,n_1}\left(\mathbb{C}\left(\left\lbrace \frac{1}{z}\right\rbrace\right)\right)$, such that $\puip\left(R\right)A=BR$. From this equation satisfied by $R$, we deduce that 
$$
R\in \mathcal{M}_{n_2,n_1}\left(\mathcal{M}\left(D(0,1)\right)\right),\quad \mbox{respectively}\quad R\in \mathcal{M}_{n_2,n_1}\left(\mathcal{M}\left(\mathbb{P}^1\left(\mathbb{C}\right)\setminus\overline{D}(0,1)\right)\right).
$$

Until the end of Section \ref{sec:localization0,infty}, we consider $i\in\lbrace 0,\infty\rbrace$.

We denote by $\mathcal{P}^{(i)}$ the full subcategory of $\mathcal{E}_{rs}^{(i)}$ whose objects are the $C\in \GL_n\left(\mathbb{C}\right)$.

By Lemma \ref{lem:morph_cst_syst_0} and Lemma \ref{lem:morph_cst_syst_infty}, we deduce that: 
\begin{prop} If $R\in\ho_{\mathcal{P}^{(i)}}\left(A_i,B_i\right)$ with $A_i\in \GL_{n_1}\left(\mathbb{C}\right)$ and $B_i\in \GL_{n_2}\left(\mathbb{C}\right)$ then $R\in \mathcal{M}_{n_2,n_1}\left(\mathbb{C}\right)$. 

\end{prop}

It is clear that: 
\begin{lem}
The categories $\mathcal{R}$ and $\mathcal{P}^{(i)}$ are equivalent. 
\end{lem}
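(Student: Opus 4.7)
The plan is to produce the obvious functor between the two categories and verify that it is a bijection on objects and on morphism sets. Since objects of $\mathcal{P}^{(0)}$ are literally matrices $A_0\in GL_n(\mathbb{C})$, and objects of $\mathcal{R}$ are pairs $\left(\mathbb{C}^n,A\right)$ with $A\in GL_n(\mathbb{C})$, I would define
$$\Phi:\mathcal{R}\longrightarrow\mathcal{P}^{(0)},\qquad \left(\mathbb{C}^n,A\right)\longmapsto A,\qquad F\longmapsto F.$$
Clearly $\Phi$ is a bijection on objects: its inverse sends $A_0\in GL_n(\mathbb{C})$ to $\left(\mathbb{C}^n,A_0\right)$.

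The key point is to check that $\Phi$ is well-defined and fully faithful on morphisms. Let $A_0\in GL_{n_1}(\mathbb{C})$, $B_0\in GL_{n_2}(\mathbb{C})$. By the preceding proposition, every morphism $R\in\ho_{\mathcal{P}^{(0)}}(A_0,B_0)$ lies in $\mathcal{M}_{n_2,n_1}(\mathbb{C})$, so $\puip(R)=R$; hence the defining relation $\puip(R)A_0=B_0R$ of $\mathcal{P}^{(0)}$ becomes $RA_0=B_0R$, which is precisely the morphism condition in $\mathcal{R}$. Thus
$$\ho_{\mathcal{P}^{(0)}}(A_0,B_0)=\bigl\{R\in\mathcal{M}_{n_2,n_1}(\mathbb{C})\,\big|\,RA_0=B_0R\bigr\}=\ho_{\mathcal{R}}\bigl((\mathbb{C}^{n_1},A_0),(\mathbb{C}^{n_2},B_0)\bigr),$$
and $\Phi$ is the identity on these hom-sets. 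Composition is also preserved since it is just matrix multiplication in both categories.

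There is no real obstacle here: the content of the lemma is entirely absorbed by the preceding proposition (Lemma \ref{lem:morph_cst_syst_0} applied to Laurent series coefficients, which forces the morphism matrices to be constant). Once one knows that morphisms between constant Mahler systems are automatically constant matrices, the equivalence is a tautology, and in fact $\Phi$ is an isomorphism of categories.
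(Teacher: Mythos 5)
Your proof is correct and is exactly the argument the paper has in mind: the paper states this lemma with no proof (prefacing it only with ``It is clear that''), precisely because, as you observe, all the content is in the preceding proposition forcing morphisms between constant systems to be constant matrices, after which the identification $\left(\mathbb{C}^n,A\right)\leftrightarrow A$ gives an isomorphism of categories. Nothing to add.
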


The following is an immediate consequence:
\begin{prop}
The category $\mathcal{P}^{(i)}$ is a neutral Tannakian category over $\mathbb{C}$ and the functor 
$$\begin{array}{rl}
\omega_{l,i}: & \mathcal{P}^{(i)} \rightarrow \vect^f_{\mathbb{C}}\\
 & \begin{cases}
A_i\leadsto \mathbb{C}^{n} \quad\mbox{with}\quad n \quad\mbox{the rank of}\quad A_i \\
R  \leadsto  R 
\end{cases}
\end{array}$$
is a fibre functor for $\mathcal{P}^{(i)}$.  
\end{prop}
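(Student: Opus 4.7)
The plan is to bootstrap the proposition off the preceding lemma, which gives an equivalence of categories $\mathcal{R}\simeq\mathcal{P}^{(0)}$, together with the already established fact (recalled in \ref{sec:repres_of_Z}) that $\mathcal{R}$ is a neutral Tannakian category over $\mathbb{C}$ with the forgetful functor $\omega:\left(\mathbb{C}^n,A\right)\leadsto\mathbb{C}^n$, $F\leadsto F$ as fibre functor. The substance of the proof is therefore to upgrade the abstract equivalence of abelian categories into an equivalence of \emph{tensor} categories, and then to check that $\omega_{l,0}$ corresponds to $\omega$ through this equivalence.

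First I would describe the equivalence explicitly: let $\Psi:\mathcal{R}\to\mathcal{P}^{(0)}$ send the object $\left(\mathbb{C}^n,A\right)$ to the Mahler system with constant matrix $A\in GL_n(\mathbb{C})$ (which is trivially regular singular at $0$, $1$, $\infty$, hence an object of $\mathcal{P}^{(0)}$) and act as the identity on morphisms. This is well defined: for $F\in\ho_{\mathcal{R}}((\mathbb{C}^{n_1},A),(\mathbb{C}^{n_2},B))$ the relation $FA=BF$, combined with the fact that $F$ has constant (hence $\puip$-invariant) entries, is exactly $\puip(F)A=BF$. Fully faithful and essential surjectivity are immediate, the former via Lemma \ref{lem:morph_cst_syst_0}, which ensures that any morphism in $\mathcal{P}^{(0)}$ between constant systems is itself constant and so lies in the image of $\Psi$.

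Next I would check that $\Psi$ is a tensor functor. The tensor product in $\mathcal{P}^{(0)}$ is inherited from $\mathcal{E}^{(0)}$ and is given by the Kronecker product: $A_0\otimes B_0$ is still a constant invertible matrix, so $\mathcal{P}^{(0)}$ is stable under $\otimes$. The unit object $1\in\mathbb{C}(z)^\star$ is constant and coincides with the trivial representation of $\mathbb{Z}$. The formulas in \ref{sec:equiv_syst/modules} for the internal Hom $\underline{\ho}(A_0,B_0)$ and dual $({}^tA_0)^{-1}$ evidently produce constant matrices when $A_0$, $B_0$ are constant, and they agree with the usual internal Hom and dual of $\mathbb{Z}$-representations. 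It remains to verify that $\mathcal{P}^{(0)}$ has kernels and cokernels: for a morphism $R:A_0\to B_0$ between constant systems, $R$ is constant by Lemma \ref{lem:morph_cst_syst_0}, and its kernel as a $\mathbb{Z}$-representation (a constant sub-system of $A_0$) provides a kernel in $\mathcal{P}^{(0)}$, cokernels being obtained by duality as in the proof of Proposition \ref{prop:E_Tannakian}. Hence $\Psi$ is an equivalence of rigid abelian tensor categories.

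Finally I would observe that on $\mathcal{R}$ the forgetful functor $\omega$ is a fibre functor (this is precisely the content of \ref{sec:repres_of_Z}), and that $\omega_{l,0}\circ\Psi=\omega$ tautologically: both send $(\mathbb{C}^n,A)$ to $\mathbb{C}^n$ and act as the identity on morphisms. An equivalence of tensor categories transports fibre functors, so $\omega_{l,0}$ is a $\mathbb{C}$-valued fibre functor on $\mathcal{P}^{(0)}$, which is therefore a neutral Tannakian category over $\mathbb{C}$. The only step that requires genuine care is the stability of $\mathcal{P}^{(0)}$ under the internal Hom and kernels, but both follow directly from Lemma \ref{lem:morph_cst_syst_0} and the explicit tensor formulas; I expect no deeper obstacle.
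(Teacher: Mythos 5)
Your proposal is correct and follows exactly the route the paper takes: the paper states the equivalence $\mathcal{R}\simeq\mathcal{P}^{(0)}$ as a lemma and then declares the proposition ``an immediate consequence,'' and your argument simply fills in the details (tensor compatibility, kernels via Lemma \ref{lem:morph_cst_syst_0}, and the identity $\omega_{l,0}\circ\Psi=\omega$) that the paper leaves implicit.
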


\begin{prop}
The natural embedding $\mathcal{P}^{(i)}\rightarrow\mathcal{E}_{rs}^{(i)}$ is an equivalence of categories. 
\end{prop}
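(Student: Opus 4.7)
The plan is straightforward: the embedding $\mathcal{P}^{(0)}\to\mathcal{E}_{rs}^{(0)}$ is fully faithful by definition, since $\mathcal{P}^{(0)}$ is a \emph{full} subcategory of $\mathcal{E}_{rs}^{(0)}$. It therefore remains only to prove essential surjectivity.

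Given $A\in\obj(\mathcal{E}_{rs}^{(0)})$, that is $A\in GL_n(\mathbb{C}(z))$ such that $\puip(Y)=AY$ is regular singular at $0,1$ and $\infty$, I would directly invoke Corollary \ref{cor:regsing_to_cst_0}. This yields $F_0\in GL_n(\mathcal{M}(D(0,1)))$ and $A_0\in GL_n(\mathbb{C})$ satisfying $\puip(F_0)^{-1}AF_0=A_0$, equivalently $\puip(F_0)A_0=AF_0$. The matrix $A_0$ defines a constant Mahler system, which is Fuchsian (hence regular singular) at $0$, $1$ and $\infty$, so $A_0$ is indeed an object of $\mathcal{P}^{(0)}\subset\mathcal{E}_{rs}^{(0)}$.

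Next I would verify that $F_0$ is an isomorphism from $A_0$ to $A$ in $\mathcal{E}_{rs}^{(0)}$. Any element of $\mathcal{M}(D(0,1))$ is in particular meromorphic at $0$, so its germ at $0$ lies in $\mathbb{C}(\{z\})$; the same applies to $F_0^{-1}\in GL_n(\mathcal{M}(D(0,1)))$. The relation $\puip(F_0)A_0=AF_0$ is precisely the condition that $F_0\in\mr_{\mathcal{E}_{rs}^{(0)}}(A_0,A)$, and inverting it gives $\puip(F_0^{-1})A=A_0 F_0^{-1}$, i.e.\ $F_0^{-1}\in\mr_{\mathcal{E}_{rs}^{(0)}}(A,A_0)$. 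Hence $F_0$ is the desired isomorphism, and essential surjectivity is established.

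No serious obstacle arises: the proof is essentially a repackaging of Corollary \ref{cor:regsing_to_cst_0}, together with the trivial observation that a constant system is regular singular everywhere. The only minor point to keep track of is the inclusion $\mathcal{M}(D(0,1))\subset\mathbb{C}(\{z\})$ obtained by taking germs at $0$, which guarantees that the matrix $F_0$ produced by the corollary genuinely lives in the morphism space of $\mathcal{E}_{rs}^{(0)}$.
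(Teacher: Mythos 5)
Your proof is correct and follows essentially the same route as the paper: full faithfulness is immediate because $\mathcal{P}^{(0)}$ is by definition a full subcategory, and essential surjectivity is exactly Corollary \ref{cor:regsing_to_cst_0}, with $F_0$ (viewed via its germ at $0$ as an element of $GL_n\left(\mathbb{C}\left(\lbrace z\rbrace\right)\right)$) providing the isomorphism $A_0\rightarrow A$ in $\mathcal{E}_{rs}^{(0)}$. The paper's proof is a two-line version of the same argument; your extra verifications are accurate.
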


\begin{proof}
By definition, this functor is full and faithful. According to Corollaries \ref{cor:regsing_to_cst_0} and \ref{cor:regsing_to_cst_infty}, it is also essentially surjective 
\end{proof}

\begin{cor}
The category $\mathcal{E}_{rs}^{(i)}$ is a neutral Tannakian category over $\mathbb{C}$. 
\end{cor}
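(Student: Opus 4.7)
The plan is to transport the Tannakian structure from $\mathcal{P}^{(0)}$ to $\mathcal{E}_{rs}^{(0)}$ along the equivalence of categories established in the preceding proposition. Since being a neutral Tannakian category over $\mathbb{C}$ is a property preserved under equivalences of $\mathbb{C}$-linear rigid abelian tensor categories, once I verify that the embedding $\mathcal{P}^{(0)}\hookrightarrow\mathcal{E}_{rs}^{(0)}$ is compatible with the tensor structure (not just with the underlying abelian structure), the corollary will follow formally.

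First I would observe that both categories inherit their tensor structure from the ambient category $\mathcal{E}^{(0)}$, whose tensor structure is described in \ref{sec:equiv_syst/modules} via the Kronecker product of matrices. The embedding $\mathcal{P}^{(0)}\hookrightarrow\mathcal{E}_{rs}^{(0)}$ respects this structure: the Kronecker product of two constant invertible matrices is a constant invertible matrix, the unit object $1\in\mathbb{C}^\star$ already lies in $\mathcal{P}^{(0)}$, and the internal Hom and dual of objects of $\mathcal{P}^{(0)}$ (computed in $\mathcal{E}^{(0)}$) remain objects of $\mathcal{P}^{(0)}$, using the explicit formulas $\underline{\textnormal{Hom}}(A_0,B_0)=D_{A_0,B_0}$ and $\underline{\textnormal{Hom}}(A_0,1)=({}^t A_0)^{-1}$, which are constant whenever $A_0$ and $B_0$ are.

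Next I would fix a quasi-inverse $G:\mathcal{E}_{rs}^{(0)}\rightarrow\mathcal{P}^{(0)}$ of the embedding. Its existence is guaranteed by the preceding proposition, and on objects it can be realized concretely by Corollary \ref{cor:regsing_to_cst_0}: to any regular singular system $A$ one associates its constant reduction $A_0=\puip(F_0)^{-1}AF_0$. By construction, $G$ is a $\mathbb{C}$-linear, exact, tensor-preserving equivalence. I would then define the fibre functor for $\mathcal{E}_{rs}^{(0)}$ as $\omega^{(0)}:=\omega_{l,0}\circ G$, which is $\mathbb{C}$-linear, exact, faithful and tensor-preserving because each factor is. This exhibits $\mathcal{E}_{rs}^{(0)}$ as a neutral Tannakian category over $\mathbb{C}$.

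The only mildly subtle point I would watch carefully is that the quasi-inverse is well-defined as a functor (not merely on isomorphism classes), but this is automatic once we pick, for each object $A$ of $\mathcal{E}_{rs}^{(0)}$, one gauge transformation $F_0$ as in Corollary \ref{cor:regsing_to_cst_0}; morphisms are then transported by conjugation and remain $\mathbb{C}$-valued by Lemma \ref{lem:morph_cst_syst_0}. No obstacle of substance is expected.
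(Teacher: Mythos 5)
Your proposal is correct and follows the same route as the paper, which treats this corollary as an immediate consequence of the preceding proposition (the embedding $\mathcal{P}^{(0)}\hookrightarrow\mathcal{E}_{rs}^{(0)}$ is an equivalence) together with the fact that $\mathcal{P}^{(0)}$, being equivalent to $\mathcal{R}$, is a neutral Tannakian category over $\mathbb{C}$. You merely spell out the details the paper leaves implicit: the tensor-compatibility of the embedding and the transport of the fibre functor $\omega_{l,0}$ along a quasi-inverse.
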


\subsubsection{Local Galois groupoids at \texorpdfstring{$0$}{0} and at \texorpdfstring{$\infty$}{∞}}\label{sec:GaloisGroupoid_0}

\begin{defin}
The \emph{local Galois groupoid at $i$}, denoted by $G_i$, is a groupoid with one object, the fibre functor $\omega_{l,i}: \mathcal{P}^{(i)} \rightarrow \vect^f_{\mathbb{C}}$, and such that the morphisms from $\omega_{l,i}$ to $\omega_{l,i}$ are the elements of 
$$G_i\left(\omega_{l,i},\omega_{l,i}\right):=\aut^\otimes\left(\omega_{l,i}\right).$$
The \emph{local Galois group at $i$} is the Galois group of $\mathcal{P}^{(i)}$ that is $G_i\left(\omega_{l,i},\omega_{l,i}\right)$. We also denote it by $G_i$.
 
\end{defin}
In Section \ref{sec:catMahler_E_rs0,P0}, we have seen that the categories $\mathcal{R}$, $\mathcal{P}^{(i)}$ and $\mathcal{E}_{rs}^{(i)}$ are equivalent Tannakian categories over $\mathbb{C}$. Therefore, they have the same Galois group which is $\mathbb{Z}^\text{alg}$ (see Section \ref{sec:repres_of_Z}).
As a consequence, the local Galois group at $i$ is 
$$G_i=\aut^\otimes\left(\omega_{l,i}\right)=\mathbb{Z}^\text{alg}.$$
The morphisms of the local Galois groupoid at $i$ are the elements of $\mathbb{Z}^{alg}.$

\subsection{Localization at \texorpdfstring{$1$}{1}}
\subsubsection{The category \texorpdfstring{$\mathcal{E}_{rs}^{(1)}$}{Ers1}}
\begin{notat}
We introduce 
$$
\begin{array}{rccl}
\pi_1: & \mathbb{C}\setminus\mathbb{R}^-\times\left]-\pi,\pi\right[ & \rightarrow & \mathbb{C}\setminus\mathbb{R}^-\\
 & \left( re^{it},t\right) & \mapsto & re^{it}.
\end{array}
$$
\end{notat}

The category $\mathcal{E}_{rs}^{(1)}$ has the same objects as $\mathcal{E}_{rs}$ but morphisms from $A$ of rank $n_1$ to $B$ of rank $n_2$ are all $R\in \mathcal{M}_{n_2,n_1}\left(\mathbb{C}\left(\lbrace z-1\rbrace\right)\right)$ such that 
\begin{equation}
\label{eq:satisfied_byR}
\puip\left(R\right)A=BR. 
\end{equation}

\begin{rem}
The entries of $R\circ\exp$ are meromorphic functions at $0$. From the equation \eqref{eq:satisfied_byR}, we have 
$\left(R\circ\exp\right)(pz)=B(\exp(z))\left(R\circ\exp\right)(z)A^{-1}(\exp(z))$
and the entries of the matrices $A\circ\exp$ and $B\circ\exp$ are meromorphic functions on $\mathbb{C}$. We deduce that 
$$R\circ\exp\in \mathcal{M}_{n_2,n_1}\left(\mathcal{M}\left(\mathbb{C}\right)\right).$$
\end{rem}

We denote by $\mathcal{P}^{(1)}$ the category whose objects are all $A_1\in \GL_n\left(\mathbb{C}\right)$ and morphisms from $A_1$ of rank $n_1$ to $B_1$ of rank $n_2$ are all matrices $\widetilde{R}$ of size $n_2\times n_1$ whose entries are meromorphic functions at $\widu\in\revc$ and such that 
$$\puip\left(\widetilde{R}\right)A_1=B_1\widetilde{R}.$$

By Lemma \ref{lem:morph_cst_syst_1}, we deduce that: 
\begin{prop}
If $\widetilde{R}\in\ho_{\mathcal{P}^{(1)}}\left(A_1,B_1\right)$ with $A_1$ and $B_1$ respectively of rank $n_1$ and $n_2$ then the entries of $\widetilde{R}$ are Laurent polynomials in $\wln$. 
\end{prop}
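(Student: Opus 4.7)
The approach is to recognize this proposition as a direct categorical reformulation of Lemma \ref{lem:morph_cst_syst_1}. I would begin by unfolding the definition of morphisms in $\mathcal{P}^{(1)}$: a morphism $R \in \ho_{\mathcal{P}^{(1)}}(A_1, B_1)$ is, by construction, a matrix of size $n_2 \times n_1$ whose entries are meromorphic at $\widu = (1,0) \in \revc$ and which satisfies the intertwining relation $\puip(R) A_1 = B_1 R$. These are exactly the hypotheses of Lemma \ref{lem:morph_cst_syst_1} applied with $\wS_1 := R$, so a direct invocation of that lemma yields the conclusion that the entries of $R$ are Laurent polynomials in $\wln$.

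Since the argument amounts to a single appeal to the lemma, there is no real obstacle here; the technical content has been packaged into Lemma \ref{lem:morph_cst_syst_1}. For context, the lemma itself proceeds by the change of variable $S_1 := \wS_1 \circ \wln^{-1}$, which, using the identity $p\wln = \wln \circ \puip$, transforms the Mahler intertwining $\puip(\wS_1) A_1 = B_1 \wS_1$ into the $q$-difference intertwining $S_1(pu)A_1 = B_1 S_1(u)$ on $\mathbb{C}$, where now $S_1$ has meromorphic entries at $0$. A classical result of Sauloy on meromorphic solutions of constant $q$-difference systems identifies such $S_1$ as having Laurent polynomial entries in $u$, and pulling back via $u = \wln(\wz)$ produces Laurent polynomials in $\wln$, as required.
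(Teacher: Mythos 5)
Your proposal is correct and takes exactly the same route as the paper: the proposition is stated there as an immediate consequence of Lemma \ref{lem:morph_cst_syst_1}, whose hypotheses (a matrix of size $n_2\times n_1$ with entries meromorphic at $\widu$ satisfying $\puip(R)A_1=B_1R$) are precisely the defining conditions of a morphism in $\mathcal{P}^{(1)}$. Your recap of the lemma's own proof --- conjugating by $\wln$ via the identity $p\wln=\wln\circ\puip$ to reduce to Sauloy's result on meromorphic morphisms between constant $q$-difference systems --- also matches the paper's argument.
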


\begin{prop}
The categories $\mathcal{E}_{rs}^{(1)}$ and $\mathcal{P}^{(1)}$ are equivalent via the functor 
$$\begin{array}{rl}
\mathscr{G}: & \mathcal{P}^{(1)} \rightarrow \mathcal{E}_{rs}^{(1)} \\
 & \begin{cases}
A_1  \leadsto A_1 \\
\widetilde{R}  \leadsto  \widetilde{R}\circ\pi_1^{-1}.
\end{cases}
\end{array}$$
\end{prop}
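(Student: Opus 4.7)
My plan is to verify the three standard properties: that $\mathscr{G}$ is a well-defined functor, that it is fully faithful, and that it is essentially surjective.

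First I would check that $\mathscr{G}$ is well-defined. On objects, a constant matrix $A_1 \in GL_n(\mathbb{C})$ belongs to $GL_n(\mathbb{C}(z))$ and defines a system that is regular (hence regular singular) at $0$, $1$, and $\infty$, so it is a bona fide object of $\mathcal{E}_{rs}^{(1)}$. On morphisms, the preceding proposition tells us that any $\widetilde{R} \in \ho_{\mathcal{P}^{(1)}}(A_1,B_1)$ has entries which are Laurent polynomials in $\wln$. Since $\wln \circ \pi_1^{-1}$ is the principal logarithm, which is holomorphic near $1$ with a simple zero there, the entries of $\widetilde{R}\circ \pi_1^{-1}$ lie in $\mathbb{C}(\{z-1\})$. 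The key compatibility is $\puip \circ \pi_1 = \pi_1 \circ \puip$ (valid on a neighborhood of $\widu$ using the relation $p\wln = \wln\circ\puip$), which implies $\puip(\widetilde{R}\circ\pi_1^{-1}) = \puip(\widetilde{R})\circ\pi_1^{-1}$, so the intertwining equation $\puip(\widetilde{R})A_1 = B_1\widetilde{R}$ transports to $\puip(\widetilde{R}\circ\pi_1^{-1})A_1 = B_1(\widetilde{R}\circ\pi_1^{-1})$. Functoriality (identity and composition) is immediate since $\mathscr{G}$ is defined by composition with a fixed map.

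Next I would show $\mathscr{G}$ is fully faithful. For faithfulness, if $\widetilde{R}\circ\pi_1^{-1} = \widetilde{R}'\circ\pi_1^{-1}$ as germs at $1$, then $\widetilde{R}$ and $\widetilde{R}'$ coincide on a neighborhood of $\widu$, and since both are matrices of Laurent polynomials in $\wln$ (a finite-dimensional $\mathbb{C}$-vector space), they must be equal globally. For fullness, given $R \in \ho_{\mathcal{E}_{rs}^{(1)}}(A_1,B_1)$, I would set $\widetilde{R} := R\circ\pi_1$, which is a germ of meromorphic matrix at $\widu$; the same relation $\puip\circ\pi_1 = \pi_1\circ\puip$ gives $\puip(\widetilde{R})A_1 = B_1\widetilde{R}$, so $\widetilde{R} \in \ho_{\mathcal{P}^{(1)}}(A_1,B_1)$ and $\mathscr{G}(\widetilde{R}) = R$.

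Finally, for essential surjectivity, let $A \in \obj(\mathcal{E}_{rs}^{(1)})$. By Corollary \ref{cor:regsing_to_cst_1} there exist $\wF_1 \in GL_n(\mathcal{M}(\revc))$ and $A_1 \in GL_n(\mathbb{C})$ with $\puip(\wF_1)^{-1}(\pi^\star A)\wF_1 = A_1$. Restricting $\wF_1$ to a small neighborhood of $\widu$ and setting $F := \wF_1\circ\pi_1^{-1}$ produces a matrix with entries meromorphic in a neighborhood of $1$, hence an element of $GL_n(\mathbb{C}(\{z-1\}))$, satisfying $\puip(F)^{-1}AF = A_1$; this is the required isomorphism from $\mathscr{G}(A_1)$ to $A$ in $\mathcal{E}_{rs}^{(1)}$.

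The main obstacle I anticipate is the essential surjectivity step, specifically passing from the global gauge transformation $\wF_1$ on the universal cover (as provided by Corollary \ref{cor:regsing_to_cst_1}) to a local gauge transformation $F$ with entries genuinely in $\mathbb{C}(\{z-1\})$. The subtlety is that $\wF_1$ itself need not descend to a single-valued function on a neighborhood of $1$, but the composition $\wF_1\circ\pi_1^{-1}$ picks out one branch, and one must check that this branch is indeed meromorphic (rather than merely analytic on a slit disk) — which follows because $\pi_1^{-1}$ is biholomorphic onto its image and $\wF_1$ is meromorphic on $\revc$.
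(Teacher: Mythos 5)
Your proposal is correct and follows the same route as the paper, which simply asserts full faithfulness and deduces essential surjectivity from Corollary \ref{cor:regsing_to_cst_1}; you have merely filled in the routine verifications (the compatibility $\puip\circ\pi_1=\pi_1\circ\puip$ near $\widu$, the passage between germs at $1$ and germs at $\widu$ via the branch $\pi_1^{-1}$, and the descent of $\wF_1$ to an element of $GL_n\left(\mathbb{C}\left(\lbrace z-1\rbrace\right)\right)$). No gaps.
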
 

\begin{proof}
This functor is full and faithful. It is also essentially surjective according to Corollary \ref{cor:regsing_to_cst_1}.
\end{proof}

\begin{lem}\label{lem:isomP1/SauloyP0}
The category $\mathcal{P}^{(1)}$ and the category $\mathscr{P}^{(0)}$ introduced in \cite[Section 2.1.2]{Sau03} are isomorphic via the composition with $\wln$: 
$$\begin{array}{l}
 \mathscr{P}^{(0)} \rightarrow \mathcal{P}^{(1)} \\
 \begin{cases}
A_1  \leadsto A_1 \\
H  \leadsto  H\circ\wln.
\end{cases}
\end{array}$$
\end{lem}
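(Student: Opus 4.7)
The plan is to verify directly that the functor defined in the statement is a well-defined isomorphism by exhibiting its inverse, namely $\widetilde{R} \leadsto \widetilde{R} \circ \wln^{-1}$, and checking both are compatible with the difference structures. The entire argument rests on two properties of $\wln$ recorded earlier: it is a biholomorphism $\revc \to \mathbb{C}$ with $\wln(\widu) = 0$, and it intertwines the dynamics via $p\,\wln = \wln \circ \puip$ (equivalently $\wln^{-1}(pz) = \puip(\wln^{-1}(z))$).

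First I would check that objects are preserved trivially (the functor is the identity on the constant matrices $A_1 \in GL_n(\mathbb{C})$), then turn to morphisms. Given a morphism $H : A_1 \to B_1$ in $\mathscr{P}^{(0)}$, i.e. a matrix meromorphic at $0$ satisfying $H(pu)A_1 = B_1 H(u)$, I set $\widetilde{R} := H \circ \wln$. Meromorphy of $\widetilde{R}$ at $\widu$ follows from biholomorphy of $\wln$ at $\widu$ with $\wln(\widu)=0$. For the functional equation, for $\wz \in \revc$ one computes
\[
\puip(\widetilde{R})(\wz)\,A_1 = H\!\left(\wln(\puip(\wz))\right) A_1 = H\!\left(p\,\wln(\wz)\right) A_1 = B_1 H(\wln(\wz)) = B_1\,\widetilde{R}(\wz),
\]
using $\wln \circ \puip = p\,\wln$ in the second equality and the hypothesis on $H$ in the third. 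Hence $\widetilde{R}\in\ho_{\mathcal{P}^{(1)}}(A_1,B_1)$.

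Next I would define the candidate inverse functor on morphisms by $\widetilde{R} \leadsto \widetilde{R} \circ \wln^{-1}$ and run the analogous verification: if $\puip(\widetilde{R})A_1 = B_1\widetilde{R}$ on a neighborhood of $\widu$, then letting $H := \widetilde{R}\circ\wln^{-1}$ (meromorphic at $0$) one gets
\[
H(pz)\,A_1 = \widetilde{R}\!\left(\wln^{-1}(pz)\right) A_1 = \widetilde{R}\!\left(\puip(\wln^{-1}(z))\right) A_1 = \puip(\widetilde{R})(\wln^{-1}(z))\,A_1 = B_1 H(z),
\]
so $H$ is a morphism in $\mathscr{P}^{(0)}$ in the sense of \cite[2.1.2]{Sau03}.

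Finally, I would observe that both assignments preserve the identity morphism (the constant matrix $I_n$ is mapped to itself) and composition, because composition of morphisms in both categories is pointwise matrix multiplication and $(M_1 M_2)\circ \wln = (M_1\circ\wln)(M_2\circ\wln)$. Since the two functors are visibly inverse to each other on objects and on morphisms (as $\wln \circ \wln^{-1} = \mathrm{id}$ and $\wln^{-1}\circ\wln = \mathrm{id}$), they define an isomorphism of categories. There is no serious obstacle; the only point that requires care is the compatibility of the difference operators, which is exactly the content of $p\,\wln = \wln\circ\puip$ used twice above.
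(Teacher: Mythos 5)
Your verification is correct and is essentially the only natural argument: the paper states this lemma without proof, treating it as an immediate consequence of the biholomorphy of $\wln$ (with $\wln(\widu)=0$) and the intertwining relation $p\,\wln=\wln\circ\puip$, which are exactly the two facts you invoke. Your explicit check of the functional equations in both directions and of functoriality fills in what the paper leaves implicit, with no gaps.
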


According to \cite{Sau03}, the category $\mathscr{P}^{(0)}$ is a neutral Tannakian category over $\mathbb{C}$ so:

\begin{prop}
The category $\mathcal{P}^{(1)}$ is a neutral Tannakian category over $\mathbb{C}$ with the fibre functors
$$\begin{array}{rl}
\omega_{l,1}^{(\wc)}: & \mathcal{P}^{(1)} \rightarrow \vect^f_{\mathbb{C}}\\
 & \begin{cases}
A_1  \leadsto \mathbb{C}^{n} \quad\mbox{with}\quad n \quad\mbox{the rank of}\quad A_1 \\
\widetilde{R}  \leadsto  \widetilde{R}\left(\wc\right)
\end{cases}
\end{array}$$
for all $\wc\in\revc\setminus\lbrace\widu\rbrace$.
\end{prop}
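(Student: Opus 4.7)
The plan is to transport the Tannakian structure through the categorical isomorphism $\mathscr{P}^{(0)}\to\mathcal{P}^{(1)}$, $H\mapsto H\circ\wln$, provided by Lemma \ref{lem:isomP1/SauloyP0}. Since $\mathscr{P}^{(0)}$ is a neutral Tannakian category over $\mathbb{C}$ by \cite{Sau03}, every structural ingredient (bilinear tensor product, unit object, internal Hom, duals, kernels, cokernels, exactness) carries over to $\mathcal{P}^{(1)}$ through the isomorphism. Concretely, if $\otimes_S$, $\mathbf{1}_S$, $\underline{\ho}_S$ denote these on $\mathscr{P}^{(0)}$, then the corresponding structure on $\mathcal{P}^{(1)}$ is defined by declaring the isomorphism to be a strict tensor equivalence; no new verifications are needed because an equivalence of categories preserves (and reflects) all universal constructions.

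It then remains to show that, for each $\wc\in\revc\setminus\{\widu\}$, the functor $\omega_{l,1}^{(\wc)}$ is a fibre functor. The idea is to express it as the composition of the categorical isomorphism of Lemma \ref{lem:isomP1/SauloyP0} with an already-known fibre functor on $\mathscr{P}^{(0)}$. Indeed, writing $\mathscr{F}:\mathscr{P}^{(0)}\to\mathcal{P}^{(1)}$ for the isomorphism, we have, for every morphism $H$ in $\mathscr{P}^{(0)}$,
\[
\omega_{l,1}^{(\wc)}\bigl(\mathscr{F}(H)\bigr)=(H\circ\wln)(\wc)=H\bigl(\wln(\wc)\bigr).
\]
Since $\wln:\revc\to\mathbb{C}$ is a biholomorphism with $\wln(\widu)=0$, the hypothesis $\wc\neq\widu$ is equivalent to $\wln(\wc)\in\mathbb{C}^\star$. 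Hence $\omega_{l,1}^{(\wc)}\circ\mathscr{F}$ is the ``evaluation at $\wln(\wc)\in\mathbb{C}^\star$'' functor on $\mathscr{P}^{(0)}$, which is one of the fibre functors exhibited in \cite{Sau03}. Composing with $\mathscr{F}^{-1}$ (which is an equivalence and in particular exact, faithful and tensor-preserving) shows that $\omega_{l,1}^{(\wc)}$ is a fibre functor on $\mathcal{P}^{(1)}$.

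The only point requiring a small check is that the evaluation $\widetilde{R}(\wc)$ appearing in the definition of $\omega_{l,1}^{(\wc)}$ is well-defined for every $\wc\in\revc\setminus\{\widu\}$ and every morphism $\widetilde{R}$. This is exactly the content of the preceding proposition: morphisms in $\mathcal{P}^{(1)}$ have entries in $\mathbb{C}[\wln,\wln^{-1}]$, and such Laurent polynomials can be evaluated at any point of $\revc$ at which $\wln$ does not vanish, i.e.\ at any point other than $\widu$. I expect this to be the most delicate step to state carefully, but it is essentially bookkeeping: everything else follows formally from the categorical isomorphism and Sauloy's result.
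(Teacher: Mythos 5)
Your proposal is correct and follows exactly the route the paper takes: the paper deduces the proposition directly from Lemma \ref{lem:isomP1/SauloyP0} together with Sauloy's result that $\mathscr{P}^{(0)}$ is a neutral Tannakian category with evaluation fibre functors at points of $\mathbb{C}^\star$, and your identification $\omega_{l,1}^{(\wc)}\circ\mathscr{F}=$ ``evaluation at $\wln(\wc)\in\mathbb{C}^\star$'' is precisely the intended argument. You merely spell out the transport of structure in more detail than the paper, which states it in one line.
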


\subsubsection{Local Galois groupoid at \texorpdfstring{$1$}{1}}\label{sec:GaloisGroupoid_1}

\begin{defin}
The \emph{local Galois groupoid at $1$}, denoted by $G_1$, is a groupoid such that its objects are the 
$$\omega_{l,1}^{(\wc)},\quad \wc\in\revc\setminus\lbrace\widu\rbrace$$ 
and its morphisms are the elements of $$G_1\left(\omega_{l,1}^{(\wc)},\omega_{l,1}^{(\widetilde{d})}\right)=\iso^\otimes\left(\omega_{l,1}^{(\wc)},\omega_{l,1}^{(\widetilde{d})}\right).$$ 
\end{defin}

\begin{thm}
The group $\aut^\otimes\left(\omega_{l,1}^{(\wc)}\right)$ is a subgroup (and the set $\iso^\otimes\left(\omega_{l,1}^{(\wc)},\omega_{l,1}^{(\widetilde{d})}\right)$ a subset) of $\mathbb{Z}^\text{alg}$.
Moreover, with the identification of $\mathbb{Z}^\text{alg}$ with $\ho_{gr}\left(\mathbb{C}^\star,\mathbb{C}^\star\right)\times\mathbb{C}$ given in Proposition \ref{prop:grp_isom_Zalg}, for all $\wc,\widetilde{d}\in\revc \setminus \lbrace\widu\rbrace$, we have
$$\iso^\otimes\left(\omega_{l,1}^{(\wc)},\omega_{l,1}^{(\widetilde{d})}\right)=
\left\lbrace\left(\gamma,\lambda\right)\in \mathbb{Z}^{alg}\left| \gamma(p)\wln\left(\wc\right)=\wln\left(\widetilde{d}\right)\right.\right\rbrace.$$
\end{thm}

\begin{proof}
We use the notations introduced in Lemma \ref{lem:isomP1/SauloyP0}. The category $\mathscr{P}^{(0)}$ is a neutral Tannakian category over $\mathbb{C}$ with fibre functors denoted by $\omega_{z_0}^{(0)}$, $z_0\in\mathbb{C}^\star$ (see \cite[Proposition 2.1.3.3]{Sau03}). Since, by construction, 
$\omega_{l,1}^{(\wc)}\left(H\circ\wln\right)=\omega_{\wln\left(\wc\right)}^{(0)}(H)$, the result follows from \cite[Theorem 2.2.2.1]{Sau03}.
\end{proof}

The local Galois groupoid at $1$ is a transitive groupoid because there exists a morphism between each object. We take as local Galois group at $1$, also denoted by $G_1$, a group $G_1:=G_1\left(\omega_{1,l}^{(\wc)},\omega_{1,l}^{(\wc)}\right)$ for a $\wc\in\revc\setminus\lbrace\widu\rbrace$. 

\begin{cor}
\label{cor:local_Galois_1}
With the same identification, for $\wc\in\revc\setminus\widu$, 
the local Galois group at $1$ is 
$$G_1:=G_1\left(\omega_{l,1}^{(\wc)},\omega_{l,1}^{(\wc)}\right)=\left\lbrace\left(\gamma,\lambda\right)\in \mathbb{Z}^{alg}\mid \gamma(p)=1\right\rbrace.$$ 
Therefore,
$$G_1\left(\omega_{l,1}^{(\wc)},\omega_{l,1}^{(\wc)}\right)\simeq \ho_{gr}\left(\mathbb{C}^\star/p^\mathbb{Z},\mathbb{C}^\star\right)\times\mathbb{C}:=G_{1,s}\times G_{1,u}.$$
\end{cor}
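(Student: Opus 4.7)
The plan is to apply the preceding theorem with $\widetilde{d}=\wc$ and then unpack what the resulting constraint on $(\gamma,\lambda)$ means pro-algebraically.

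First, I would instantiate the theorem at $\widetilde{d}=\wc$. By definition of the groupoid, $G_1\bigl(\omega_{l,1}^{(\wc)},\omega_{l,1}^{(\wc)}\bigr)=\iso^\otimes\bigl(\omega_{l,1}^{(\wc)},\omega_{l,1}^{(\wc)}\bigr)$, which equals
\[
\bigl\{(\gamma,\lambda)\in\mathbb{Z}^{alg}\mid \gamma(p)\wln(\wc)=\wln(\wc)\bigr\}.
\]
Next I would observe that the hypothesis $\wc\in\revc\setminus\{\widu\}$ is exactly the condition $\wln(\wc)\neq 0$, since $\wln$ is a biholomorphism from $\revc$ to $\mathbb{C}$ sending $\widu=(1,0)$ to $0$. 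Dividing the equation $\gamma(p)\wln(\wc)=\wln(\wc)$ by the nonzero complex number $\wln(\wc)$ then yields the equivalent condition $\gamma(p)=1$, giving the first equality of the corollary.

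For the isomorphism $G_1\simeq \ho_{gr}\bigl(\mathbb{C}^\star/p^{\mathbb{Z}},\mathbb{C}^\star\bigr)\times\mathbb{C}$, I would invoke Proposition \ref{prop:grp_isom_Zalg}, which identifies $\mathbb{Z}^{alg}$ with $\ho_{gr}(\mathbb{C}^\star,\mathbb{C}^\star)\times\mathbb{C}$ via $(\gamma,\lambda)$. Under this identification, the constraint $\gamma(p)=1$ on the first factor is precisely the condition that the group homomorphism $\gamma:\mathbb{C}^\star\to\mathbb{C}^\star$ is trivial on the cyclic subgroup $p^{\mathbb{Z}}\subset\mathbb{C}^\star$, i.e.\ that $\gamma$ factors through the quotient group $\mathbb{C}^\star/p^{\mathbb{Z}}$. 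The second factor $\lambda\in\mathbb{C}$ is left unconstrained. Passing to the quotient therefore yields the claimed product decomposition $G_{1,s}\times G_{1,u}$ with $G_{1,s}=\ho_{gr}(\mathbb{C}^\star/p^{\mathbb{Z}},\mathbb{C}^\star)$ and $G_{1,u}=\mathbb{C}$.

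There is no real obstacle here: the statement is a direct corollary of the previous theorem combined with the explicit description of $\mathbb{Z}^{alg}$ in Proposition \ref{prop:grp_isom_Zalg}. The only minor point to keep in mind is that the equivalence $\gamma(p)\wln(\wc)=\wln(\wc)\iff \gamma(p)=1$ genuinely requires $\wc\neq\widu$, which is guaranteed by the standing assumption on $\wc$; this is why the base point of the loop group must be taken in $\revc\setminus\{\widu\}$ rather than at $\widu$ itself.
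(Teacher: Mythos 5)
Your proposal is correct and follows exactly the route the paper intends: the corollary is the specialization $\widetilde{d}=\wc$ of the preceding theorem, with the observation that $\wln(\wc)\neq 0$ (since $\wln$ is a biholomorphism sending $\widu$ to $0$) turning the constraint $\gamma(p)\wln(\wc)=\wln(\wc)$ into $\gamma(p)=1$, and the identification of $\{\gamma\mid\gamma(p)=1\}$ with $\ho_{gr}\left(\mathbb{C}^\star/p^{\mathbb{Z}},\mathbb{C}^\star\right)$. The paper leaves this argument implicit, and your write-up supplies precisely the missing details without deviating from its approach.
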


The local Galois group at $1$ obtained is the same group as the one obtained by Sauloy in \cite[\S 2.2.2.2]{Sau03}. Therefore, from \cite[Section 2.2.3]{Sau03}, we know a Zariski-dense subgroup of this group:

\begin{lem}[Elements of the unipotent component $G_{1,u}$ of $G_1$]\label{lem:unip_compo}
Let $A_1\in\obj\left(\mathcal{P}^{(1)}\right)$. Let $A_1=A_{1,u}A_{1,s}$ be its Dunford decomposition. The element $1\in\mathbb{C}$ which corresponds to $A_1\leadsto A_{1,u}\in\aut^\otimes\left(\omega_{l,1}^{(\wc)}\right)$ generates a Zariski-dense subgroup of the unipotent component $G_{1,u}$.
\end{lem}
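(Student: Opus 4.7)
The strategy is to transport the problem to Sauloy's framework via Lemma \ref{lem:isomP1/SauloyP0}, and then rely on the analogous statement already available in \cite[2.2.3]{Sau03}.

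First I would observe that the isomorphism of categories
$$\mathscr{P}^{(0)} \longrightarrow \mathcal{P}^{(1)},\quad H \leadsto H\circ\wln,$$
preserves objects literally (both sides have objects $A_1\in GL_n(\mathbb{C})$) and carries a Dunford decomposition $A_1 = A_{1,u}A_{1,s}$ to itself. Under this isomorphism, the fibre functor $\omega_{l,1}^{(\wc)}$ corresponds to evaluation at $\wln(\wc)\in\mathbb{C}$ in Sauloy's notation; in particular it identifies $\aut^\otimes(\omega_{l,1}^{(\wc)})$ with the corresponding tensor automorphism group in $\mathscr{P}^{(0)}$. Under the parametrization of Proposition \ref{prop:grp_isom_Zalg}, an element of $\aut^\otimes(\omega_{l,1}^{(\wc)})$ is a pair $(\gamma,\lambda)$ with $\gamma(p)=1$, acting on an object $A_1 = A_{1,u}A_{1,s}$ by $A_1 \leadsto A_{1,u}^\lambda\gamma(A_{1,s})$. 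In particular the automorphism $A_1\leadsto A_{1,u}$ corresponds to the pair $(\mathbf{1},1)$, i.e.\ to the element $\lambda=1$ of the unipotent factor $G_{1,u}$ of $G_1 \simeq G_{1,s}\times G_{1,u}$.

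Next I would invoke the identification of $G_{1,u}$ with the additive pro-algebraic group $\mathbb{C} = \mathbb{G}_a(\mathbb{C})$ (the second factor in the description $G_1 \simeq \ho_{gr}(\mathbb{C}^\star/p^\mathbb{Z},\mathbb{C}^\star)\times\mathbb{C}$). The subgroup generated by $\lambda = 1$ is then $\mathbb{Z}\subset\mathbb{C}$, and it is Zariski-dense in $\mathbb{C}$ viewed as an affine algebraic variety (an infinite subset of $\mathbb{A}^1_{\mathbb{C}}$ is Zariski-dense). By the category isomorphism of Lemma \ref{lem:isomP1/SauloyP0}, the same conclusion holds for the element coming from $A_1\leadsto A_{1,u}$ in $\aut^\otimes(\omega_{l,1}^{(\wc)})$ and the unipotent component $G_{1,u}\subset G_1$.

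The main (and essentially only) point to check carefully is that the identification of Lemma \ref{lem:isomP1/SauloyP0} is compatible with the tensor structure on the two sides and thus with the Tannakian formalism, so that the decomposition $G_1 \simeq G_{1,s}\times G_{1,u}$ and the assignment $(\gamma,\lambda)\leftrightarrow A_1^{(\gamma,\lambda)}$ transport properly. Since the functor $H\leadsto H\circ\wln$ is $\mathbb{C}$-linear and commutes with the Kronecker tensor product (it only relabels the variable), this compatibility is immediate. Once this verification is made, the lemma reduces word-for-word to the corresponding statement in \cite[2.2.3]{Sau03}, so no further computation is needed.
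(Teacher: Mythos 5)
Your proposal is correct and follows exactly the route the paper takes: the paper gives no separate argument for this lemma, but simply deduces it from the identification of $\mathcal{P}^{(1)}$ with Sauloy's category $\mathscr{P}^{(0)}$ (Lemma \ref{lem:isomP1/SauloyP0}) and the corresponding density statement in \cite[2.2.3]{Sau03}. Your added verifications (that the element $A_1\leadsto A_{1,u}$ corresponds to $\lambda=1$ in the factor $G_{1,u}\simeq\mathbb{C}$, that $\mathbb{Z}$ is Zariski-dense in $\mathbb{G}_a$, and that the isomorphism of Lemma \ref{lem:isomP1/SauloyP0} respects the tensor structure) are exactly the points implicitly relied upon, so the proof is sound and essentially identical in approach.
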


\begin{notat}
Let $p\in\mathbb{N}_{\geq 2}$. We recall that all $z\in\mathbb{C}^\star$ can be uniquely written as $z=up^x$ where $u$ belongs to the unit circle and $x\in\mathbb{R}$. We define
$$
\begin{array}{rccl}
\gamma_1: & \mathbb{C}^\star & \rightarrow & \mathbb{C}^\star \\
            & up^x             & \mapsto  & u
\end{array}\quad\mbox{and}\quad
\begin{array}{rccl}
\gamma_2: & \mathbb{C}^\star & \rightarrow & \mathbb{C}^\star \\
            & up^x            & \mapsto     & e^{2i\pi x}.
\end{array}
$$
\end{notat}

\begin{lem}[Elements of the semi-simple component $G_{1,s}$ of $G_1$]
We denote by $$\overline{\gamma_1},\overline{\gamma_2}\in\ho_{gr}\left(\mathbb{C}^\star/p^\mathbb{Z},\mathbb{C}^\star\right)$$ the morphisms induced by $\gamma_1$ and $\gamma_2$ respectively. They generate a Zariski-dense subgroup of $G_{1,s}$.
\end{lem}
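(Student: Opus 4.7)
The plan is to import Sauloy's density statement \cite[2.2.3]{Sau03} through the isomorphism of Lemma \ref{lem:isomP1/SauloyP0}. Concretely, that lemma identifies $\mathcal{P}^{(1)}$ with Sauloy's category $\mathscr{P}^{(0)}$ (via composition with $\wln$), and this identification sends our $\overline{\gamma_1},\overline{\gamma_2}$ to the generators of the semi-simple component of Sauloy's local Galois group; so it is enough to sketch why these generate a Zariski-dense subgroup of $\ho_{gr}(\mathbb{C}^\star/p^\mathbb{Z},\mathbb{C}^\star)$.

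First, I would realize $G_{1,s} = \ho_{gr}(\mathbb{C}^\star/p^\mathbb{Z},\mathbb{C}^\star)$ as the projective limit, over the finitely generated subgroups $F \subset \mathbb{C}^\star/p^\mathbb{Z}$ ordered by inclusion, of the diagonalizable algebraic groups $T_F := \ho_{gr}(F,\mathbb{C}^\star)$, with restriction as transition maps. Since $\mathcal{O}(G_{1,s}) = \varinjlim \mathcal{O}(T_F)$, a subgroup $H$ of $G_{1,s}$ is Zariski-dense if and only if its image in every $T_F$ is Zariski-dense in $T_F$. Because $T_F$ is diagonalizable with character group $X^*(T_F)=F$, Zariski-density of a subgroup of $T_F$ is in turn equivalent to triviality of its annihilator in $F$. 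Combining these two reductions, the density of $\langle\overline{\gamma_1},\overline{\gamma_2}\rangle$ in $G_{1,s}$ amounts to the following pointwise statement: for every $c\in\mathbb{C}^\star/p^\mathbb{Z}$, if $\overline{\gamma_1}(c)=\overline{\gamma_2}(c)=1$, then $c$ is the trivial class.

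To check this, I would just unfold the definitions. Writing $c=up^x$ with $u$ on the unit circle and $x\in\mathbb{R}/\mathbb{Z}$, we have $\overline{\gamma_1}(c)=u$ and $\overline{\gamma_2}(c)=e^{2i\pi x}$; the conditions $u=1$ and $e^{2i\pi x}=1$ force $x\in\mathbb{Z}$, so $c=p^x\in p^{\mathbb{Z}}$, which is trivial in $\mathbb{C}^\star/p^\mathbb{Z}$, as required. The only step that is not entirely routine is the first one, namely pinning down the pro-algebraic structure of $G_{1,s}$ and transporting the Zariski-density criterion across the projective limit; this is precisely the formal setup carried out by Sauloy in \cite[2.2.3]{Sau03}, and it transfers to our situation via Lemma \ref{lem:isomP1/SauloyP0}.
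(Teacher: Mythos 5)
Your proposal is correct and follows the same route as the paper, which in fact gives no proof at all here: it simply notes that $G_1$ coincides with Sauloy's local Galois group and imports the Zariski-dense subgroup from \cite[2.2.3]{Sau03}. Your expansion — realizing $G_{1,s}=\ho_{gr}\left(\mathbb{C}^\star/p^\mathbb{Z},\mathbb{C}^\star\right)$ as a pro-diagonalizable group, reducing density to the triviality of the common annihilator, and checking that $\overline{\gamma_1}(c)=\overline{\gamma_2}(c)=1$ forces $c\in p^{\mathbb{Z}}$ — is a faithful and correct account of the argument the paper delegates to Sauloy.
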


\begin{thm}
The subgroup of $\ho_{gr}\left(\mathbb{C}^\star/p^\mathbb{Z},\mathbb{C}^\star\right)\times\mathbb{C}$ whose unipotent component is $\mathbb{Z}\subset\mathbb{C}$ and whose semi-simple component is generated by $\overline{\gamma_1}$ and $\overline{\gamma_2}$ is Zariski-dense in $G_1$, the local Galois group at $1$. 
\end{thm}

\section{Global Categories}\label{part4}

\subsection{The category \texorpdfstring{$\mathcal{E}_{rs}$}{Ers} of regular singular Mahler systems at \texorpdfstring{$0,1$}{0,1} and \texorpdfstring{$\infty$}{∞}}\label{sec:E_rs_tannak}\leavevmode\par
We recall that $\mathcal{E}_{rs}$ is the full subcategory of $\mathcal{E}$ (introduced in Section \ref{sec:catMahler_E,E^i}) whose objects are the matrices $A\in \GL_n(\mathbb{C}(z))$ such that the system $\puip\left(Y\right)=AY$ is regular singular at $0$, $1$ and $\infty$ (see Section \ref{sec:catMahler_E_rs}).
We will show that $\mathcal{E}_{rs}$ is a neutral Tannakian category over $\mathbb{C}$. Since the inclusion $\mathcal{E}_{sf}\rightarrow\mathcal{E}_{rs}$ is an equivalence of categories (see Proposition \ref{prop:equiv:Ers,Esf}), it will also provide $\mathcal{E}_{sf}$ with a structure of neutral Tannakian category over $\mathbb{C}$.

\begin{prop}
The category $\mathcal{E}_{rs}$ is a neutral Tannakian category over $\mathbb{C}$.
\end{prop}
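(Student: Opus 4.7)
The plan is to mimic the proof of Proposition \ref{prop:E_Tannakian}: since $\mathcal{E}_{rs}$ is a full subcategory of the Tannakian category $\mathcal{E}$, it is enough to check that $\mathcal{E}_{rs}$ is stable under the rigid tensor operations (unit object, tensor product, dual, internal Hom) and possesses kernels and cokernels. The restriction of the forgetful functor from $\mathcal{E}$ will then provide a fibre functor $\mathcal{E}_{rs}\to\vect^f_{\mathbb{C}}$.

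First I would handle closure under the tensor structure by a purely local argument at each of the three points. The unit object has constant coefficients, hence trivially belongs to $\mathcal{E}_{rs}$. Given two objects $A$ and $B$ of $\mathcal{E}_{rs}$, Corollaries \ref{cor:regsing_to_cst_0}, \ref{cor:regsing_to_cst_1} (applied with the principal logarithm in place of $\wln$) and \ref{cor:regsing_to_cst_infty} produce, for each $x \in \lbrace 0, 1, \infty\rbrace$, local gauge transformations $F_x^A, F_x^B$ conjugating $A$ and $B$ respectively to systems $A_x, B_x$ with constant entries. The Kronecker products $F_x^A \otimes F_x^B$ then conjugate $A \otimes B$ to the constant system $A_x \otimes B_x$ locally at $x$, and similarly $({}^t F_x^A)^{-1}$ conjugates the dual $A^\vee$ to the constant system $A_x^\vee$. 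This shows that $A \otimes B$, $A^\vee$, and $\underline{\mathrm{Hom}}(A, B)$ are all regular singular at each of the three points, hence lie in $\mathcal{E}_{rs}$.

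For kernels, I would reuse the explicit construction from the proof of Proposition \ref{prop:E_Tannakian}: given a morphism $R : A \to B$ in $\mathcal{E}_{rs}$, there exist $n_0 \in \mathbb{N}$ and a kernel $(C, K)$ of $R$ in the larger Tannakian category $\mathcal{E}(K_{p^\infty})$ such that $\left(\puip^{n_0}(C),\, F_A\,\puip^{n_0}(K)\right)$ is a kernel of $R$ in $\mathcal{E}$. The point is to verify that $\puip^{n_0}(C)$ actually belongs to $\mathcal{E}_{rs}$, i.e.\ is regular singular at each $x \in \lbrace 0, 1, \infty\rbrace$. This reduces, locally at each $x$, to the statement that a sub-object in the ambient category $\mathcal{E}^{(x)}$ of an object of $\mathcal{E}_{rs}^{(x)}$ is again in $\mathcal{E}_{rs}^{(x)}$. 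Cokernels then follow by applying the kernel construction to duals, using the closure under duality already established.

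The main obstacle is the local stability under sub-objects. At $x=0$ and $x=\infty$, I would use the corresponding corollary to reduce to the case of a system with constant entries, and then argue as in the classical theory of difference equations that any sub-object of a constant system is equivalent to a constant system. At $x=1$, after the change of variables $z=\exp(u)$, the problem transports to $q$-difference equations and one invokes the analogous statement for the category $\mathscr{P}^{(0)}$ of \cite{thesesauloy}, which is isomorphic to $\mathcal{P}^{(1)}$ by Lemma \ref{lem:isomP1/SauloyP0}. Once these local facts are in place, all the required properties of $\mathcal{E}_{rs}$ follow and one concludes that $\mathcal{E}_{rs}$ is a Tannakian category over $\mathbb{C}$.
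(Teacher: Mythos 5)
Your proposal is correct and follows essentially the same route as the paper: exhibit $\mathcal{E}_{rs}$ as a rigid tensor subcategory of the Tannakian category $\mathcal{E}$, reduce the remaining issue to the existence of kernels (cokernels following by duality), and verify that the kernel is regular singular by localizing at each of $0$, $1$, $\infty$ and using that the local categories are equivalent to categories of constant systems. The only cosmetic difference is in the last local step: where you argue directly that a sub-object of a constant system is again constant, the paper compares the kernel computed in $\mathcal{E}$ with the kernel computed in $\mathcal{E}_{rs}^{(x)}$ (which exists since $\mathcal{E}_{rs}^{(x)}\simeq\mathcal{P}^{(x)}\simeq\mathcal{R}$ is Tannakian) and invokes uniqueness of kernels in $\mathcal{E}^{(x)}$ — the same idea packaged slightly differently.
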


\begin{proof}
From Section \ref{sec:catMahler_E,E^i}, we know that $\mathcal{E}$ is a neutral Tannakian category over $\mathbb{C}$.
The category $\mathcal{E}_{rs}$ is a rigid tensor subcategory of $\mathcal{E}$ because it is closed under tensor operations, it contains the unit object $\mathbf{1}=1\in\mathbb{C}^\star$, the internal Hom and every object has a dual. To show that it is a Tannakian subcategory of $\mathcal{E}$, the only nontrivial point which remains to be proved is the existence of kernels and cokernels in $\mathcal{E}_{rs}$. This follows from the lemma herebelow and the fact that duality in the Tannakian category $\mathcal{E}$ exchanges kernels with cokernels.
\end{proof}

\begin{lem}\label{lem:kernels_in_E_rs}
All morphisms in the category $\mathcal{E}_{rs}$ have kernels.
\end{lem}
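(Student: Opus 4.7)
The plan is to verify that the kernel $(C, K: C \to A)$ of $R: A \to B$ supplied by the ambient Tannakian category $\mathcal{E}$ (via Proposition \ref{prop:E_Tannakian}) is already regular singular at each of the three points $0$, $1$, and $\infty$; this will put $C$ in $\mathcal{E}_{rs}$. The verifications at the three points are carried out separately, each via the local analytic trivialization that reduces the system to a constant one.

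I give the argument at $0$ in detail; the case $\infty$ is symmetric. By Corollary \ref{cor:regsing_to_cst_0}, there exist matrices $F_A, F_B$ with entries in $\mathcal{M}(D(0,1)) \subset \mathbb{C}(\{z\})$ trivializing $A, B$ to constant matrices $A_0, B_0$. Then $R_0 := F_B^{-1} R F_A$ is a morphism $A_0 \to B_0$ in $\mathcal{E}^{(0)}$ between constant objects, so by Lemma \ref{lem:morph_cst_syst_0} it has constant entries. The $A_0$-stable subspace $V := \ker R_0 \subset \mathbb{C}^{n_1}$ together with its inclusion $K_0 : (V, A_0\vert_V) \hookrightarrow A_0$ defines a constant subsystem, and I claim this is a kernel of $R_0$ not only in $\mathcal{P}^{(0)}$ but already in the larger category $\mathcal{E}^{(0)}$: given any $K' : C' \to A_0$ in $\mathcal{E}^{(0)}$ satisfying $R_0 K' = 0$, the constancy of $R_0$ forces each column of $K'$ into $V$, whence $K' = K_0 M$ with $M \in \mathcal{M}_{m,m'}(\mathbb{C}(\{z\}))$ unique; the equation $\puip(K') C' = A_0 K'$ together with the injectivity of $K_0$ then yields $\puip(M) C' = (A_0\vert_V) M$, exhibiting $M$ as a morphism $C' \to (V, A_0\vert_V)$ in $\mathcal{E}^{(0)}$. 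Since $\mathbb{C}(z) \hookrightarrow \mathbb{C}(\{z\})$ is a flat field extension, the natural functor $\mathcal{E} \to \mathcal{E}^{(0)}$ preserves kernels; by uniqueness, $C$ is isomorphic in $\mathcal{E}^{(0)}$ to the constant system $(V, A_0\vert_V)$, so by the remark following Corollary \ref{cor:regsing_to_cst_0}, $C$ is regular singular at $0$.

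At $1$, the same global strategy applies, but with a genuine extra difficulty: Lemma \ref{lem:morph_cst_syst_1} only guarantees that the transported morphism between the constant trivializations has entries in $\mathbb{C}[\wln, \wln^{-1}]$ rather than in $\mathbb{C}$, so the direct columnwise factorization used at $0$ no longer applies. This is the principal obstacle I anticipate. I would circumvent it by working through the equivalence $\mathscr{G}: \mathcal{P}^{(1)} \to \mathcal{E}_{rs}^{(1)}$ and Lemma \ref{lem:isomP1/SauloyP0}, which identifies $\mathcal{P}^{(1)}$ with Sauloy's neutral Tannakian category $\mathscr{P}^{(0)}$ of constant $q$-difference systems: in that category the kernel of the transported morphism exists as a constant object, and the same flatness argument as at $0$ matches it with $(C, K)$ viewed in $\mathcal{E}^{(1)}$, giving that $C$ is regular singular at $1$ as well. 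Combining the three cases, $C \in \mathcal{E}_{rs}$, completing the proof.
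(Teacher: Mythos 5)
Your proposal is correct and follows essentially the same route as the paper: both take the kernel $(C,K)$ supplied by the ambient category $\mathcal{E}$ and show that $C$ is regular singular at each of $0,1,\infty$ by comparing it, via uniqueness of kernels in $\mathcal{E}^{(i)}$, with a local kernel realized as a constant (hence regular singular) system. The only difference is presentational: the paper obtains the local kernel abstractly from the Tannakian structure of $\mathcal{E}_{rs}^{(i)}\simeq\mathcal{P}^{(i)}$ at all three points, whereas you construct it explicitly at $0$ and $\infty$ and invoke the abstract argument only at $1$.
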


\begin{proof}
We recall that the categories $\mathcal{E}$, $\mathcal{E}^{(0)}$ (introduced in Section \ref{sec:catMahler_E,E^i}) and $\mathcal{E}_{rs}^{(0)}$ (introduced in Section \ref{sec:catMahler_E_rs0,P0}) are Tannakian categories. We consider the following embeddings of categories
$$\xymatrix{\mathcal{E}_{rs} \ar@{^{(}->}[r] \ar@{^{(}->}[d] & \mathcal{E} \ar@{^{(}->}[d]  \\
\mathcal{E}_{rs}^{(0)} \ar@{^{(}->}[r] & \mathcal{E}^{(0)}\, .}$$
Let $R\in\mathcal{M}_{n_2,n_1}\left(\mathbb{C}(z)\right)$ be a morphism in the category $\mathcal{E}_{rs}$ from the object $A\in \GL_{n_1}\left(\mathbb{C}(z)\right)$ to the object $B\in \GL_{n_2}\left(\mathbb{C}(z)\right)$. We know that the morphism $R$ has a kernel in the category $\mathcal{E}$, we denote it by
\begin{equation}\label{eq:kernel_E}
\left(C, K : C\rightarrow A\right).
\end{equation}
To show that this kernel is also a kernel of $R$ in the category $\mathcal{E}_{rs}$ it remains to prove that the system $\puip(Y)=CY$ is regular singular at $0$, $1$ and $\infty$. The morphism $R$ has also a kernel in the category $\mathcal{E}_{rs}^{(0)}$, we denote it by
\begin{equation}\label{eq:kernel_E_rs0}
\left(C_0, K_0 : C_0\rightarrow A\right)
\end{equation}
so the system $\puip(Y)=C_0Y$ is regular singular at $0$. The kernels \eqref{eq:kernel_E} and \eqref{eq:kernel_E_rs0} are kernels of the morphism $R$ in the category $\mathcal{E}^{(0)}$ so they are isomorphic that is there exists an invertible morphism $u_1: C_0\rightarrow C$ in the category $\mathcal{E}^{(0)}$ such that $Ku_1=K$. Since $u_1\in\ho_{\mathcal{E}^{(0)}}\left(C_0,C\right)$, the entries of $u_1$ are meromorphic functions at $0$ and $u_1$ satisfies 
$$\puip\left(u_1\right)C_0=Cu_1$$
thus the system $\puip(Y)=CY$ is also regular singular at $0$. Similarly, we can show that the system $\puip(Y)=CY$ is regular singular at $1$ and at $\infty$ so $C\in\obj\left(\mathcal{E}_{rs}\right)$ and $\left(C, K : C\rightarrow A\right)$ is also a kernel of $R$ in the category $\mathcal{E}_{rs}$. 
\end{proof}

We introduce in Section \ref{sec:cat_connections} and Section \ref{sec:catS} two categories, respectively the category $\mathcal{C}_{rs}$ of connection data and the category $\mathcal{S}_{rs}$ of solutions. 
We want to prove that $\mathcal{C}_{rs}$ and $\mathcal{E}_{rs}$ are equivalent. The category $\mathcal{S}_{rs}$ only plays an intermediate role. More precisely, we will see that $\mathcal{C}_{rs}$ and $\mathcal{E}_{rs}$ are equivalent via two functors, which are equivalence of categories, from $\mathcal{S}_{rs}$ to respectively $\mathcal{C}_{rs}$ and $\mathcal{E}_{rs}$. Thanks to $\mathcal{S}_{rs}$, we have well-defined functors which are independent of any choice (choice of a solution associated with a system, choice of a decomposition of connection matrices).

\subsection{The category \texorpdfstring{$\mathcal{C}_{rs}$}{Crs} of connection data}\label{sec:cat_connections}\leavevmode\par

We define $$\sigz:=\left\lbrace \left(re^{ib},b\right)\mid 0<r<1, b\in\mathbb{R}\right\rbrace\subset\revc$$ and 
$$\sigi:=\left\lbrace \left(re^{ib},b\right)\mid r>1, b\in\mathbb{R}\right\rbrace\subset\revc.$$ 

We introduce the category $\mathcal{C}_{rs}$ of connection data. Its objects are the
$$(A_0,A_1,A_\infty,\wMzero ,\wM_\infty)\in \GL_n(\mathbb{C})^3\times  \GL_n\left(\mathcal{M}\left(\sigz\right)\right)\times \GL_n\left(\mathcal{M}\left(\sigi\right)\right)$$ such that
$$\left\lbrace
 \begin{array}{l}
\puip\left(\wMzero\right)=A_1\wMzero A_0^{-1} \\
\puip\left(\wM_\infty\right)=A_1\wM_\infty A_\infty^{-1}
    \end{array}
    \right.$$
and such that there exist
\begin{center} 
 $\wW_1\in \GL_n\left(\mathcal{M}\left(\revc\right)\right)$, $W_0\in \GL_n\left(\mathcal{M}\left(D(0,1)\right)\right)$, $W_\infty\in \GL_n\left(\mathcal{M}\left(\mathbb{P}^1\left(\mathbb{C}\right)\setminus\overline{D}(0,1)\right)\right)$
\end{center}
such that
 $$\left\lbrace
 \begin{array}{l}
\wW_1\wMzero=\pi^\star W_0  \\
\wW_1\wM_\infty=\pi^\star W_\infty. 
    \end{array}
    \right.$$
The integer $n$ is called the \emph{rank} of the object.

Morphisms from $(A_0,A_1,A_\infty,\wMzero,\wM_\infty)\in\obj\left(\mathcal{C}_{rs}\right)$ to $(B_0,B_1,B_\infty,\wNzero,\wN_\infty)\in\obj\left(\mathcal{C}_{rs}\right)$, of rank $n_1$ and $n_2$ respectively, are the triples $$\left(S_0,\wS_1,S_\infty\right)\in\mathcal{M}_{n_2,n_1}(\mathbb{C})\times\mathcal{M}_{n_2,n_1}\left(\mathbb{C}\left[\wln,\wln^{-1}\right]\right)\times\mathcal{M}_{n_2,n_1}(\mathbb{C})$$ such that 
 $$\left\lbrace
\begin{array}{l}
S_0A_0=B_0S_0 \\
\puip\left(\wS_1\right)A_1=B_1\wS_1 \\
S_\infty A_\infty =B_\infty S_\infty \\
\wS_1\wMzero=\wNzero S_0 \\
\wS_1\wM_\infty=\wN_\infty S_\infty .
\end{array}
    \right.$$ 
    
\begin{rem}
From Lemmas \ref{lem:morph_cst_syst_0}, \ref{lem:morph_cst_syst_infty} and \ref{lem:morph_cst_syst_1}, a triple $\left(S_0,\wS_1,S_\infty\right)$ is a morphism of the category $\mathcal{C}_{rs}$ if and only if it satisfies the above system and the entries of $S_0$, $\wS_1$ and $S_\infty$ are meromorphic functions at $0$, $\widu$ and $\infty$ respectively.
\end{rem}

\subsection{The category \texorpdfstring{$\mathcal{S}_{rs}$}{Srs} of solutions}\label{sec:catS}\leavevmode\par

The objects of the category $\mathcal{S}_{rs}$ of solutions are the 
$(A_0,A_1,A_\infty,F_0,\wF_1,F_\infty)$ 
such that 
$$ A_0, A_1, A_\infty \in \GL_n(\mathbb{C}),$$
$$F_0\in \GL_n\left(\mathcal{M}\left(D(0,1)\right)\right),$$
$$\wF_1\in \GL_n\left(\mathcal{M}\left(\revc\right)\right),$$
$$F_\infty\in \GL_n\left(\mathcal{M}\left(\mathbb{P}^1\left(\mathbb{C}\right)\setminus\overline{D}(0,1)\right)\right)$$
and such that if we define 
$$
A := \left\lbrace\begin{array}{ll}
\puip(F_0)A_0F_0^{-1} & \mbox{on}\quad D(0,1) \\
\puip(F_\infty)A_\infty F_\infty^{-1} & \mbox{on}\quad \mathbb{P}^1\left(\mathbb{C}\right)\setminus\overline{D}(0,1)
\end{array}\right.
$$
and $\widetilde{A} := \puip(\wF_1)A_1\wF_1^{-1}$ on $\revc$ then $$\pi^\star A=\widetilde{A}.$$
The integer $n$ is called the \emph{rank} of the object.

Morphisms from $(A_0,A_1,A_\infty,F_0,\wF_1,F_\infty)$ to $(B_0,B_1,B_\infty,G_0,\wG_1,G_\infty)$, two objects of $\mathcal{S}_{rs}$ of rank $n_1$ and $n_2$ respectively, are the $$(R,S_0,\wS_1,S_\infty)\in\mathcal{M}_{n_2,n_1}(\mathbb{C}(z))\times\mathcal{M}_{n_2,n_1}(\mathbb{C})\times\mathcal{M}_{n_2,n_1}\left(\mathbb{C}\left[\wln,\wln^{-1}\right]\right)\times\mathcal{M}_{n_2,n_1}(\mathbb{C})$$ such that 
 $$\left\lbrace
\begin{array}{l}
S_0A_0=B_0S_0\\
\puip\left(\wS_1\right)A_1=B_1\wS_1\\
S_\infty A_\infty =B_\infty S_\infty\\
RF_i=G_iS_i\quad\textnormal{for}\quad i=0,\infty\\
\wR\wF_1=\wG_1\wS_1
\end{array}
    \right.$$
where we define $\wR:=\pi^\star R$.

\subsection{Equivalence between the categories}\label{sec:equivalence_cat}\leavevmode\par
Our goal is to obtain an equivalence between the categories $\mathcal{C}_{rs}$ and $\mathcal{E}_{rs}$.

\begin{prop}\label{prop:equivS/E_rs}
The categories $\mathcal{S}_{rs}$ and $\mathcal{E}_{rs}$ are equivalent by the functor $$\begin{array}{rl}
\mathscr{F}: & \mathcal{S}_{rs} \rightarrow \mathcal{E}_{rs} \\
 & \begin{cases}
(A_0,A_1,A_\infty,F_0,\wF_1,F_\infty)  \leadsto A \quad\mbox{defined in the category}\, \mathcal{S}_{rs}\quad(\mbox{in Section \ref{sec:catS}})\\
  \left(R,S_0,\wS_1,S_\infty\right) \leadsto R .
\end{cases}
\end{array}$$
\end{prop}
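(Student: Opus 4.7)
The plan is to verify the four standard conditions for $\mathscr{F}$ to be an equivalence of categories: (i) well-definedness on objects and morphisms, (ii) faithfulness, (iii) fullness, and (iv) essential surjectivity.

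First I would check that $\mathscr{F}$ is well-defined on objects. From an object $(A_0,A_1,A_\infty,F_0,\wF_1,F_\infty)\in\obj(\mathcal{S})$, the associated $A$ is specified by $A=\puip(F_0)A_0F_0^{-1}\in\mathcal{M}(D(0,1))$, $A=\puip(F_\infty)A_\infty F_\infty^{-1}\in\mathcal{M}(\mathbb{P}^1(\mathbb{C})\setminus\overline{D}(0,1))$, and $\pi^\star A=\puip(\wF_1)A_1\wF_1^{-1}\in\mathcal{M}(\revc)$, so $A$ lies in the triple intersection of Lemma \ref{lem:merom_funct_inter} and is therefore in $\mathbb{C}(z)=\mathcal{M}(\mathbb{P}^1(\mathbb{C}))$. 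The existence of the gauges $F_0$, $\wF_1$, $F_\infty$ makes the system $\puip(Y)=AY$ regular singular at $0$, $1$ and $\infty$ by the very definitions, so $A\in\obj(\mathcal{E}_{rs})$. On morphisms, for $(R,S_0,\wS_1,S_\infty)$ one computes
\[
\puip(R)A=\puip(R)\puip(F_0)A_0F_0^{-1}=\puip(RF_0)A_0F_0^{-1}=\puip(G_0S_0)A_0F_0^{-1},
\]
and since $S_0$ is constant and $\puip(G_0)B_0=BG_0$, this rewrites as $B\,G_0S_0F_0^{-1}=BR$, so $R\in\mr_{\mathcal{E}_{rs}}(A,B)$.

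Faithfulness is immediate from the relations $RF_i=G_iS_i$ and $\wR\wF_1=\wG_1\wS_1$, which determine $S_0$, $S_\infty$, $\wS_1$ uniquely from $R$ because $F_0,F_\infty,\wF_1,G_0,G_\infty,\wG_1$ are invertible. For fullness, given $R\in\mr_{\mathcal{E}_{rs}}(A,B)$, I would define
\[
S_0:=G_0^{-1}RF_0,\qquad \wS_1:=\wG_1^{-1}\wR\,\wF_1,\qquad S_\infty:=G_\infty^{-1}RF_\infty,
\]
which are \emph{a priori} meromorphic on $D(0,1)$, on $\revc$, and on $\mathbb{P}^1(\mathbb{C})\setminus\overline{D}(0,1)$ respectively. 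A direct computation using $\puip(R)A=BR$, $A=\puip(F_0)A_0F_0^{-1}$ and $B=\puip(G_0)B_0G_0^{-1}$ yields $\puip(S_0)A_0=B_0S_0$; analogously $\puip(S_\infty)A_\infty=B_\infty S_\infty$ and $\puip(\wS_1)A_1=B_1\wS_1$. Then Lemmas \ref{lem:morph_cst_syst_0}, \ref{lem:morph_cst_syst_infty} and \ref{lem:morph_cst_syst_1} upgrade these three matrices to, respectively, constants, constants, and Laurent polynomials in $\wln$, which is exactly what the definition of a morphism in $\mathcal{S}$ requires.

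Essential surjectivity is precisely the content of the three existence results at the three points. Given $A\in\obj(\mathcal{E}_{rs})$, Corollaries \ref{cor:regsing_to_cst_0}, \ref{cor:regsing_to_cst_1} and \ref{cor:regsing_to_cst_infty} furnish $F_0\in GL_n(\mathcal{M}(D(0,1)))$, $\wF_1\in GL_n(\mathcal{M}(\revc))$, $F_\infty\in GL_n(\mathcal{M}(\mathbb{P}^1(\mathbb{C})\setminus\overline{D}(0,1)))$ and $A_0,A_1,A_\infty\in GL_n(\mathbb{C})$ such that $\puip(F_0)^{-1}AF_0=A_0$, $\puip(\wF_1)^{-1}(\pi^\star A)\wF_1=A_1$ and $\puip(F_\infty)^{-1}AF_\infty=A_\infty$. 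The tuple $(A_0,A_1,A_\infty,F_0,\wF_1,F_\infty)$ is an object of $\mathcal{S}$ whose image under $\mathscr{F}$ is $A$.

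The only non-routine part of this plan is the fullness step, and within it the verification that the $S_i$ defined by the formulas above actually land in the required subspaces. The bookkeeping of the three functional equations is straightforward, but it is crucial that Lemmas \ref{lem:morph_cst_syst_0}, \ref{lem:morph_cst_syst_infty} and \ref{lem:morph_cst_syst_1} be available: without them one would only know $S_0,S_\infty,\wS_1$ as meromorphic matrices satisfying a system with constant coefficients, whereas the morphisms of $\mathcal{S}$ require them to be truly constant (resp.\ Laurent polynomial in $\wln$). With those lemmas in hand, the proof is a sequence of essentially formal verifications.
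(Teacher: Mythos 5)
Your proposal is correct and follows essentially the same route as the paper: well-definedness on objects via Lemma \ref{lem:merom_funct_inter}, essential surjectivity via Corollaries \ref{cor:regsing_to_cst_0}, \ref{cor:regsing_to_cst_1}, \ref{cor:regsing_to_cst_infty}, and full faithfulness by exhibiting $\left(R,G_0^{-1}RF_0,\wG_1^{-1}\wR\wF_1,G_\infty^{-1}RF_\infty\right)$ and invoking Lemmas \ref{lem:morph_cst_syst_0}, \ref{lem:morph_cst_syst_infty}, \ref{lem:morph_cst_syst_1} to land in the right coefficient rings. The only cosmetic difference is that you separate faithfulness from fullness where the paper treats them together.
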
 

\begin{proof}
We check that $\mathscr{F}$ is well-defined on objects. 
By the equalities $A=\puip\left(F_0\right)A_0F_0^{-1}$, $A=\puip\left(F_\infty\right)A_\infty F_\infty^{-1}$ and $\pi^\star A=\puip\left(\wF_1\right) A_1\wF_1^{-1}$, we obtain respectively that the entries of $\pi^\star A$ belong to $\pi^\star\mathcal{M}\left(D(0,1)\right)$, $\pi^\star\mathcal{M}\left(\mathbb{P}^1\left(\mathbb{C}\right)\setminus\overline{D}(0,1)\right)$ and $\mathcal{M}\left(\revc\right)$. By Lemma \ref{lem:merom_funct_inter}, $\pi^\star A\in \GL_n\left(\pcz\right)$ so $$A\in \GL_n\left(\mathbb{C}(z)\right).$$ Moreover, the system $\puip(Y)=AY$ is regular singular at $0,1,\infty$. The functor $\mathscr{F}$ is also well-defined on morphisms because $R\in\mathcal{M}_{n_2,n_1}(\mathbb{C}(z))$ and it satisfies 
$$\puip(R)A=\underbrace{\puip(R)\puip(F_0)}_{=\puip(G_0)S_0}A_0F_0^{-1}=\puip(G_0)B_0\underbrace{S_0F_0^{-1}}_{=G_0^{-1}R}=BR.$$
The functor $\mathscr{F}$ is essentially surjective because of Corollaries \ref{cor:regsing_to_cst_0}, \ref{cor:regsing_to_cst_infty}, \ref{cor:regsing_to_cst_1}. It is also fully faithful. Indeed, keeping the notations introduced in Section \ref{sec:catS}, if $R\in\ho_{\mathcal{E}_{rs}}(A,B)$ then $\left(R,G_0^{-1}RF_0,\wG_1^{-1}\wR\wF_1,G_\infty^{-1}RF_\infty\right)$ is the unique morphism of $\mathcal{S}_{rs}$ such that $$\mathscr{F}(R,S_0,\wS_1,S_\infty)=R$$ because it is uniquely determined and it satisfies 
\begin{itemize}
\item for $i=0,\infty$, $$\puip\left(S_i\right)A_i=\puip\left(G_i\right)^{-1}\puip\left(R\right)\puip\left(F_i\right)A_i=B_iG_i^{-1}\underbrace{B^{-1}\puip\left(R\right)A}_{=R}F_i=B_iS_i$$ where we used the fact that $\puip\left(G_i\right)=BG_iB_i^{-1}$ and $\puip\left(F_i\right)=AF_iA_i^{-1}$ for the second equality. By Lemmas \ref{lem:morph_cst_syst_0} and \ref{lem:morph_cst_syst_infty}, we obtain that $S_i$ has constant entries.
\item Similarly, $\puip\left(\wS_1\right)A_1=B_1\wS_1$ and by Lemma \ref{lem:morph_cst_syst_1}, the entries of $\wS_1$ are Laurent polynomials in $\wln$.
\end{itemize} 
\end{proof}

\begin{prop}
\label{prop:equiv:S/C}
The categories $\mathcal{S}_{rs}$ and $\mathcal{C}_{rs}$ are equivalent by the functor $$\begin{array}{rl}
\mathscr{G}: & \mathcal{S}_{rs} \rightarrow \mathcal{C}_{rs} \\
 & \begin{cases}
\left(A_0,A_1,A_\infty,F_0,\wF_1,F_\infty\right)  \leadsto \left(A_0,A_1,A_\infty,\wF_1^{-1}\pi^\star F_0,\wF_1^{-1}\pi^\star F_\infty\right)\\
  \left(R,S_0,\wS_1,S_\infty\right) \leadsto \left(S_0,\wS_1,S_\infty\right).
\end{cases}
\end{array}$$
\end{prop}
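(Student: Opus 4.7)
The plan is to verify in turn that $\mathscr{G}$ is well-defined on objects and morphisms, essentially surjective, and fully faithful. The main obstacle will be the last step, where a rational matrix must be reconstructed from purely local analytic data via Lemma \ref{lem:merom_funct_inter}.

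For objects, given $\alpha = (A_0,A_1,A_\infty,F_0,\wF_1,F_\infty) \in \obj(\mathcal{S})$, setting $\wMzero := \wF_1^{-1}\pi^\star F_0$ and $\wM_\infty := \wF_1^{-1}\pi^\star F_\infty$, I would combine the relations $\puip(F_i)A_i = AF_i$ pulled back by $\pi$ with $\puip(\wF_1)A_1 = \pi^\star A \cdot \wF_1$ to obtain the cocycles $\puip(\wMzero) = A_1\wMzero A_0^{-1}$ and $\puip(\wM_\infty) = A_1\wM_\infty A_\infty^{-1}$; the existence clause of $\mathcal{C}$ is witnessed by $(\wW_1,W_0,W_\infty) := (\wF_1,F_0,F_\infty)$ themselves. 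For morphisms, starting from $(R,S_0,\wS_1,S_\infty) \in \mr_{\mathcal{S}}(\alpha,\beta)$ the three conjugation relations $S_0A_0=B_0S_0$, $\puip(\wS_1)A_1 = B_1\wS_1$, $S_\infty A_\infty = B_\infty S_\infty$ pass over trivially, and the identity $\wS_1\wMzero = \wNzero S_0$ will follow from the one-line computation
\[\wS_1\wMzero = \wS_1\wF_1^{-1}\pi^\star F_0 = \wG_1^{-1}\wR\cdot\pi^\star F_0 = \wG_1^{-1}\pi^\star(G_0S_0) = \wNzero S_0,\]
using $\wR\wF_1 = \wG_1\wS_1$, $RF_0 = G_0S_0$ and the constancy of $S_0$; the $\infty$-identity is analogous.

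Essential surjectivity will be obtained by starting from $(A_0,A_1,A_\infty,\wMzero,\wM_\infty) \in \obj(\mathcal{C})$, picking the data $\wW_1, W_0, W_\infty$ supplied by the definition of $\mathcal{C}$, and setting $F_0 := W_0$, $\wF_1 := \wW_1$, $F_\infty := W_\infty$. Define $A_\circ := \puip(F_0)A_0F_0^{-1}$ on $D(0,1)$ and $A_\bullet := \puip(F_\infty)A_\infty F_\infty^{-1}$ on $\mathbb{P}^1(\mathbb{C})\setminus\overline{D}(0,1)$. Using $\pi^\star F_0 = \wF_1\wMzero$ together with the cocycle for $\wMzero$ (and the analogous identity at $\infty$), both $\pi^\star A_\circ$ and $\pi^\star A_\bullet$ are seen to equal $\puip(\wF_1)A_1\wF_1^{-1} \in GL_n(\mathcal{M}(\revc))$. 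The three-fold clause of Lemma \ref{lem:merom_funct_inter} then produces a single $A \in GL_n(\mathbb{C}(z))$ whose $\pi$-pullback equals this matrix and which restricts to $A_\circ, A_\bullet$ on the respective domains. Regular singularity at $0,1,\infty$ is immediate from the reductions to the constant matrices $A_0,A_1,A_\infty$, so $(A_0,A_1,A_\infty,F_0,\wF_1,F_\infty) \in \obj(\mathcal{S})$ and $\mathscr{G}$ sends it back to the given $\mathcal{C}$-object.

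The hard part is fully faithfulness. Given a morphism $(S_0,\wS_1,S_\infty):\mathscr{G}(\alpha)\to\mathscr{G}(\beta)$ in $\mathcal{C}$, the equation $RF_0 = G_0S_0$ forces the only candidate to be $R := G_0S_0F_0^{-1}$, a matrix which is a priori only meromorphic on $D(0,1)$. The plan is to compute, mirroring the morphism step, that $\wG_1\wS_1\wF_1^{-1} \in GL_n(\mathcal{M}(\revc))$ coincides with $\pi^\star(G_0S_0F_0^{-1})$ on $\sigz$ and with $\pi^\star(G_\infty S_\infty F_\infty^{-1})$ on $\sigi$; the three-fold clause of Lemma \ref{lem:merom_funct_inter} then pins down a unique $R \in \mathbb{C}(z)$ with $\pi^\star R = \wG_1\wS_1\wF_1^{-1}$, which by construction restricts to both local candidates. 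The identity $\puip(R)A = BR$ will drop out of the object relations together with $S_0A_0 = B_0S_0$, and uniqueness of the lift is automatic from the forcing equation. The core obstacle here is precisely this rationality step: the analytic glue supplied by the $\mathcal{C}$-morphism relations must be converted into a single global rational function, and Lemma \ref{lem:merom_funct_inter} is the critical ingredient that makes this conversion possible.
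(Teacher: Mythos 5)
Your proposal is correct and follows essentially the same route as the paper: the same cocycle verifications, the same witnesses $(\wW_1,W_0,W_\infty)$ for essential surjectivity, and the same reconstruction of the rational matrix $R=G_iS_iF_i^{-1}$ via Lemma \ref{lem:merom_funct_inter} for full faithfulness. You merely spell out some steps the paper leaves implicit (the well-definedness on objects and the gluing of $A_\circ$, $A_\bullet$ into a single rational $A$), which is a welcome but not substantively different elaboration.
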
 

\begin{proof}
We write $\wFi:=\pi^\star F_i$ for $i=0,\infty$ and $\wR:=\pi^\star R$. The functor $\mathscr{G}$ is clearly well-defined on objects. It is also well-defined on morphisms because if $$(R,S_0,\wS_1,S_\infty)\in\ho_{\mathcal{S}_{rs}}\left((A_0,A_1,A_\infty,F_0,\wF_1,F_\infty),(B_0,B_1,B_\infty,G_0,\wG_1,G_\infty) \right),$$ for $i=0,\infty$, introducing $\wMi=\wF_1^{-1}\wFi$ and $\wNi=\wG_1^{-1}\wGi$,
 $$\wS_1\wMi=\underbrace{\wS_1 \wF_1^{-1}}_{=\wG_1^{-1}\wR}\wFi=\wNi S_i.$$ 
The functor $\mathscr{G}$ is an equivalence of categories because it is:
\begin{itemize}
\item essentially surjective. Here, each object $(A_0,A_1,A_\infty,\wMzero,\wM_\infty)$ of $\mathcal{C}_{rs}$ is of the form  $G(A_0,A_1,A_\infty,F_0,\wF_1,F_\infty)$. Indeed, we set $\wF_1:=\wW_1$ and $F_0:=W_0$, $F_\infty:=W_\infty$ (notations introduced in the category of connection data, Section \ref{sec:cat_connections}).

\item fully faithful. Let $$(S_0,\wS_1,S_\infty)\in\ho_{\mathcal{C}_{rs}}\left((A_0,A_1,A_\infty,\wMzero,\wM_\infty),(B_0,B_1,B_\infty,\wNzero,\wN_\infty)\right).$$ There exists exactly one morphism $(R,S_0,\wS_1,S_\infty)\in\ho\left(\mathcal{S}_{rs}\right)$ such that $$\mathscr{G}(R,S_0,\wS_1,S_\infty)=(S_0,\wS_1,S_\infty).$$
It is clear because $R=G_iS_iF_i^{-1}$ for $i=0,\infty$ and $$\pi^\star R=\wGi S_i\wFi^{-1}=\wG_1\wS_1\wF_1^{-1}.$$ Therefore, the entries of $\pi^\star R$ belong to
$$\mathcal{M}\left(\revc\right)\cap\pi^\star\mathcal{M}\left(D(0,1)\right)\cap \pi^\star\mathcal{M}\left(\mathbb{P}^1\left(\mathbb{C}\right)\setminus\overline{D}(0,1)\right).$$ By Lemma \ref{lem:merom_funct_inter}, the entries of $\pi^\star R$ belong to $\pcz$ that is the entries of $R$ are rational functions.
\end{itemize}
\end{proof}

From the two previous propositions, we have the following result.

\begin{cor}\label{cor:equiv_global_cat}
The categories $\mathcal{C}_{rs}$ and $\mathcal{E}_{rs}$ are equivalent.
\end{cor}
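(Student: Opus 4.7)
The statement is a direct consequence of the two preceding propositions, so the proof will essentially be a formal composition of equivalences rather than new work. The plan is to combine the equivalence $\mathscr{F}:\mathcal{S}\to\mathcal{E}_{rs}$ with the equivalence $\mathscr{G}:\mathcal{S}\to\mathcal{C}$ and conclude by the general fact that equivalences of categories satisfy the two-out-of-three property.

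More concretely, I would first pick a quasi-inverse $\mathscr{G}^{-1}:\mathcal{C}\to\mathcal{S}$ of the equivalence $\mathscr{G}$, whose existence follows from the previous proposition (essentially, given $(A_0,A_1,A_\infty,\wMzero,\wM_\infty)\in\obj(\mathcal{C})$, one uses the factorizations $\wW_1\wMzero=\pi^\star W_0$ and $\wW_1\wM_\infty=\pi^\star W_\infty$ supplied by the definition of $\mathcal{C}$ to set $\wF_1:=\wW_1$, $F_0:=W_0$, $F_\infty:=W_\infty$, exactly as in the essential surjectivity part of the proof of the proposition on $\mathscr{G}$). I would then define
\[
\mathscr{H}:=\mathscr{F}\circ\mathscr{G}^{-1}:\mathcal{C}\longrightarrow\mathcal{E}_{rs},
\]
and observe that as a composition of equivalences it is itself an equivalence of categories.

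If a more explicit description is preferred, one can spell out the functor $\mathscr{H}$ on objects as follows: given $(A_0,A_1,A_\infty,\wMzero,\wM_\infty)\in\obj(\mathcal{C})$, the choices $\wF_1:=\wW_1$, $F_0:=W_0$ and $F_\infty:=W_\infty$ produce an object of $\mathcal{S}$, and applying $\mathscr{F}$ to it returns the rational matrix $A$ with $\pi^\star A=\puip(\wW_1)A_1\wW_1^{-1}$. The fact that this $A$ has entries in $\mathbb{C}(z)$ and defines a regular singular Mahler system at $0,1$ and $\infty$ has already been verified in the proof of the equivalence $\mathscr{F}$, using Lemma \ref{lem:merom_funct_inter}.

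There is no genuine obstacle here, since both building-block equivalences have already been established in full; the only minor point to keep in mind is the well-known subtlety that composing equivalences requires picking a quasi-inverse, but this is harmless up to natural isomorphism and does not affect the conclusion. The statement therefore follows immediately.
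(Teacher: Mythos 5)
Your proposal is correct and matches the paper, which deduces the corollary directly from the two preceding propositions by composing the equivalence $\mathscr{F}:\mathcal{S}\to\mathcal{E}_{rs}$ with (a quasi-inverse of) $\mathscr{G}:\mathcal{S}\to\mathcal{C}$. The extra details you supply about constructing the quasi-inverse are consistent with the essential surjectivity argument already given in the paper.
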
 

\subsection{Structure of neutral Tannakian category over \texorpdfstring{$\mathbb{C}$}{C}}\leavevmode\par

As we did for $\mathcal{E}_{rs}$ (and $\mathcal{E}_{sf}$), we equip the categories $\mathcal{C}_{rs}$ and $\mathcal{S}_{rs}$ with a tensor product on objects and morphisms. It is defined componentwise using the tensor product on matrices defined in Section \ref{sec:generalities}. The equivalences defined in Section \ref{sec:equivalence_cat} are compatible with these tensor structures. Since $\mathcal{E}_{rs}$ is a neutral Tannakian category over $\mathbb{C}$ (see Section \ref{sec:E_rs_tannak}), the categories $\mathcal{C}_{rs}$ and $\mathcal{S}_{rs}$ are also neutral Tannakian categories over $\mathbb{C}$. In Proposition \ref{prop:fibre_functors}, we explicit fibre functors for the category $\mathcal{C}_{rs}$.

\begin{prop}\label{prop:fibre_functors}
For all $\wa\in\revc\setminus\lbrace\widu\rbrace$, we consider the functors $$\omega_0,\omega_\infty, \omega_1^{\left(\wa\right)}:\mathcal{C}_{rs}\rightarrow \vect^f_{\mathbb{C}}$$ defined by:
\begin{itemize}
\item if $\mathcal{X}\in\obj\left(\mathcal{C}_{rs}\right)$ is of rank $n$ then for $i=0,\infty$, 
$$\omega_i\left(\mathcal{X}\right)=\mathbb{C}^n\quad\mbox{and}\quad\omega_1^{\left(\wa\right)}\left(\mathcal{X}\right)=\mathbb{C}^n,$$
\item if $\left(S_0,\wS_1,S_\infty\right)\in\ho\left(\mathcal{C}_{rs}\right)$ then for $i=0,\infty$, 
$$\omega_i\left(S_0,\wS_1,S_\infty\right)=S_i\quad\mbox{and}\quad \omega_1^{\left(\wa\right)}\left(S_0,\wS_1,S_\infty\right)=\wS_1\left(\wa\right).$$
\end{itemize}
These are fibre functors for the category of connection data $\mathcal{C}_{rs}$, which is a neutral Tannakian category over $\mathbb{C}$.
\end{prop}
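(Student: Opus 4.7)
The plan is to realize each of $\omega_0$, $\omega_\infty$ and $\omega_1^{(\wa)}$ as the composition of an obvious forgetful functor $\mathcal{C}\to\mathcal{P}^{(i)}$ with the fibre functor already constructed on the corresponding local category in Part III. Define
\[
\mathscr{L}_0:\mathcal{C}\to\mathcal{P}^{(0)},\quad \mathscr{L}_\infty:\mathcal{C}\to\mathcal{P}^{(\infty)},\quad \mathscr{L}_1:\mathcal{C}\to\mathcal{P}^{(1)}
\]
by projecting an object $(A_0,A_1,A_\infty,\wMzero,\wM_\infty)$ onto $A_0$, $A_\infty$, $A_1$ respectively and a morphism $(S_0,\wS_1,S_\infty)$ onto $S_0$, $S_\infty$, $\wS_1$ respectively. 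By construction $\omega_0=\omega_{l,0}\circ\mathscr{L}_0$, $\omega_\infty=\omega_{l,\infty}\circ\mathscr{L}_\infty$, and $\omega_1^{(\wa)}=\omega_{l,1}^{(\wa)}\circ\mathscr{L}_1$.

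First I would check that each $\mathscr{L}_i$ is $\mathbb{C}$-linear and tensor: since the tensor product on $\mathcal{C}$ is built componentwise from the Kronecker product on each matrix slot, the projection onto a single slot automatically respects the tensor structure and the unit object. Composing with $\omega_{l,0}$, $\omega_{l,\infty}$, $\omega_{l,1}^{(\wa)}$ — which are $\mathbb{C}$-linear tensor functors into $\vect^f_{\mathbb{C}}$ by the results of Part III — yields that $\omega_0,\omega_\infty,\omega_1^{(\wa)}$ are $\mathbb{C}$-linear tensor functors with values in $\vect^f_{\mathbb{C}}$.

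Next I would establish faithfulness. Suppose $(S_0,\wS_1,S_\infty)$ is a morphism with $\omega_0(S_0,\wS_1,S_\infty)=S_0=0$. The compatibility relations
\[
\wS_1\wMzero=\wNzero S_0,\qquad \wS_1\wM_\infty=\wN_\infty S_\infty
\]
together with the invertibility of $\wMzero$ force $\wS_1=0$, and then invertibility of $\wN_\infty$ forces $S_\infty=0$. The same argument, starting from the other slots, gives faithfulness of $\omega_\infty$ (via the $0$-side compatibility) and of $\omega_1^{(\wa)}$ (faithfulness of $\omega_{l,1}^{(\wa)}$ on $\mathcal{P}^{(1)}$ forces $\wS_1=0$, whence $\wNzero S_0=0$ and $\wN_\infty S_\infty=0$ give $S_0=S_\infty=0$). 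For exactness I would invoke Corollary \ref{cor:equiv_global_cat}: the equivalence $\mathcal{C}\simeq\mathcal{E}_{rs}$ transports the abelian structure, so kernels and cokernels in $\mathcal{C}$ can be computed through $\mathscr{F}\circ\mathscr{G}^{-1}$; on each slot of $\mathcal{C}$ the constructed kernel/cokernel coincides with that computed in $\mathcal{P}^{(i)}$, and $\omega_{l,i}$ is exact on $\mathcal{P}^{(i)}$.

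Finally, the existence of any one of these fibre functors, combined with the fact that $\mathcal{C}$ is already a rigid abelian tensor category by Corollary \ref{cor:equiv_global_cat} (inherited from $\mathcal{E}_{rs}$, shown Tannakian in Section~\ref{sec:E_rs_tannak}), gives that $\mathcal{C}$ is a neutral Tannakian category over $\mathbb{C}$. The main obstacle I anticipate is the bookkeeping needed to justify exactness: one has to check that, when forming kernels or cokernels in $\mathcal{C}$, the gauge matrices $\wW_1$, $W_0$, $W_\infty$ restrict compatibly to the subsystems produced by the local kernels — but this is where appealing to the equivalence with $\mathcal{E}_{rs}$, rather than arguing directly inside $\mathcal{C}$, is the cleanest route.
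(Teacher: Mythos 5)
Your proposal is correct and follows essentially the same route as the paper: the paper likewise treats everything except the faithfulness of $\omega_1^{(\wa)}$ as immediate, and its proof of that point (a Laurent polynomial in $\wln$ vanishing on the whole orbit $\wa^{p^k}$, $\wa\neq\widu$, must vanish identically) is exactly the fact underlying the faithfulness of $\omega_{l,1}^{(\wa)}$ on $\mathcal{P}^{(1)}$ that you cite instead. Your additional step deducing $S_0=S_\infty=0$ from $\wS_1=0$ via the invertibility of the connection matrices is a detail the paper leaves implicit, and your factorization through the projection functors is the same device the paper uses later to identify $G_0$, $G_1$, $G_\infty$ as subgroupoids of $G$.
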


\begin{proof}
The only non-trivial point is the faithfulness of the functors $\omega_1^{\left(\wa\right)}$, $\wa\in\revc\setminus\lbrace\widu\rbrace$. To prove it, we assume that $\wS_1\left(\wa\right)=0$ and we want to show that $\wS_1=0$. From the equality $\puip\left(\wS_1\right)=B_1\wS_1A_1^{-1}$ it follows that $\wS_1\left(\wa^{p^k}\right)=0$ for all $k\in\mathbb{Z}$. Since $\wa\in\revc\setminus\lbrace\widu\rbrace$, the entries of $\wS_1$ are Laurent polynomials in $\wln$ with an infinite number of roots, thus they are equal to $0$. 
\end{proof}

\section{Mahler Analogue of the Density Theorem of Schlesinger}\label{part5}

In order to prove (an analogue of) the density theorem of Schlesinger, the two main approaches are the use of a Picard-Vessiot theory and the use of a Tannakian duality (see for instance \cite{Sau03,VDPS97}). In \cite[Theorem 1.32]{VDPS97}, the authors proved that a Galois group in a Picard-Vessiot theory can be seen as a group of automorphisms of a suitable fibre functor $\omega$, that is the group $\aut^\otimes\left(\omega\right)$. Thus, a Galois group is a Tannakian Galois group. Moreover, two fibre functors defined on a same neutral Tannakian category are isomorphic so these two theories coincide. In what follows, we use the Tannakian approach.

\subsection{The Galois groupoid of the category \texorpdfstring{$\mathcal{C}_{rs}$}{Crs}}
\begin{defin}\label{defin:groupoid_C}
The \emph{Galois groupoid of the category of connection data $\mathcal{C}_{rs}$}, denoted by $G$, is such that:
\begin{itemize}
\item the objects are the fibre functors $\omega_0, \omega_\infty, \omega_1^{(\wa)}: \mathcal{C}_{rs}\rightarrow \vect^f_{\mathbb{C}}$ defined in Proposition \ref{prop:fibre_functors},
\item the morphisms between $\omega,\omega'\in\left\lbrace \omega_0\right\rbrace\cup\left\lbrace\omega_\infty\right\rbrace\cup\left\lbrace\omega_1^{\left(\wa\right)}\mid \wa\in\revc\setminus\lbrace\widu\rbrace\right\rbrace$ are the isomorphisms of tensor functors from $\omega$ to $\omega'$:
$$G\left(\omega,\omega'\right)=\iso^\otimes\left(\omega,\omega'\right).$$
\end{itemize}
\end{defin}

\begin{prop}
\label{prop:identify_groupoids}
The local Galois groupoids $G_0$, $G_1$ and $G_\infty$ can be identified with subgroupoids of the Galois groupoid $G$ of the category $\mathcal{C}_{rs}$.  
\end{prop}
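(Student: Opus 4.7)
The plan is to construct, for each $i \in \{0, 1, \infty\}$, a localization tensor functor $L_i : \mathcal{C} \to \mathcal{P}^{(i)}$ that extracts the constant matrix $A_i$ from an object of $\mathcal{C}$ and the local component ($S_0$, $\wS_1$, or $S_\infty$) from a morphism. A direct inspection will show that $L_i$ is well-defined (the defining relations for morphisms in $\mathcal{C}$ stated in \ref{sec:cat_connections} contain verbatim the defining relations required in $\mathcal{P}^{(i)}$), and that it is a tensor functor, since the tensor structure on $\mathcal{C}$ is componentwise.

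Next, I will record the key factorizations
\[
\omega_0 = \omega_{l,0} \circ L_0, \qquad \omega_\infty = \omega_{l,\infty} \circ L_\infty, \qquad \omega_1^{(\wa)} = \omega_{l,1}^{(\wa)} \circ L_1
\]
for every $\wa \in \revc \setminus \{\widu\}$, which are immediate from the explicit descriptions of the fibre functors in Proposition \ref{prop:fibre_functors}. These factorizations produce canonical pullback maps on the groupoid level: given a morphism $\varphi$ in $G_i$, I define $L_i^*\varphi$ by $(L_i^*\varphi)_X := \varphi_{L_i(X)}$ for $X\in\obj(\mathcal{C})$. Naturality and tensor compatibility transfer from $\varphi$ via the tensor functoriality of $L_i$, and the assignment $\varphi \mapsto L_i^*\varphi$ respects composition and inverses, so it is a morphism of groupoids. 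On objects it identifies $\omega_{l,0}$, $\omega_{l,\infty}$, $\omega_{l,1}^{(\wa)}$ with $\omega_0$, $\omega_\infty$, $\omega_1^{(\wa)}$ respectively.

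The only non-routine step, and the main obstacle, is to show that each $L_i^*$ is injective on morphisms. My strategy is to exhibit, for every $C \in GL_n(\mathbb{C})$, an object $X_C$ of $\mathcal{C}$ satisfying $L_i(X_C) = C$ simultaneously for $i=0,1,\infty$. The natural candidate is $X_C := (C, C, C, I_n, I_n)$ equipped with auxiliary data $\wW_1 = W_0 = W_\infty = I_n$; all the defining identities of $\mathcal{C}$ then collapse to the trivial equality $I_n = I_n$. With this family of test objects in hand, the equality $L_i^*\varphi = L_i^*\varphi'$ specialized at each $X_C$ forces $\varphi_C = \varphi'_C$ for every constant matrix $C$, whence $\varphi = \varphi'$. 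This realizes each local Galois groupoid $G_i$ as a subgroupoid of $G$. Beyond the componentwise tensor structure and the availability of the constant-matrix objects $X_C$, no deep ingredient is required; the argument is essentially bookkeeping organized through the localization functors $L_i$.
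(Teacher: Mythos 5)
Your proposal is correct and follows the same basic route as the paper: both factor the global fibre functors through the componentwise projection functors (the paper writes $\omega_0=\omega_{l,0}\mathscr{P}$ for its projection $\mathscr{P}:\mathcal{C}\rightarrow\mathcal{P}^{(0)}$, which is your $L_0$) and then pull back tensor isomorphisms along them; your verification that naturality and tensor compatibility transfer is the routine part in both versions. The difference lies entirely in how injectivity is justified. The paper invokes Proposition 2.21 of Deligne--Milne to conclude that $\mathscr{P}^\star$ is a closed immersion of proalgebraic groups; that criterion requires that every object of the local category $\mathcal{P}^{(i)}$ be (a subquotient of) an object in the image of the projection functor, a hypothesis the paper does not verify explicitly. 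Your constant test objects $X_C=(C,C,C,I_n,I_n)$ supply exactly this hypothesis in its strongest form --- each $L_i$ is surjective on objects, and one checks as you say that $X_C$ satisfies all the defining identities of $\mathcal{C}$ trivially --- and you then deduce injectivity directly by evaluating at the $X_C$, bypassing the citation. What you state is only set-theoretic injectivity rather than the scheme-theoretic conclusion ``closed immersion'' (which is what feeds into the later density arguments); but your evaluation argument applies verbatim to points with values in any $\mathbb{C}$-algebra, so the induced map is a monomorphism of affine group schemes over a field and hence a closed immersion anyway. In short, your proof is a self-contained, elementary version of the paper's argument that makes explicit the one hypothesis the paper's citation glosses over; it would be worth adding one sentence upgrading ``injective'' to ``monomorphism, hence closed immersion'' so that the identification is as subgroupoids of proalgebraic groupoids.
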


\begin{proof}
Let us prove that $G_0$ can be identified with a subgroupoid of $G$. We can see that $\omega_0=\omega_{l,0}\mathscr{P}$ with 
$$
\begin{array}{rccl}
\mathscr{P} : & \mathcal{C}_{rs} & \rightarrow & \mathcal{P}^{(0)}\\
& \left(A_0,A_1,A_\infty,\wMzero,\wM_\infty\right)\in\obj\left(\mathcal{C}_{rs}\right) & \leadsto & A_0\\
& \left(S_0,\wS_1,S_\infty\right) & \leadsto & S_0
\end{array}
$$
the functor `` projection ''.  
The restriction of the elements of $\aut^\otimes\left(\omega_{l,0}\right)$ to $\mathcal{C}_{rs}$ gives a morphism of proalgebraic groups 
$$\mathscr{P}^\star : \aut^\otimes\left(\omega_{l,0}\right)\rightarrow\aut^\otimes\left(\omega_{0}\right).$$ 
Let us prove that it is a closed immersion of proalgebraic groups. Every object of the category of representations of $\aut^\otimes\left(\omega_{l,0}\right)$, denoted by $\mbox{Rep}\left(\aut^\otimes\left(\omega_{l,0}\right)\right)$, is isomorphic to a subquotient of an object of $\mbox{Rep}\left(\aut^\otimes\left(\omega_{l,0}\right)\right)$ of the form $\omega^{\mathscr{P}^\star}\left(\mathcal{X}\right)$ where $\mathcal{X}\in\obj\left(\mbox{Rep}\left(\aut^\otimes\left(\omega_{0}\right)\right)\right)$ and 
$$\omega^{\mathscr{P}^\star}: \mbox{Rep}\left(\aut^\otimes\left(\omega_{0}\right)\right) \rightarrow\mbox{Rep}\left(\aut^\otimes\left(\omega_{l,0}\right)\right).$$
Indeed, $\mathscr{P}$ is essentially surjective and, by Tannakian duality, the categories $\mbox{Rep}\left(\aut^\otimes\left(\omega_{0}\right)\right)$ and $\mathcal{C}_{rs}$ are equivalent and the categories $\mbox{Rep}\left(\aut^\otimes\left(\omega_{l,0}\right)\right)$ and $\mathcal{P}^{(0)}$ are equivalent. Thus, from \cite[Proposition 2.21]{DelMil}, $\mathscr{P}^\star$ is a closed immersion of proalgebraic groups from $G_0\left(\omega_{l,0},\omega_{l,0}\right)=\aut^\otimes\left(\omega_{l,0}\right)$ to $G\left(\omega_{0},\omega_{0}\right)=\aut^\otimes\left(\omega_{0}\right)$.
\end{proof}

We also denote by $G_0,G_1,G_\infty$ the subgroupoids of $G$ which are identified with $G_0,G_1,G_\infty$ in Proposition \ref{prop:identify_groupoids}. These subgroupoids give elements $\varphi_0\in G\left(\omega_{0},\omega_{0}\right)$, $\varphi_\infty\in G\left(\omega_{\infty},\omega_{\infty}\right)$ and $\varphi_1^{\left(\widetilde{c},\widetilde{d}\right)}\in G\left(\omega_1^{\left(\widetilde{c}\right)},\omega_1^{\left(\widetilde{d}\right)}\right)$ for all $\widetilde{c},\widetilde{d}\in\revc\setminus\lbrace\widu\rbrace$. In Section \ref{sec:construction_isom}, we build elements $\Gamma_{0,\wa}\in G\left(\omega_0,\omega_1^{\left(\wa\right)} \right)$ and $\Gamma_{\infty,\wb}\in G\left(\omega_{\infty},\omega_1^{\left(\wb\right)} \right)$ for all $\wa\in\sigz$, $\wb\in\sigi$, they are Galoisian isomorphisms. They can be seen as paths in the Galois groupoid $G$ connecting the point $\omega_0$ to the point $\omega_1^{\left(\wa\right)}$ and the point $\omega_{\infty}$ to the point $\omega_1^{\left(\wb\right)}$ respectively. Therefore, $G$ is a transitive groupoid. We write $\widetilde{z}_0\in\sigz$, $\widetilde{z}_{\infty}\in\sigi$ and $\widetilde{z}_1,\widetilde{z}_1'\in\revc\setminus\left(\sigz\cup\sigi\cup\lbrace\widu\rbrace\right)$. The following diagram (which is not commutative) represents the transitive groupoid $G$.

$$
\SelectTips{eu}{12}
\xymatrix @R=0.3cm @C=1.1cm{ 
   & & & & \omega_1^{\left(\wa\right)}  \ar@/^2pc/[dd]^{\varphi_1^{\left(\wa,\widetilde{z}_0\right)}} & & & & \\
   & & & & \vdots & & & & \\
   & & & & \omega_1^{\left(\widetilde{z}_0\right)} \ar@/^2pc/[d]^{\varphi_1^{\left(\widetilde{z}_0,\widetilde{z}_1\right)}} & & & & \\
   & & & & \omega_1^{\left(\widetilde{z}_1\right)}\ar@/^2pc/[dd]^{\varphi_1^{\left(\widetilde{z}_1,\widetilde{z}_1'\right)}} & & & & \\
   \omega_0\ar@(ul,dl)[]_{\varphi_0}  \ar@/^2pc/[rrrruuuu]^{\Gamma_{0,\wa}} \ar@/^1pc/[rrrruu]^{\Gamma_{0,\widetilde{z}_0}} & & & & \vdots & & & & \omega_{\infty}\ar@(dr,ur)[]_{\varphi_{\infty}}  \ar@/^2pc/[lllldddd]^{\Gamma_{\infty,\widetilde{z}_\infty}} \ar@/^1pc/[lllldd]^{\Gamma_{\infty,\wb}}\\
   & & & & \omega_1^{\left(\widetilde{z}_1'\right)}\ar@/_2pc/[d]_{\varphi_1^{\left(\widetilde{z}_1',\wb\right)}} & & & & \\
   & & & & \omega_1^{\left(\wb\right)} \ar@/_2pc/[dd]_{\varphi_1^{\left(\wb,\widetilde{z}_\infty\right)}} & & & & \\
   & & & & \vdots & & & & \\
   & & & & \omega_1^{\left(\widetilde{z}_{\infty}\right)} & & & & \\
}
$$

\subsection{Construction of isomorphisms of tensor functors}\label{sec:construction_isom}\leavevmode\par

Let $\left(A_0,A_1,A_\infty,\wMzero,\wM_\infty\right)\in\obj\left(\mathcal{C}_{rs}\right)$. In order to construct the paths $\Gamma_{0,\wa}$ and $\Gamma_{\infty,\wb}$ in the Galois groupoid $G$ of $\mathcal{C}_{rs}$, we need to evaluate the matrices $\wMzero$ and $\wM_\infty$. This is why we introduce the following definition. 

\begin{defin}
The \emph{singular locus of a matrix M} is 
$$S(M):=\pol(M)\cup \pol\left(M^{-1}\right)=\pol(M)\cup\zer\left(\det M\right).$$
\end{defin}

\subsubsection{The category \texorpdfstring{$\mathcal{C}_{E_0,E_\infty}$}{C(E0,E∞)}}

\begin{notat}
\label{nota:widetilde_Ei}
Let $E_0$ (respectively $E_\infty$) be a finite subset of $D(0,1)\setminus\lbrace 0\rbrace$ (respectively of $\mathbb{C}\setminus\overline{D}(0,1)$). In what follows, a typical choice for the sets $E_0$ and $E_\infty$ is 
$$
E_0=S(A)\cap\left(D(0,1)\setminus\lbrace 0\rbrace\right)\quad\mbox{and}\quad E_\infty=S(A)\cap \left(\mathbb{C}\setminus\overline{D}(0,1)\right)
$$
for a matrix $A\in \GL_n\left(\mathbb{C}(z)\right).$
For $i=0,\infty$, we define 
$$
E_i^{p^{\mathbb{Z}}}:=\bigcup\limits_{k\in\mathbb{Z}} E_i^{p^{k}}
$$
where $E_i^{p^k}:=\lbrace z\in\mathbb{C}\mid z^{p^{-k}}\in E_i\rbrace$ if $k\in\mathbb{Z}_{<0}$ and $E_i^{p^k}:=\lbrace z^{p^k}\mid z\in E_i\rbrace$ if $k\in\mathbb{Z}_{\geq 0}$.
We let
$$
\widetilde{E_i}:=\left\lbrace \left(z,\arg(z)+2\pi l\right)\left| z\in E_i^{p^{\mathbb{Z}}}, l\in\mathbb{Z}\right.\right\rbrace
$$
where for $z\in\mathbb{C}^*$, $\arg(z)$ denotes an argument of the complex number $z$.
 
We consider the full subcategory $\mathcal{C}_{E_0,E_\infty}$ of the category $\mathcal{C}_{rs}$ whose objects are the $\left(A_0,A_1,A_\infty, \wMzero, \wM_\infty\right)\in\obj\left(\mathcal{C}_{rs}\right)$ such that for $i=0,\infty$, 
$$S\left(\widetilde{M_i}\right)\subset \widetilde{E_i}.$$ 
\end{notat}

\begin{lem}
Every morphism in the category $\mathcal{C}_{E_0,E_\infty}$ has a kernel.
\end{lem}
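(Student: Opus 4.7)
The plan is to use that $\mathcal{C}$, being equivalent to the Tannakian category $\mathcal{E}_{rs}$ (Corollary \ref{cor:equiv_global_cat}), is itself Tannakian and thus admits all kernels, and then to check that a representative of the abstract kernel can be chosen inside $\mathcal{C}_{E_0,E_\infty}$. Let $\phi = (S_0,\wS_1,S_\infty) : \mathcal{X} \to \mathcal{Y}$ be a morphism in $\mathcal{C}_{E_0,E_\infty}$ with $\mathcal{X} = (A_0,A_1,A_\infty,\wMzero,\wM_\infty)$. The abstract kernel $(\mathcal{K},K)$ in $\mathcal{C}$ has the shape $\mathcal{K} = (C_0,C_1,C_\infty,\widetilde{C_0},\widetilde{C_\infty})$, $K=(K_0,\widetilde{K_1},K_\infty)$. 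Since $\mathcal{C}_{E_0,E_\infty}$ is a full subcategory of $\mathcal{C}$, it is enough to exhibit a representative of $\mathcal{K}$ satisfying $S(\widetilde{C_0}) \subset \widetilde{E_0}$ and $S(\widetilde{C_\infty}) \subset \widetilde{E_\infty}$.

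I would identify the components via the local Tannakian categories $\mathcal{P}^{(0)}, \mathcal{P}^{(1)}, \mathcal{P}^{(\infty)}$ (which are Tannakian over $\mathbb{C}$, hence have kernels). The pairs $(C_0,K_0)$ and $(C_\infty,K_\infty)$ arise as kernels of $S_0, S_\infty$ in $\mathcal{P}^{(0)},\mathcal{P}^{(\infty)}$, so $C_i = A_i|_{\ker S_i}$ with $K_i$ a constant inclusion matrix, for $i=0,\infty$. The pair $(C_1,\widetilde{K_1})$ is the kernel of $\wS_1$ in $\mathcal{P}^{(1)}$, and $\widetilde{K_1}$ has entries in $\mathbb{C}[\wln,\wln^{-1}]$ by Lemma \ref{lem:morph_cst_syst_1}. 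Translating the condition that $K : \mathcal{K} \to \mathcal{X}$ is a morphism in $\mathcal{C}$ gives the compatibility relations
\[\widetilde{K_1}\,\widetilde{C_0} = \wMzero\,K_0, \qquad \widetilde{K_1}\,\widetilde{C_\infty} = \wM_\infty\,K_\infty.\]
The right hand sides have singular loci contained in $\widetilde{E_0}$ and $\widetilde{E_\infty}$, because $K_0, K_\infty$ are constant and $\mathcal{X} \in \mathcal{C}_{E_0,E_\infty}$.

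The conclusion hinges on the fact that the fibre functors $\omega_{l,1}^{(\wa)}$ being exact, $\widetilde{K_1}(\wa)$ realises the inclusion $\ker\wS_1(\wa) \hookrightarrow \mathbb{C}^{n_X}$ for every $\wa \in \revc \setminus \{\widu\}$, so $\widetilde{K_1}$ has full column rank $n_C$ throughout $\sigz \cup \sigi$ (and its only possible pole on $\revc$, at $\widu$, lies outside $\sigz \cup \sigi$). As $\sigz$ and $\sigi$ are simply connected Stein domains of $\revc$ and $\widetilde{K_1}$ is a holomorphic injection of vector bundles of constant rank there, a holomorphic left inverse $L_1$ of $\widetilde{K_1}$ exists on each of them; substituting yields $\widetilde{C_0} = L_1\,\wMzero\,K_0$ and $\widetilde{C_\infty} = L_1\,\wM_\infty\,K_\infty$, which bounds the poles of $\widetilde{C_0}$ and $\widetilde{C_\infty}$ by those of $\wMzero$ and $\wM_\infty$. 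The main obstacle is that $S(\cdot)$ also records the zeros of the determinant, so one must additionally control the poles of $\widetilde{C_0}^{-1}$ and $\widetilde{C_\infty}^{-1}$; I would handle this by a dual argument, applying the same reasoning to the kernel of the transposed morphism (equivalently, to the cokernel that Tannakian duality associates to $\phi^\vee$), or by exploiting the scalar Mahler equation $\puip(\det\widetilde{C_0}) = (\det C_1/\det C_0)\det\widetilde{C_0}$ (with $C_0,C_1$ constant): its zero set is $\puip$-stable, and must be inherited from the corresponding set for $\det\wMzero$, which is forced into $\widetilde{E_0}$ by the hypothesis $\mathcal{X} \in \mathcal{C}_{E_0,E_\infty}$.
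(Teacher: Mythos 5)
Your overall architecture coincides with the paper's: produce the kernel $\left(\mathcal{K},\left(K_0,\widetilde{K_1},K_\infty\right)\right)$ in $\mathcal{C}$ via the equivalence with $\mathcal{E}_{rs}$, then use the compatibility relation $\widetilde{K_1}\widetilde{C_0}=\wMzero K_0$ to force $S\left(\widetilde{C_0}\right)\subset\widetilde{E_0}$ (and likewise at $\infty$); only the execution of this last step differs. Your first half is sound: since the kernel map is a monomorphism and the fibre functors $\omega_1^{(\wa)}$ are exact, $\widetilde{K_1}\left(\wa\right)$ is injective for every $\wa\neq\widu$, so $\widetilde{K_1}$ is left-invertible by a matrix holomorphic near each point of $\sigz\cup\sigi$ (an invertible square block suffices; the Stein-bundle language is overkill), and $\widetilde{C_0}=L_1\wMzero K_0$ bounds the poles of $\widetilde{C_0}$ by those of $\wMzero$, hence by $\widetilde{E_0}$. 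The paper does the mirror image: it left-inverts the \emph{constant} full-column-rank matrix $K_0$ and first bounds the poles of $\widetilde{C_0}^{-1}$.

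The gap sits exactly where you flag ``the main obstacle'', namely the zeros of $\det\widetilde{C_0}$. Your second proposed fix does not work as stated: the relation $\widetilde{K_1}\widetilde{C_0}=\wMzero K_0$ involves non-square matrices, so no determinant identity links $\det\widetilde{C_0}$ to $\det\wMzero$, and the scalar equation $\puip\left(\det\widetilde{C_0}\right)=\left(\det C_1/\det C_0\right)\det\widetilde{C_0}$ only shows that the zero set of $\det\widetilde{C_0}$ is $p^{\mathbb{Z}}$-stable; $\widetilde{E_0}$ is one such set but by no means the only one, so nothing yet forces the zeros into $\widetilde{E_0}$. Your first proposed fix (dualize and bound the poles of the cokernel of $\phi^\vee$, whose projection is pointwise surjective and hence right-invertible) would close the gap, but you leave it as a one-line sketch. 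The quickest repair is a direct argument symmetric to your first half: if $\wa\in\sigz\setminus\widetilde{E_0}$ and $\widetilde{C_0}\left(\wa\right)v=0$ with $v\neq 0$ (the evaluation is licit since you have already excluded poles there), then $\wMzero\left(\wa\right)K_0v=\widetilde{K_1}\left(\wa\right)\widetilde{C_0}\left(\wa\right)v=0$; as $\wMzero\left(\wa\right)$ is invertible and $K_0$ is a constant matrix of full column rank, $v=0$, a contradiction. This is essentially the paper's own second step run in the opposite direction: there one assumes a zero of $\det\left(\widetilde{C_0}^{-1}\right)$ outside $\widetilde{E_0}$, propagates it along $\wa\mapsto\wa^{p^k}$ via the functional equation, and concludes that a nonzero Laurent polynomial in $\wln$ would have infinitely many roots.
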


\begin{proof}
We let $\left(S_0,\wS_1,S_\infty\right)$ denote a morphism in the category $\mathcal{C}_{E_0,E_\infty}$ from the object $\mathcal{X}:=~\left(A_0,A_1,A_\infty,\wMzero,\wM_\infty\right)$ of rank $n_1$ to the object $\mathcal{Y}:=\left(B_0,B_1,B_\infty,\wNzero,\wN_\infty\right)$ of rank $n_2$. By the equivalence between the categories $\mathcal{E}_{rs}$ and $\mathcal{C}_{rs}$ (see Corollary \ref{cor:equiv_global_cat}) and Lemma \ref{lem:kernels_in_E_rs}, there exist $$\mathcal{X}':=\left(A_0',A_1',A_\infty',\wMzero',\wM_\infty'\right)\in\obj\left(\mathcal{C}_{rs}\right)$$ of rank $n_1-r$ with $r=\rg(S_0)=\rg(\wS_1)=\rg(S_\infty)$ and $$\left(K_0,\widetilde{K_1},K_\infty\right)\in\ho\left(\mathcal{C}_{rs}\right)$$ such that
\begin{equation}
\left(\mathcal{X}',\left(K_0,\widetilde{K_1},K_\infty\right): \mathcal{X}'\rightarrow\mathcal{X} \right)
\end{equation}
is a kernel of the morphism $\left(S_0,\wS_1,S_\infty\right)$ in the category $\mathcal{C}_{rs}$ with $$\rg(K_0)=\rg\left(\widetilde{K_1}\right)=\rg(K_\infty)=n_1-r.$$ It remains to show that $\mathcal{X}'$ is an object of $\mathcal{C}_{E_0,E_\infty}$. Let us study the singular locus of $\wMzero'$. Given that $\left(K_0,\widetilde{K_1},K_\infty\right)\in\ho_{\mathcal{C}_{rs}}\left(\mathcal{X}',\mathcal{X}\right)$, we have $\widetilde{K_1}\wMzero'=\wMzero K_0$ that is 
\begin{equation}\label{eq:Mzero'_Mzero}
K_0\wMzero'^{-1}=\wMzero^{-1}\widetilde{K_1}.
\end{equation}
There exists a permutation matrix $T\in \GL_{n_1}\left(\mathbb{C}\right)$ such that
\begin{center}
$TK_0=\begin{pmatrix}
L_1\\ L_2
\end{pmatrix}$ with $L_1\in \GL_{n_1-r}\left(\mathbb{C}\right)$ and $L_2\in\mathcal{M}_{r,n_1-r}\left(\mathbb{C}\right)$
\end{center}
because $\rg(K_0)=n_1-r$.
The equality \eqref{eq:Mzero'_Mzero} implies
$$\wMzero'^{-1}=\left(L_1^{-1}\quad 0\right)T \wMzero^{-1} \widetilde{K_1}$$
so 
\begin{equation}\label{eq:poles_Mzero'}
\pol\left(\wMzero'^{-1}\right)\subset\widetilde{E_0}.
\end{equation}
In order to finish the study of the singular locus of $\wMzero'$, we show that 
\begin{equation}\label{eq:zeros_det_Mzero'}
\zer\left(\det\left(\wMzero'^{-1}\right)\right)\subset\widetilde{E_0}.
\end{equation}
Let $U\in \GL_{n_1}\left(\mathbb{C}\right)$ be a permutation matrix such that
\begin{center}
$U\widetilde{K_1}=\begin{pmatrix}
\widetilde{L_1}\\ \widetilde{L_2}
\end{pmatrix}$ with $\widetilde{L_1}\in \GL_{n_1-r}\left(\mathbb{C}\left[\wln,\wln^{-1}\right]\right)$ and $\widetilde{L_2}\in\mathcal{M}_{r,n_1-r}\left(\mathbb{C}\left[\wln,\wln^{-1}\right]\right)$.
\end{center}
By contradiction, assume that there exists 
$$\wa\in\zer\left(\det\left(\wMzero'^{-1}\right)\right)\cap\left(\sigz\setminus\widetilde{E_0}\right).$$
From \eqref{eq:poles_Mzero'}, we know that $\wa$ is not a pole of $\wMzero'^{-1}$ so we can consider $\wMzero'^{-1}\left(\wa\right)$ and there exists $v\in\mathbb{C}^{n_1-r}\setminus\lbrace 0\rbrace$ such that $$\wMzero'^{-1}\left(\wa\right)v=0\in\mathbb{C}^{n_1-r}.$$
The equality \eqref{eq:Mzero'_Mzero} implies
$$0=K_0\wMzero'^{-1}\left(\wa\right)v=\wMzero^{-1}\left(\wa\right)U^{-1}\begin{pmatrix}
\widetilde{L_1}\left(\wa\right)\\ \widetilde{L_2}\left(\wa\right)
\end{pmatrix}v$$
so $$\begin{pmatrix}
\widetilde{L_1}\left(\wa\right)\\ \widetilde{L_2}\left(\wa\right)
\end{pmatrix}v=0$$ 
because $\wMzero^{-1}\left(\wa\right)U^{-1}$ is an invertible matrix. In particular, $\wa\in\zer\left(\det\left(\widetilde{L_1}\right)\right)$. From the equality
$$\puip\left(\widetilde{K_1}\right)A'_1=A_1\widetilde{K_1},$$
we have 
$$\begin{pmatrix}
\widetilde{L_1}\left(\wa^p\right)\\ \widetilde{L_2}\left(\wa^p\right)
\end{pmatrix}A'_1v=0$$ so $\wa^p\in\zer\left(\det\left(\widetilde{L_1}\right)\right)$. By induction, it follows that for all $k\in\mathbb{N}$, $$\wa^{p^k}\in\zer\left(\det\left(\widetilde{L_1}\right)\right).$$ As $\wa\neq\widu$, $\det\left(\widetilde{L_1}\right)$ has an infinity of roots, it is a Laurent polynomial in $\wln$ so it is equal to $0$. This contradicts the fact that $\widetilde{L_1}$ is an invertible matrix.

Therefore, from \eqref{eq:poles_Mzero'} and \eqref{eq:zeros_det_Mzero'},
$$S\left(\wMzero'\right)\subset \widetilde{E_0}$$ and, similarly, we can show that 
$$S\left(\wM_\infty'\right)\subset \widetilde{E_\infty}.$$
\end{proof}

\begin{thm}
The category $\mathcal{C}_{E_0,E_\infty}$ is a neutral Tannakian category over $\mathbb{C}$.
\end{thm}

\begin{proof}
The category $\mathcal{C}_{E_0,E_\infty}$ is a subcategory of $\mathcal{C}_{rs}$ which is stable by tensor and abelian constructions. The only nontrivial point we need to prove is the existence of kernels in $\mathcal{C}_{E_0,E_\infty}$, which is given by the lemma hereabove.
\end{proof}

In what follows, we keep the same notations for the restrictions to $\mathcal{C}_{E_0,E_\infty}$ of the fibre functors $\omega_0, \omega_\infty, \omega_1^{(\wa)}, \wa\in\revc\setminus\left(\widetilde{E_0}\cup\widetilde{E_\infty}\cup \lbrace\widu\rbrace\right)$ defined in Proposition \ref{prop:fibre_functors}.  

\begin{prop}
\label{prop:catC_catCe0,einfty}
If $\mathcal{X}=\left(A_0,A_1,A_\infty,\wMzero,\wM_\infty\right)\in\obj(\mathcal{C}_{rs})$ then there exist two finite subsets $E_0$ and $E_\infty$ of $D(0,1)\setminus\lbrace 0 \rbrace$ and $\mathbb{C}\setminus\overline{D}(0,1)$ respectively, such that $\mathcal{X}\in\obj\left(\mathcal{C}_{E_0,E_\infty}\right)$. In particular, we have 
$$ E_0=S(A)\cap\left(D(0,1)\setminus\lbrace 0 \rbrace\right)\quad \mbox{and}\quad E_\infty=S(A)\cap \left(\mathbb{C}\setminus\overline{D}(0,1)\right)$$
for a matrix $A\in\GL_n\left(\mathbb{C}(z)\right)$.
\end{prop}

\begin{proof}
Using the notations of the category $\mathcal{C}_{rs}$, in Section \ref{sec:cat_connections}, the matrix 
\begin{equation}
\label{eq:relation_A_A0_W0}
A := \left\lbrace\begin{array}{ll}
\puip(W_0)A_0W_0^{-1} & \mbox{on}\quad D(0,1) \\
\puip(W_\infty)A_\infty W_\infty^{-1} & \mbox{on}\quad \mathbb{P}^1\left(\mathbb{C}\right)\setminus\overline{D}(0,1)
\end{array}\right.
\end{equation}
satisfies
\begin{equation}\label{eq:relation_A_A1_W1}
\pi^\star A=\puip\left(\wW_1\right)A_1\wW_1^{-1}.
\end{equation}
Therefore, from Lemma \ref{lem:merom_funct_inter}, $A\in \GL_n\left(\mathbb{C}(z)\right)$. From the equality $\wMzero=\wW_1^{-1}\pi^\star W_0$, we have $$S\left(\wMzero\right)\subset \left(S\left(\wW_1\right)\cup S\left(\pi^\star W_0\right)\right)\cap\sigz.$$
From the equality \eqref{eq:relation_A_A1_W1}, we have
$$\wW_1=\puip^{-1}\left(\pi^\star A\right)\cdots\puip^{-m}\left(\pi^\star A\right)\puip^{-m}\left(\wW_1\right)A_1^{-m}\quad\mbox{with}\quad m\in\mathbb{N}^\star.$$
For a $m\in\mathbb{N}^\star$ big enough and using that $\wW_1\in \GL_n\left(\mathcal{M}\left(\revc\right)\right)$, we obtain 
$$S\left(\wW_1\right)\cap\sigz\subset\widetilde{E_0}\quad \mbox{with}\quad E_0:=S(A)\cap\left(D(0,1)\setminus\lbrace 0 \rbrace\right),$$
see Notation \ref{nota:widetilde_Ei}. Similarly, from the equality \eqref{eq:relation_A_A0_W0}, we can show that $S\left(\pi^\star W_0\right)\subset\widetilde{E_0}.$ Therefore,
$$S\left(\wMzero\right)\subset\widetilde{E_0}$$ and $E_0$ is by construction a finite subset of $D(0,1)\setminus\lbrace 0\rbrace$.
The singular locus of $\wM_\infty$ satisfies the analogue property.
\end{proof}

\subsubsection{Construction of isomorphisms of tensor functors}\leavevmode\par

From Proposition \ref{prop:catC_catCe0,einfty}, we know that for every object $\mathcal{X}$ of $\mathcal{C}_{rs}$ there exist two finite sets $E_0\subset D(0,1)\setminus\lbrace 0 \rbrace$ and $E_\infty\subset \mathbb{C}\setminus\overline{D}(0,1)$ such that $\mathcal{X}$ is, in particular, an object of $\mathcal{C}_{E_0,E_\infty}$. From now on, we consider two finite subsets of $D(0,1)\setminus\lbrace 0 \rbrace$ and $\mathbb{C}\setminus\overline{D}(0,1)$, respectively denoted by $E_0$ and $E_\infty$.

\begin{prop}\label{prop:natural_transfo_gamma0}
Let $\wa\in\sigz\setminus\widetilde{E_0}$. The natural transformation
$$\Gamma_{0,\wa}: \left(A_0,A_1,A_\infty,\wMzero,\wM_\infty\right)\in\obj\left(\mathcal{C}_{E_0,E_\infty}\right)\leadsto \wMzero(\wa)$$ is an element of $\iso^\otimes \left(\omega_0,\omega_1^{\left(\wa\right)}\right)$.
\end{prop}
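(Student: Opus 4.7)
The plan is to verify, in order, the three things required: (i) that for every object $\mathcal{X}$ of $\mathcal{C}_{E_0,E_\infty}$ the matrix $\Gamma_{0,\wa}(\mathcal{X})$ is a well-defined element of $GL_n(\mathbb{C})=\iso\bigl(\omega_0(\mathcal{X}),\omega_1^{(\wa)}(\mathcal{X})\bigr)$; (ii) that $\Gamma_{0,\wa}$ is natural in $\mathcal{X}$; (iii) that $\Gamma_{0,\wa}$ respects the tensor structure (including the unit).

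For (i), fix $\mathcal{X}=(A_0,A_1,A_\infty,\wMzero,\wM_\infty)\in\obj(\mathcal{C}_{E_0,E_\infty})$. By definition of $\mathcal{C}_{E_0,E_\infty}$ we have $S(\wMzero)\subset\widetilde{E_0}$, and by hypothesis $\wa\in\sigz\setminus\widetilde{E_0}$. Hence $\wa$ is neither a pole of $\wMzero$ nor a zero of $\det\wMzero$, so $\wMzero(\wa)\in GL_n(\mathbb{C})$; since $\omega_0(\mathcal{X})=\omega_1^{(\wa)}(\mathcal{X})=\mathbb{C}^n$, this matrix is indeed an isomorphism between the two fibres.

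For (ii), let $(S_0,\wS_1,S_\infty):\mathcal{X}\rightarrow \mathcal{Y}$ be a morphism in $\mathcal{C}_{E_0,E_\infty}$, with $\mathcal{Y}=(B_0,B_1,B_\infty,\wNzero,\wN_\infty)$. The defining relation $\wS_1\wMzero=\wNzero S_0$ of morphisms in $\mathcal{C}$ may be evaluated at $\wa$, yielding
\[
\wS_1(\wa)\,\wMzero(\wa)=\wNzero(\wa)\,S_0,
\]
which is precisely the naturality square
\[
\omega_1^{(\wa)}(S_0,\wS_1,S_\infty)\circ \Gamma_{0,\wa}(\mathcal{X})=\Gamma_{0,\wa}(\mathcal{Y})\circ \omega_0(S_0,\wS_1,S_\infty).
\]
Evaluating at $\wa$ is legitimate because $\wa\notin\widetilde{E_0}$ implies that none of the matrices $\wMzero$, $\wNzero$ nor the polynomial matrix $\wS_1$ has a pole at $\wa$.

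For (iii), recall from the last section of Part IV that the tensor product in $\mathcal{C}$ is defined componentwise, using the Kronecker product on the two connection matrices. Hence, if $\mathcal{X}$ and $\mathcal{Y}$ are objects of $\mathcal{C}_{E_0,E_\infty}$ with connection matrices $\wMzero$ and $\wNzero$ at $0$ respectively, then the connection matrix of $\mathcal{X}\otimes\mathcal{Y}$ at $0$ is $\wMzero\otimes\wNzero$, and the basic identity for the Kronecker product gives
\[
(\wMzero\otimes\wNzero)(\wa)=\wMzero(\wa)\otimes\wNzero(\wa)=\Gamma_{0,\wa}(\mathcal{X})\otimes\Gamma_{0,\wa}(\mathcal{Y}).
\]
The unit object of $\mathcal{C}$ has all connection matrices equal to $1$, so $\Gamma_{0,\wa}(\mathbf{1})=1$. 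Combined with (i) and (ii) this shows $\Gamma_{0,\wa}\in\iso^{\otimes}(\omega_0,\omega_1^{(\wa)})$. No step is really hard; the only point worth stressing is the use of the defining condition of $\mathcal{C}_{E_0,E_\infty}$ to guarantee that the evaluation at $\wa$ makes sense.
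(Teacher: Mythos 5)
Your proposal is correct and follows essentially the same route as the paper: naturality comes from evaluating the defining relation $\wS_1\wMzero=\wNzero S_0$ at $\wa$, invertibility from $S(\wMzero)\subset\widetilde{E_0}$ and $\wa\notin\widetilde{E_0}$, and tensor compatibility from the componentwise Kronecker-product structure. The only (harmless) extra detail is your explicit step (i); note that the reason $\wS_1$ can be evaluated at $\wa$ is that its entries are Laurent polynomials in $\wln$ and $\wln(\wa)\neq 0$ for $\wa\in\sigz$, rather than the condition $\wa\notin\widetilde{E_0}$.
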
 
  
\begin{proof}
To prove that $\Gamma_{0,\wa}\in\iso^\otimes \left(\omega_0,\omega_1^{\left(\wa\right)}\right)$, we have to verify the two following assumptions:
\begin{itemize}
\item Condition to be an isomorphism of functors. Let 
\begin{center}
$\mathcal{X}:=\left(A_0,A_1,A_\infty,\wMzero,\wM_\infty\right)$ and $\mathcal{Y}:=\left(B_0,B_1,B_\infty,\wNzero ,\wN_\infty\right)$ 
\end{center}
be two objects of $\mathcal{C}_{E_0,E_\infty}$ and $\left(S_0,\wS_1,S_\infty\right)\in\ho_{\mathcal{C}_{E_0,E_\infty}}\left(\mathcal{X},\mathcal{Y}\right)$. The following diagram is commutative because in the category of connection data $\wNzero S_0=\wS_1\wMzero$:
$$\xymatrix{\omega_0\left(\mathcal{X}\right) \ar[r]^{S_0} \ar[d]_{\wMzero\left(\wa\right)} & \omega_0\left(\mathcal{Y}\right) \ar[d]^{\wNzero\left(\wa\right)}\\
\omega_1^{(\wa)}\left(\mathcal{X}\right)\ar[r]_{\wS_1\left(\wa\right)} & \omega_1^{(\wa)}\left(\mathcal{Y}\right).}$$
This shows that it is a morphism of functors. Moreover, it is an isomorphism because the $\wMzero\left(\wa\right)$ are isomorphisms.
\item Condition of tensor compatibility. For all $\mathcal{X}, \mathcal{Y}\in\obj\left(\mathcal{C}_{E_0,E_\infty}\right)$,
$$\Gamma_{0,\wa}\left(\mathcal{X}\right)\otimes\Gamma_{0,\wa}\left(\mathcal{Y}\right)=\wMzero\left(\wa\right)\otimes\wNzero\left(\wa\right)=\left(\wMzero\otimes\wNzero\right)\left(\wa\right)=\Gamma_{0,\wa}\left(\mathcal{X}\otimes\mathcal{Y}\right).$$
\end{itemize}
\end{proof} 

Similarly, we have:
\begin{prop}\label{prop:natural_transfo_gamma0b}
Let $\wb\in\sigi\setminus\widetilde{E_\infty}$. The natural transformation $$\Gamma_{\infty,\wb}: \left(A_0,A_1,A_\infty,\wMzero,\wM_\infty\right)\in\obj\left(\mathcal{C}_{E_0,E_\infty}\right)\leadsto \wM_\infty(\wb)$$ is an element of $\iso^\otimes \left(\omega_\infty,\omega_1^{\left(\wb\right)}\right)$. 
\end{prop}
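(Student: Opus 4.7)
The plan is to mirror the proof of Proposition \ref{prop:natural_transfo_gamma0} verbatim, swapping the roles of $\wMzero$ with $\wM_\infty$ and of the zone $\sigz\setminus\widetilde{E_0}$ with $\sigi\setminus\widetilde{E_\infty}$. The only new ingredient to check at the outset is that the evaluation $\wM_\infty(\wb)$ makes sense and lands in $GL_n(\mathbb{C})$. Since $\wM_\infty\in GL_n\left(\mathcal{M}\left(\sigi\right)\right)$ and $\wb\in\sigi$, this is purely a question of avoiding the singular locus: the assumption $\mathcal{X}\in\obj\left(\mathcal{C}_{E_0,E_\infty}\right)$ gives $S\left(\wM_\infty\right)\subset\widetilde{E_\infty}$, and $\wb\notin\widetilde{E_\infty}$, so $\wb$ is neither a pole of $\wM_\infty$ nor a zero of $\det\wM_\infty$, whence $\wM_\infty(\wb)\in GL_n(\mathbb{C})$.

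Next I would verify that $\Gamma_{0,\wb}$ is a morphism of functors from $\omega_\infty$ to $\omega_1^{(\wb)}$. Given a morphism $\left(S_0,\wS_1,S_\infty\right)$ from $\mathcal{X}=\left(A_0,A_1,A_\infty,\wMzero,\wM_\infty\right)$ to $\mathcal{Y}=\left(B_0,B_1,B_\infty,\wNzero,\wN_\infty\right)$ in $\mathcal{C}_{E_0,E_\infty}$, the defining relation $\wS_1\wM_\infty=\wN_\infty S_\infty$ evaluated at $\wb$ gives
$$\wS_1(\wb)\,\wM_\infty(\wb)=\wN_\infty(\wb)\,S_\infty,$$
which is exactly the commutativity of the square
$$\xymatrix{\omega_\infty(\mathcal{X}) \ar[r]^{S_\infty} \ar[d]_{\wM_\infty(\wb)} & \omega_\infty(\mathcal{Y}) \ar[d]^{\wN_\infty(\wb)}\\
\omega_1^{(\wb)}(\mathcal{X})\ar[r]_{\wS_1(\wb)} & \omega_1^{(\wb)}(\mathcal{Y})}$$
The vertical arrows are invertible by the first step, so $\Gamma_{0,\wb}$ is an isomorphism of functors.

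Finally, I would check the tensor compatibility. Since the tensor product on $\mathcal{C}_{E_0,E_\infty}$ is defined componentwise by the Kronecker product, the two matrices attached to $\mathcal{X}\otimes\mathcal{Y}$ in the $\infty$-slot are $\wM_\infty\otimes\wN_\infty$, and pointwise evaluation commutes with the Kronecker product:
$$\Gamma_{0,\wb}\left(\mathcal{X}\right)\otimes\Gamma_{0,\wb}\left(\mathcal{Y}\right)=\wM_\infty(\wb)\otimes\wN_\infty(\wb)=\left(\wM_\infty\otimes\wN_\infty\right)(\wb)=\Gamma_{0,\wb}(\mathcal{X}\otimes\mathcal{Y}).$$
Compatibility with the unit object is immediate. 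There is no genuine obstacle; the only subtle point in this proof is keeping track that $\sigi$ is chosen precisely so that $\wM_\infty$ is defined and, combined with $\wb\notin\widetilde{E_\infty}$, invertible at $\wb$, which is exactly the reason the category $\mathcal{C}_{E_0,E_\infty}$ was introduced in the previous subsection.
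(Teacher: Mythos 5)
Your proposal is correct and follows exactly the route the paper intends: the paper proves Proposition \ref{prop:natural_transfo_gamma0} in detail and simply states that the $\infty$ case holds ``similarly,'' which is precisely the verbatim transposition you carry out (naturality from $\wS_1\wM_\infty=\wN_\infty S_\infty$, invertibility of $\wM_\infty(\wb)$ since $\wb\notin\widetilde{E_\infty}\supset S\left(\wM_\infty\right)$, and tensor compatibility via the Kronecker product). Your explicit justification that the evaluation at $\wb$ is well defined and invertible is a point the paper leaves implicit, but it matches the intended argument.
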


\begin{prop} We have:
\begin{itemize}
\item The natural transformation $$\left(A_0,A_1,A_\infty,\wMzero,\wM_\infty\right)\in\obj\left(\mathcal{C}_{E_0,E_\infty}\right)\leadsto A_0$$ is an element of $\aut^\otimes \left(\omega_0\right)$.

\item The natural transformation $$\left(A_0,A_1,A_\infty,\wMzero,\wM_\infty\right)\in\obj\left(\mathcal{C}_{E_0,E_\infty}\right)\leadsto A_\infty$$ is an element of $\aut^\otimes \left(\omega_\infty\right)$.

\item Let $\wa\in\revc\setminus\left(\widetilde{E_0}\cup\widetilde{E_\infty}\cup\lbrace\widu\rbrace\right)$. The natural transformation $$\left(A_0,A_1,A_\infty,\wMzero,\wM_\infty\right)\in\obj\left(\mathcal{C}_{E_0,E_\infty}\right)\leadsto A_1$$ is an element of $\iso^\otimes \left(\omega_1^{\left(\wa\right)},\omega_1^{\left(\wa^p\right)}\right)$.
\end{itemize}
\end{prop}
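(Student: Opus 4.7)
The plan is to verify the three natural transformations satisfy the two defining conditions (being an isomorphism of functors and being tensor compatible), exactly in the style of Proposition \ref{prop:natural_transfo_gamma0}.

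For the first statement, I would take $\mathcal{X}=(A_0,A_1,A_\infty,\wMzero,\wM_\infty)$ and $\mathcal{Y}=(B_0,B_1,B_\infty,\wNzero,\wN_\infty)$ in $\obj(\mathcal{C}_{E_0,E_\infty})$, and any morphism $(S_0,\wS_1,S_\infty)\in\mr_\mathcal{C}(\mathcal{X},\mathcal{Y})$. The defining equation $S_0A_0=B_0S_0$ of morphisms in $\mathcal{C}$ then directly gives the commutative square
$$\xymatrix{\omega_0(\mathcal{X}) \ar[r]^{S_0} \ar[d]_{A_0} & \omega_0(\mathcal{Y}) \ar[d]^{B_0}\\ \omega_0(\mathcal{X}) \ar[r]_{S_0} & \omega_0(\mathcal{Y})}$$
showing that the assignment is a morphism of functors $\omega_0\to\omega_0$. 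Invertibility is immediate from $A_0\in GL_n(\mathbb{C})$. The tensor compatibility follows from the definition of the tensor product in $\mathcal{C}$ (which is componentwise, inherited from the Kronecker product on matrices): for $\mathcal{X},\mathcal{Y}\in\obj(\mathcal{C}_{E_0,E_\infty})$ the first component of $\mathcal{X}\otimes\mathcal{Y}$ is $A_0\otimes B_0$, hence
$$A_0\otimes B_0 = (\mathcal{X}\otimes\mathcal{Y})\leadsto A_0\otimes B_0.$$

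The second statement is handled identically, replacing $S_0$ by $S_\infty$, the equation $S_0A_0=B_0S_0$ by $S_\infty A_\infty=B_\infty S_\infty$, and using the third component of the tensor product.

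For the third statement, the key observation is the shift $\wa\mapsto\wa^p$ produced by $\puip$. Fix $\wa\in\revc\setminus(\widetilde{E_0}\cup\widetilde{E_\infty})$; note that this ensures $\wa$ and $\wa^p$ are neither zeros nor poles of the entries appearing. Given a morphism $(S_0,\wS_1,S_\infty):\mathcal{X}\to\mathcal{Y}$, the relation $\puip(\wS_1)A_1=B_1\wS_1$ evaluated at $\wa$ reads $\wS_1(\wa^p)A_1=B_1\wS_1(\wa)$, giving the commutative diagram
$$\xymatrix{\omega_1^{(\wa)}(\mathcal{X}) \ar[r]^{\wS_1(\wa)} \ar[d]_{A_1} & \omega_1^{(\wa)}(\mathcal{Y}) \ar[d]^{B_1}\\ \omega_1^{(\wa^p)}(\mathcal{X}) \ar[r]_{\wS_1(\wa^p)} & \omega_1^{(\wa^p)}(\mathcal{Y})}$$
so the assignment is a morphism of functors $\omega_1^{(\wa)}\to\omega_1^{(\wa^p)}$, invertible because $A_1\in GL_n(\mathbb{C})$. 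Tensor compatibility is again immediate from the componentwise definition of $\otimes$ in $\mathcal{C}$.

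No step presents any real difficulty here: the three verifications are symmetric and boil down to reading off the appropriate equation satisfied by morphisms of $\mathcal{C}$. The only point that requires (minor) attention is the third case, where one must remember that the fibre functors $\omega_1^{(\wa)}$ depend on the base point and that $A_1$ encodes precisely the shift by $\puip$; hence $A_1$ is naturally an isomorphism between $\omega_1^{(\wa)}$ and $\omega_1^{(\wa^p)}$ rather than an automorphism of a single fibre functor.
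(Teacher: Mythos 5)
Your proof is correct and follows exactly the route the paper intends: the paper leaves this proposition without an explicit proof, presenting it as an immediate variant of Proposition \ref{prop:natural_transfo_gamma0}, and your verification (naturality from the defining relations $S_0A_0=B_0S_0$, $S_\infty A_\infty=B_\infty S_\infty$, $\puip(\wS_1)A_1=B_1\wS_1$ evaluated at $\wa$, plus tensor compatibility from the componentwise Kronecker product) is precisely that argument. The only cosmetic caveat is that for the third transformation the relevant condition on $\wa$ is really $\wa\neq\widu$ (so that the fibre functors $\omega_1^{(\wa)}$, $\omega_1^{(\wa^p)}$ are defined and the Laurent-polynomial entries of $\wS_1$ can be evaluated), rather than avoidance of $\widetilde{E_0}\cup\widetilde{E_\infty}$, but this does not affect the validity of the argument.
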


We will keep the same notations for the restriction of $G$ to the Galois groupoid of $\mathcal{C}_{E_0,E_\infty}$.

\begin{cor}\label{cor:eltms_of_galoisgroupoid}
The local Galois groupoids $G_0,G_1,G_\infty$, the $\Gamma_{0,\wa}$, $\wa\in\sigz\setminus\widetilde{E_0}$ (defined in Proposition \ref{prop:natural_transfo_gamma0}) and the $\Gamma_{\infty,\wb}$, $\wb\in\sigi\setminus\widetilde{E_\infty}$ (defined in Proposition \ref{prop:natural_transfo_gamma0b}) are elements of the Galois groupoid $G$ of $\mathcal{C}_{E_0,E_\infty}$. The subgroupoid generated by these elements is transitive.
\end{cor}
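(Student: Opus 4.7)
The plan is to observe that this corollary is essentially a bookkeeping statement that compiles results already proved. By Definition \ref{defin:groupoid_C}, a morphism of the Galois groupoid $G$ of $\mathcal{C}_{E_0,E_\infty}$ is precisely an element of some $\iso^\otimes(\omega,\omega')$ where $\omega,\omega'$ range over the fibre functors $\omega_0,\omega_\infty,\omega_1^{(\wa)}$. So the task reduces to explaining, for each of the five families listed, why it lives in such a $\iso^\otimes$-space.

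For $\Gamma_{0,\wa}$ and $\Gamma_{0,\wb}$ the conclusion is immediate: Propositions \ref{prop:natural_transfo_gamma0} and \ref{prop:natural_transfo_gamma0b} already assert $\Gamma_{0,\wa}\in\iso^\otimes(\omega_0,\omega_1^{(\wa)})$ and $\Gamma_{0,\wb}\in\iso^\otimes(\omega_\infty,\omega_1^{(\wb)})$, and by Definition \ref{defin:groupoid_C} these sets coincide with $G(\omega_0,\omega_1^{(\wa)})$ and $G(\omega_\infty,\omega_1^{(\wb)})$ respectively.

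For the local Galois groupoids, I would follow exactly the argument already given for $G_0$ in the proposition that identifies it as a subgroupoid of the Galois groupoid of $\mathcal{C}$. Concretely, the projection functors
$$
\mathscr{P}_0\colon\mathcal{C}_{E_0,E_\infty}\to\mathcal{P}^{(0)},\quad
\mathscr{P}_1\colon\mathcal{C}_{E_0,E_\infty}\to\mathcal{P}^{(1)},\quad
\mathscr{P}_\infty\colon\mathcal{C}_{E_0,E_\infty}\to\mathcal{P}^{(\infty)}
$$
sending $(A_0,A_1,A_\infty,\wMzero,\wM_\infty)$ to $A_0,A_1,A_\infty$ respectively (and likewise on morphisms) satisfy $\omega_0=\omega_{l,0}\circ\mathscr{P}_0$, $\omega_1^{(\wa)}=\omega_{l,1}^{(\wa)}\circ\mathscr{P}_1$ and $\omega_\infty=\omega_{l,\infty}\circ\mathscr{P}_\infty$. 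Restriction of tensor natural transformations along these functors therefore yields maps
$$
\iso^\otimes(\omega_{l,0},\omega_{l,0})\to\aut^\otimes(\omega_0),\quad
\iso^\otimes(\omega_{l,1}^{(\wc)},\omega_{l,1}^{(\widetilde{d})})\to\iso^\otimes(\omega_1^{(\wc)},\omega_1^{(\widetilde{d})}),
$$
and similarly for $\infty$, whose images land in the morphism spaces of $G$. By \cite[Proposition 2.21]{DelMil}, each of these restriction maps is a closed immersion of proalgebraic groups (in the group cases) and the same Tannakian formalism gives the corresponding statement for isomorphism torsors between distinct fibre functors, so $G_0$, $G_1$, $G_\infty$ can indeed be regarded as subgroupoids of $G$.

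The only point requiring attention is that the embedding proposition for the Galois groupoids was stated for $\mathcal{C}$, not for its subcategory $\mathcal{C}_{E_0,E_\infty}$; but since $\mathcal{C}_{E_0,E_\infty}$ is a full Tannakian subcategory of $\mathcal{C}$ stable under tensor and abelian constructions (as verified just above), and since $\mathscr{P}_0,\mathscr{P}_1,\mathscr{P}_\infty$ restrict to exact tensor functors on it, the argument goes through verbatim. I do not expect any genuine obstacle here; the only mild care needed is the remark that for the $\Gamma$'s one must evaluate at $\wa\in\sigz\setminus\widetilde{E_0}$ (resp.\ $\wb\in\sigi\setminus\widetilde{E_\infty}$), which is precisely the condition ensuring $\wMzero(\wa)$ (resp.\ $\wM_\infty(\wb)$) is defined and invertible on every object of $\mathcal{C}_{E_0,E_\infty}$.
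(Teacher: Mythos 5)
Your proposal is correct and follows essentially the same route as the paper: the corollary is indeed just a compilation of Propositions \ref{prop:natural_transfo_gamma0} and \ref{prop:natural_transfo_gamma0b} together with the earlier identification of $G_0,G_1,G_\infty$ as subgroupoids of $G$ via the projection functors, and the paper handles the passage from $\mathcal{C}$ to $\mathcal{C}_{E_0,E_\infty}$ exactly as you do, by restricting the fibre functors and keeping the same notations. No gap.
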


In the next subsection, we prove that they generate a Zariski-dense subgroupoid of the Galois groupoid $G$.

\subsection{Analogue of the density theorem of Schlesinger for Mahler equations}\label{sec:analogue_density-thm}
\subsubsection{Density criteria}\leavevmode\par
We use the following results (see \cite[Proposition 3.1(b), Remark 3.2(a)]{Deligne1982} or the proof of \cite[Proposition 2.8]{DelMil}):

\begin{thm}
Let $\mathcal{C}$ be a Tannakian category and $\omega$ a fibre functor for $\mathcal{C}$. Let $\left<\mathcal{X}\right>$ be the Tannakian subcategory generated by the object $\mathcal{X}$ of $\mathcal{C}$. Then the morphism 
$$ \begin{array}{rcl}
 \aut^\otimes\left({\omega}_{\mid\left<\mathcal{X}\right>}\right)& \rightarrow & \aut\left(\omega\left(\mathcal{X}\right)\right) \\
  g & \mapsto & g\left(\mathcal{X}\right)
    \end{array}$$
is injective. It identifies the Galois group of $\mathcal{X}$, denoted by $G\left(\mathcal{X}\right):=\aut^\otimes\left({\omega}_{\mid\left<\mathcal{X}\right>}\right)$, with an algebraic subgroup of $\aut\left(\omega\left(\mathcal{X}\right)\right)\simeq \GL_n\left(\mathbb{C}\right)$ ($n$ is the rank of $\mathcal{X}$ namely the dimension of the $\mathbb{C}$-vector space $\omega\left(\mathcal{X}\right)$).
\end{thm}

The following result is a density criterion, we keep the notations introduced in the previous theorem:

\begin{thm}\label{thm:density_criterion_grps}
Let $H$ be a subgroup of $G=\aut^\otimes(\omega)$. If $\mathcal{X}$ is an object of $\mathcal{C}$, we denote by $H\left(\mathcal{X}\right)$ the image of $H$ in $G\left(\mathcal{X}\right)$.

We assume that for all objects $\mathcal{Y}$ of $\mathcal{C}$, for all $y\in\omega\left(\mathcal{Y}\right)$, if the line $\mathbb{C}y$ is stable by $H\left(\mathcal{Y}\right)$ then it is also stable by $G\left(\mathcal{Y}\right)$.

Then, for all objects $\mathcal{X}$ of $\mathcal{C}$, $H\left(\mathcal{X}\right)$ is Zariski-dense in $G\left(\mathcal{X}\right)$.
\end{thm}

Herebelow, it is the analogue of the previous density criterion for groupoids. 
\begin{thm}\label{thm:density_criterion_groupoid}
We consider a transitive groupoid $G$ whose objects are fibre functors $\omega_i$ for a Tannakian category $\mathcal{C}$ and whose morphisms from the object $\omega_i$ to the object $\omega_j$ are the elements of $G\left(\omega_i,\omega_j\right):=\iso^\otimes\left(\omega_i,\omega_j\right)$. Let $H$ be a subgroupoid of $G$ which is transitive.

We assume that for all objects $\mathcal{Y}$ of $\mathcal{C}$, for all $y_i\in\omega_i\left(\mathcal{Y}\right)$, if the lines $\mathbb{C}y_i$ are globally stable by $H\left(\mathcal{Y}\right)$ then they are also globally stable by $G\left(\mathcal{Y}\right)$.

Then, for all objects $\mathcal{X}$ of $\mathcal{C}$, $H\left(\mathcal{X}\right)$ is Zariski-dense in $G\left(\mathcal{X}\right)$.
\end{thm}

In a Picard-Vessiot theory, the main argument to prove (analogues of) the density theorem of Schlesinger for difference/differential Galois groups is the Galois correspondence. In the theory of Tannakian categories, it corresponds to Theorem \ref{thm:density_criterion_groupoid}.

\subsubsection{Density theorem in the regular singular case}\leavevmode\par

We consider the following regular singular Mahler system at $0,1$ and $\infty$ 
$$\puip(Y)=AY$$  
with $A(z)\in \GL_n\left(\mathbb{C}(z)\right)$.
    
We recall that we consider the full subcategory $\mathcal{C}_{E_0,E_\infty}$ of $\mathcal{C}_{rs}$.

\begin{thm}\label{thm:density_regsing}
The local Galois groupoids $G_0, G_1, G_\infty$ and the Galoisian isomorphisms
$\Gamma_{0,\widetilde{z_0}}$, $\Gamma_{\infty,\widetilde{z_\infty}}$ for all $\widetilde{z_0}\in\sigz\setminus\widetilde{E_0}$, $\widetilde{z_\infty}\in\sigi\setminus\widetilde{E_\infty}$ generate a Zariski-dense subgroupoid $H$ of the Galois groupoid $G$ of $\mathcal{C}_{E_0,E_\infty}$. 
\end{thm}
 
\begin{proof}
To prove this theorem, we use the density criterion given in Theorem \ref{thm:density_criterion_groupoid}. Let 
$$\mathcal{X}=\left(A_0,A_1,A_\infty, \wMzero, \wM_\infty\right)$$
be an object of $\mathcal{C}_{E_0,E_\infty}$. For each object $\omega_0$, $\omega_1^{(\wc)}$, $\omega_\infty$, $\wc\in\revc\setminus\left(\widetilde{E_0}\cup\widetilde{E_\infty}\cup \lbrace\widu\rbrace\right)$ of $G$, we choose a line $D_i\subset\omega_i\left(\mathcal{X}\right)=\mathbb{C}^n$, $i=0,\infty$ and $\widetilde{D_1}^{(\wc)}\subset\omega_1^{(\wc)}\left(\mathcal{X}\right)=\mathbb{C}^n$. We assume that this family of lines is globally stable by $H$. We have to show that this family of lines is also globally stable by $G$.

We denote by $\rho : G\rightarrow \GL\left(\omega\left(\mathcal{X}\right)\right)$ the representation of $G$ corresponding to $\mathcal{X}$ by Tannaka duality.

The restriction $\rho_{\mid G_1} : G_1\rightarrow \GL\left(\omega\left(\mathcal{X}\right)\right)$ is the representation corresponding to $A_1$ by Tannaka duality for the category $\mathcal{P}^{(1)}$. By hypothesis, the lines $\widetilde{D_1}^{(\wc)}$ are globally stable by the action of $G_1$ so it induces a subrepresentation of rank $1$ of $\rho_{\mid G_1}$. Therefore, by Tannaka duality, it comes from a subobject of rank $1$ of $A_1$ in $\mathcal{P}^{(1)}$. This subobject is an object $a_1$ of $\mathcal{P}^{(1)}$ of rank $1$ and a monomorphism $\widetilde{d_1}:a_1\rightarrow A_1$ of $\mathcal{P}^{(1)}$. 
In short, $a_1\in\mathbb{C}^\star$ and $\widetilde{d_1}$ is a column matrix whose entries are Laurent polynomials in $\wln$ such that
\begin{equation}\label{eq:A1_and_a1}
\puip\left(\widetilde{d_1}\right)a_1=A_1\widetilde{d_1}
\end{equation}
and $$\omega_1^{(\wc)}\left(\widetilde{d_1}\right)\mathbb{C}=\widetilde{d_1}\left(\wc\right)\mathbb{C}=\widetilde{D_1}^{(\wc)}.$$

The restriction $\rho_{\mid G_0} : G_0\rightarrow \GL\left(\omega\left(\mathcal{X}\right)\right)$ is a representation which corresponds to $A_0$ by Tannaka duality for the category $\mathcal{P}^{(0)}$. By hypothesis, the line $D_0$ is stable by the action of $G_0$ so it induces a subrepresentation of rank $1$ of $\rho_{\mid G_0}$. Therefore, it comes from a subobject of rank $1$ of $A_0$ in $\mathcal{P}^{(0)}$ namely an object $a_0$ of $\mathcal{P}^{(0)}$ of rank $1$ and a monomorphism $d_0 : a_0\rightarrow A_0$ of $\mathcal{P}^{(0)}$. 
In short, $a_0\in\mathbb{C}^\star$ and $d_0\in\mathbb{C}^n$ is such that
\begin{equation}\label{eq:A0_and_a0}
d_0a_0=A_0d_0
\end{equation}
and $$\mathbb{C}d_0=D_0.$$

Similarly, for the category $\mathcal{P}^{(\infty)}$ and the groupoid $G_\infty$, we have that
\begin{equation}\label{eq:Ainfty_and_ainfty}
d_\infty a_\infty=A_\infty d_\infty
\end{equation}
with $a_\infty\in\mathbb{C}^\star$, $d_\infty\in\mathbb{C}^n$ and $$\mathbb{C}d_\infty=D_\infty.$$

For all $\wa\in\sigz\setminus\widetilde{E_0}$, the stability under $\Gamma_{0,\wa}$ of the family of lines means that $\Gamma_{0,\wa}\left(\mathcal{X}\right):~ \omega_0\left(\mathcal{X}\right)\rightarrow\omega_1^{(\wa)}\left(\mathcal{X}\right)$ satisfies $\Gamma_{0,\wa}\left(\mathcal{X}\right)D_0=\widetilde{D_1}^{\left(\wa\right)}$, that is,
$$\wMzero\left(\wa\right)D_0=\widetilde{D_1}^{\left(\wa\right)}.$$
Consequently, for all $\wa\in\sigz\setminus\widetilde{E_0}$, there exists $\wmzero\left(\wa\right)\in\mathbb{C}$ such that $\wMzero\left(\wa\right)d_0=\wmzero\left(\wa\right)\widetilde{d_1}\left(\wa\right)$. Therefore, we obtain a function $\wmzero:\sigz\rightarrow\mathbb{C}$ such that
\begin{equation}\label{eq:Mzero_mzero}
\wMzero d_0=\wmzero\widetilde{d_1}.
\end{equation}
Similarly, for all $\wb\in\sigi\setminus\widetilde{E_\infty}$, $D_\infty=\wM_\infty\left(\wb\right)^{-1}D_1^{\left(\wb\right)}$ and there exists $\wm_\infty:\sigi\rightarrow\mathbb{C}$ such that
\begin{equation}\label{eq:Minfty_minfty}
\wM_\infty d_\infty=\wm_\infty\widetilde{d_1}.
\end{equation}
Thus, we have:
\begin{itemize}
\item $\mathcal{X}':=\left(a_0,a_1,a_\infty,\wmzero,\wm_\infty\right)$ is an object of $\mathcal{C}_{E_0,E_\infty}$. Indeed, the equality \eqref{eq:Mzero_mzero} implies that $\wmzero$ is a meromorphic function on $\sigz$. 
From the equality
$$\puip\left(\wMzero\right)=A_1\wMzero A_0^{-1},$$
satisfied in the category $\mathcal{C}_{E_0,E_\infty}$  (see Section \ref{sec:cat_connections}), we have 
$$\puip\left(\wMzero\right)d_0=A_1\wMzero A_0^{-1}d_0$$
and the equalities \eqref{eq:A1_and_a1}, \eqref{eq:A0_and_a0} imply
$$\puip\left(\wmzero\right)=a_1\wmzero a_0^{-1}.$$
The same argument shows that $\wm_\infty$ is a meromorphic function on $\sigi$ and satisfies
$$\puip\left(\wm_\infty\right)=a_1\wm_\infty a_\infty^{-1}.$$
Moreover, the singular loci are such that $S\left(\widetilde{m_i}\right)\subset\widetilde{E_i}$, $i=0,\infty$, for more details see the following remark.

\begin{rem}
Let us look at the singular locus of $\wmzero$. The equality \eqref{eq:Mzero_mzero},
$$\wmzero^{-1}d_0=\wMzero^{-1}\widetilde{d_1},$$
implies that 
$$\pol\left(\wmzero^{-1}\right)\subset\left(\pol\left(\wMzero^{-1}\right)\cup\pol\left(\widetilde{d_1}\right)\right)\cap\sigz\subset\widetilde{E_0}.$$
Also, $\zer\left(\wmzero^{-1}\right)\subset\widetilde{E_0}$ because if $\widetilde{\alpha}\in\zer\left(\wmzero^{-1}\right)\cap\left(\sigz\setminus\widetilde{E_0}\right)$ then $\wMzero\left(\widetilde{\alpha}\right)^{-1}$ is well-defined and invertible but $0=\wMzero\left(\widetilde{\alpha}\right)^{-1}\widetilde{d_1}\left(\widetilde{\alpha}\right)$. This implies that $\widetilde{d_1}\left(\widetilde{\alpha}\right)= 0$ and $\vect_\mathbb{C}\widetilde{d_1}\left(\widetilde{\alpha}\right)=D_1^{\left(\widetilde{\alpha}\right)}= 0$, which is absurd. Therefore,
$$S\left(\wmzero\right)\subset\widetilde{E_0}.$$
Similarly,
$$S\left(\wm_\infty\right)\subset\widetilde{E_\infty}.$$
\end{rem}

In order to prove that $\mathcal{X}'$ is an object of $\mathcal{C}_{E_0,E_\infty}$, it remains to prove that there exist nonzero functions $\ww_1\in\mathcal{M}\left(\revc\right)$, $w_0\in\mathcal{M}\left(D(0,1)\right)$ and $w_\infty\in\mathcal{M}\left(\mathbb{P}^1\left(\mathbb{C}\right)\setminus\overline{D}(0,1)\right)$ such that
$$\left\lbrace
\begin{array}{l}
\ww_1\wmzero=\pi^\star w_0 \\
\ww_1\wm_\infty=\pi^\star w_\infty .
\end{array}
    \right.$$ 
Since $\left(A_0,A_1,A_\infty,\wMzero ,\wM_\infty\right)$ is an object of $\mathcal{C}_{E_0,E_\infty}$, there exist
\begin{center} 
 $\wW_1\in \GL_n\left(\mathcal{M}\left(\revc\right)\right)$, $W_0\in \GL_n\left(\mathcal{M}\left(D(0,1)\right)\right)$, $W_\infty\in \GL_n\left(\mathcal{M}\left(\mathbb{P}^1\left(\mathbb{C}\right)\setminus\overline{D}(0,1)\right)\right)$
\end{center}
such that $\wW_1\wMzero=\pi^\star W_0$ and $\wW_1\wM_\infty=\pi^\star W_\infty$. Therefore,
$$\left\lbrace
\begin{array}{l}
\pi^\star W_0 d_0=\wW_1\wMzero d_0=\wmzero\wW_1\widetilde{d_1}\quad\textnormal{on}\quad\sigz \\
\pi^\star W_\infty d_\infty=\wW_1\wM_\infty d_\infty=\wm_\infty\wW_1\widetilde{d_1}\quad\textnormal{on}\quad\sigi.
\end{array}
    \right.$$
A nonzero entry of the column matrix $\wW_1\widetilde{d_1}$ satisfies the conditions to be $\ww_1$. 
\item $m=\left(d_0,\widetilde{d_1},d_\infty\right):\mathcal{X}'\leadsto\mathcal{X}$ is a morphism of $\mathcal{C}_{E_0,E_\infty}$. This is due to the fact that $d_0, d_\infty\in\mathbb{C}^n$, the entries of $\widetilde{d_1}$ are Laurent polynomials in $\wln$ and
$$\left\lbrace
\begin{array}{l}
\widetilde{d_1}\wmzero=\wMzero d_0\\
\widetilde{d_1}\wm_\infty=\wM_\infty d_\infty .
\end{array}
    \right.$$
\end{itemize} 
The family of lines, fixed by the subgroupoid $H$, are globally stable by the groupoid $G$ because:
\begin{itemize}
\item for all $g_i\in\aut^\otimes\left({\omega_i}_{\mid\left<\mathcal{X}\right>}\right)$, $i=0,\infty$, the following diagram is commutative
$$\xymatrix{\omega_i\left(\mathcal{X}'\right) \ar[r]^{g_i\left(\mathcal{X}'\right)} \ar[d]_{\omega_i(m)=d_i} & \omega_i\left(\mathcal{X}'\right) \ar[d]^{\omega_i(m)=d_i}\\
\omega_i\left(\mathcal{X}\right)\ar[r]_{g_i\left(\mathcal{X}\right)} & \omega_i\left(\mathcal{X}\right)}$$
that is, $g_i\left(\mathcal{X}\right)d_i=d_i\underbrace{g_i\left(\mathcal{X}'\right)}_{\in\mathbb{C}^\star}=g_i\left(\mathcal{X}'\right)d_i$ so $g_i\left(\mathcal{X}\right)D_i=D_i$.
\item for all $g_1\in\iso^\otimes\left({\omega_1^{\left(\widetilde{c_1}\right)}}_{\mid\left<\mathcal{X}\right>},{\omega_1^{\left(\widetilde{c_2}\right)}}_{\mid\left<\mathcal{X}\right>}\right)$, the following diagram is commutative
$$\xymatrix{\omega_1^{\left(\widetilde{c_1}\right)}\left(\mathcal{X}'\right) \ar[r]^{g_1\left(\mathcal{X}'\right)} \ar[d]_{\omega_1^{\left(\widetilde{c_1}\right)}(m)=\widetilde{d_1}\left(\widetilde{c_1}\right)} & \omega_1^{\left(\widetilde{c_2}\right)}\left(\mathcal{X}'\right) \ar[d]^{\omega_1^{\left(\widetilde{c_2}\right)}(m)=\widetilde{d_1}\left(\widetilde{c_2}\right)}\\
\omega_1^{\left(\widetilde{c_1}\right)}\left(\mathcal{X}\right)\ar[r]_{g_1\left(\mathcal{X}\right)} & \omega_1^{\left(\widetilde{c_2}\right)}\left(\mathcal{X}\right)}$$
so $g_1\left(\mathcal{X}\right)\widetilde{D_1}^{\left(\widetilde{c_1}\right)}=\widetilde{D_1}^{\left(\widetilde{c_2}\right)}$.
\item similarly, for all $h_0\in\iso^\otimes\left({\omega_0}_{\mid\left<\mathcal{X}\right>},{\omega_1^{\left(\wa\right)}}_{\mid\left<\mathcal{X}\right>}\right)$ and $h_\infty\in\iso^\otimes\left({\omega_\infty}_{\mid\left<\mathcal{X}\right>},{\omega_1^{\left(\wb\right)}}_{\mid\left<\mathcal{X}\right>}\right)$, we can prove that $h_0\left(\mathcal{X}\right)D_0=\widetilde{D_1}^{\left(\wa\right)}$ and $h_\infty\left(\mathcal{X}\right)D_\infty=\widetilde{D_1}^{\left(\wb\right)}$.
\end{itemize}
\end{proof}

\subsubsection{Density theorem in the regular case}
\begin{defin}
Let $i\in\lbrace 0,1,\infty\rbrace$. A Mahler system $\puip(Y)=AY$, $A\in\GL_n\left(\mathbb{C}(z)\right)$, is regular at $i$ if it is strictly Fuchsian at $i$ and $A(i)=I_n$.

We say that $A\in\obj\left(\mathcal{E}_{rs}\right)$ is a \emph{regular object of the category $\mathcal{E}_{rs}$} if the system $\puip(Y)=AY$ is meromorphically equivalent at $0$, $1$ and $\infty$ to a regular system at $0$, $1$ and $\infty$ respectively.

We say that $\left(A_0,A_1,A_\infty,\wMzero,\wM_\infty\right)\in\obj\left(\mathcal{C}_{rs}\right)$ of rank $n\in\mathbb{N}^\star$ is a \emph{regular object of the category $\mathcal{C}_{rs}$} if $A_0=A_1=A_\infty=I_n$.
\end{defin}
 
We denote by $\mathcal{E}_r$ the full subcategory of $\mathcal{E}_{rs}$ whose objects are the regular objects of $\mathcal{E}_{rs}$.

We denote by $\mathcal{C}_r$ the full subcategory of $\mathcal{C}_{rs}$ whose objects are the regular objects of $\mathcal{C}_{rs}$.

These categories are Tannakian subcategories of $\mathcal{E}_{rs}$ and $\mathcal{C}_{rs}$, respectively. We can check that the categories $\mathcal{E}_r$ and $\mathcal{C}_r$ are equivalent.

We denote by $\mathcal{C}_{r,E_0,E_\infty}$ the full subcategory of $\mathcal{C}_{E_0,E_\infty}$ whose objects are the regular objects. It is a neutral Tannakian category over $\mathbb{C}$. From Theorem \ref{thm:density_regsing}, we obtain:

\begin{cor}
Let $\mathcal{X}=\left(I_n,I_n,I_n,\wMzero , \wM_\infty\right)$ be an object of the category $\mathcal{C}_{r,E_0,E_\infty}$.  The subgroup $H$ of $G=\aut^\otimes\left(\omega_1\right)$ containing all
$$
\begin{array}{l}
\mathcal{X}\leadsto \wMzero\left(\wa_1\right)\wMzero\left(\wa_2\right)^{-1}\\
\mathcal{X}\leadsto \wM_\infty\left(\wb_1\right)\wM_\infty\left(\wb_2\right)^{-1}
\end{array}
$$
with $\wa_1, \wa_2\in\sigz\setminus\widetilde{E_0}$ and $\wb_1,\wb_2\in\sigi\setminus\widetilde{E_\infty}$ is Zariski-dense in $G$. 
 \end{cor}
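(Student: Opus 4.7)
The plan is to deduce this corollary from Theorem \ref{thm:density_regsing} by restricting to the Tannakian subcategory $\mathcal{C}_{r,E_0,E_\infty}$, exploiting two simplifications afforded by the regular setting. First, given any morphism $(S_0, \wS_1, S_\infty)$ between objects of $\mathcal{C}_r$, the relation $\puip(\wS_1) A_1 = B_1 \wS_1$ collapses to $\puip(\wS_1) = \wS_1$; since the entries of $\wS_1$ are Laurent polynomials in $\wln$, this forces $\wS_1$ to be a constant matrix. Consequently the fibre functors $\omega_1^{(\wc)}$ coincide on $\mathcal{C}_{r,E_0,E_\infty}$, giving a single functor $\omega_1$, and the Galois groupoid of $\mathcal{C}_{r,E_0,E_\infty}$ has three objects $\omega_0, \omega_1, \omega_\infty$. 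Second, by Proposition \ref{prop:grp_isom_Zalg}, every element of $G_0, G_1, G_\infty$ acts on $\mathcal{X} \in \mathcal{C}_r$ via $I_n^{(\gamma,\lambda)} = I_n$, so these local groupoids act trivially.

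With these two observations, the subgroupoid $H_{\text{full}}$ of Theorem \ref{thm:density_regsing} acts nontrivially on $\mathcal{C}_{r,E_0,E_\infty}$ only through the morphisms $\Gamma_{0,\wa}: \omega_0 \to \omega_1$ and $\Gamma_{\infty,\wb}: \omega_\infty \to \omega_1$ (and their inverses). Since no direct edge $\omega_0 \to \omega_\infty$ sits among the generators, every loop at $\omega_1$ built from these decomposes as an alternating product of excursions $\Gamma_{0,\wa_1}\Gamma_{0,\wa_2}^{-1}$ (through $\omega_0$) and $\Gamma_{\infty,\wb_1}\Gamma_{\infty,\wb_2}^{-1}$ (through $\omega_\infty$), whose values on $\mathcal{X}$ are exactly $\wMzero(\wa_1)\wMzero(\wa_2)^{-1}$ and $\wM_\infty(\wb_1)\wM_\infty(\wb_2)^{-1}$, the generators of $H$ in the statement.

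It remains to transfer the Zariski-density of $H_{\text{full}}$ in $G$ to Zariski-density of $H$ in $\aut^\otimes(\omega_1)$ on the subcategory. Since $\mathcal{C}_{r,E_0,E_\infty}$ is a Tannakian subcategory of $\mathcal{C}_{E_0,E_\infty}$, restriction induces a surjective morphism of proalgebraic groupoids $G \twoheadrightarrow G'$, where $G'$ denotes the Galois groupoid of $\mathcal{C}_{r,E_0,E_\infty}$. By Theorem \ref{thm:density_regsing}, $H_{\text{full}}$ is Zariski-dense in $G$, so its image in $G'$ remains Zariski-dense; restricting to the vertex group $G'(\omega_1, \omega_1) = \aut^\otimes(\omega_1)$ then yields the desired density of $H$.

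The main obstacle I anticipate is this final descent step: justifying carefully that Zariski-density of a subgroupoid descends to density at each vertex group and that restriction to a Tannakian subcategory preserves density. A cleaner alternative avoids the descent by applying the group-theoretic density criterion (Theorem \ref{thm:density_criterion_grps}) directly in $\mathcal{C}_{r,E_0,E_\infty}$ with fibre functor $\omega_1$: starting from an $H$-stable line $\mathbb{C}y \subset \omega_1(\mathcal{Y})$ for a regular object $\mathcal{Y} = (I_n, I_n, I_n, \wNzero, \wN_\infty)$, one fixes base points $\wa_0 \in \sigz \setminus \widetilde{E_0}$ and $\wb_0 \in \sigi \setminus \widetilde{E_\infty}$ and sets $y_0 := \wNzero(\wa_0)^{-1}y$, $y_\infty := \wN_\infty(\wb_0)^{-1}y$; the stability hypothesis yields meromorphic functions $\lambda_0$ on $\sigz$ and $\lambda_\infty$ on $\sigi$ with $\wNzero y_0 = \lambda_0 y$ and $\wN_\infty y_\infty = \lambda_\infty y$, and one then builds a rank-one subobject of $\mathcal{Y}$ in $\mathcal{C}_{r,E_0,E_\infty}$ analogous to the $\mathcal{X}' = (a_0, a_1, a_\infty, \wmzero, \wm_\infty)$ of the proof of Theorem \ref{thm:density_regsing}, whose existence forces $\mathbb{C}y$ to be $G$-stable.
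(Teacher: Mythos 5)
Your proposal is correct and follows essentially the same route as the paper, whose entire proof is the one-line observation that Theorem \ref{thm:density_regsing} applies and that the local Galois groupoids $G_0$, $G_1$, $G_\infty$ act trivially because $A_0=A_1=A_\infty=I_n$. You have merely made explicit the details the paper leaves implicit (the collapse of the fibre functors $\omega_1^{(\wc)}$ to a single $\omega_1$ since $\puip(\wS_1)=\wS_1$ forces $\wS_1$ constant, the decomposition of loops at $\omega_1$ into the stated excursions, and the passage from groupoid density to density of the vertex group on the regular subcategory), all of which are sound.
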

 
 \begin{proof}
It follows from Theorem \ref{thm:density_regsing} and the fact that in this case the local Galois groupoids $G_0,G_1,G_\infty$ are trivial because $A_0=A_1=A_\infty=I_n$.
 \end{proof}
 
\begin{appendix}
\addtocontents{toc}{\setcounter{tocdepth}{1}}

\section{Tannakian Categories}
 \label{sec:appendix}
 
\subsection{Abelian categories}\leavevmode\par
For more details, the reader is referred to \cite{Sch72}. 

\begin{defin}
Let $\mathcal{C}$ be a category with zero morphisms, denoted by $0$. Let $X,Y\in\obj\left(\mathcal{C}\right)$ and $f\in\ho_{\mathcal{C}}\left(X,Y\right)$.  A \emph{kernel} of $f$ is a pair $\left(K, k: K\rightarrow X\right)$ where $K\in\obj\left(\mathcal{C}\right)$ and $k\in\ho_{\mathcal{C}}\left(K,X\right)$ such that:
\begin{itemize}
\item $fk=0$, 
\item for all $K'\in\obj\left(\mathcal{C}\right)$ and all morphisms $k':K'\rightarrow X$, if $f k'=0$ then there exists a unique morphism $u:K'\rightarrow K$ such that $k'=k u$. 
\end{itemize}
A \emph{cokernel} of $f$ is a pair $\left(Q, q: Y\rightarrow Q\right)$ where $Q\in\obj\left(\mathcal{C}\right)$ and $q\in\ho_{\mathcal{C}}\left(Y,Q\right)$ such that: 
\begin{itemize}
\item $q f=0$ ;
\item for all $Q'\in\obj\left(\mathcal{C}\right)$ and all morphisms $q':Y\rightarrow Q'$, if $q' f=0$ then there exists a unique morphism $v:Q\rightarrow Q'$ such that $q'=v q$. 
\end{itemize}  
\end{defin}

\begin{defin}
An \emph{abelian category} is a category which satisfies the following conditions:
\begin{description}\itemsep=0pt
\item[$A_0$] There is a zero object.
\item[$A_1$] There are finite products.
\item[$B_1$] There are finite coproducts.
\item[$A_2$] Every morphism has a kernel.
\item[$B_2$] Every morphism has a cokernel.
\item[$A_3$] Every monomorphism is a kernel.
\item[$B_3$] Every epimorphism is a cokernel.
\end{description}
\end{defin}

\begin{rem}
The condition $B_1$ (or $A_1$) can be dropped (see \cite[Proposition 12.5.1]{Sch72}).
\end{rem}

For further details on what follows, the reader is referred to \cite{DelMil}. 
\subsection{Rigid tensor categories}

\begin{defin}
A tensor category $\left(\mathcal{C},\otimes\right)$, see \cite[Definition 1.1]{DelMil}, is \emph{rigid} if:
\begin{itemize}
\item internal Hom exists for all $X,Y\in\obj\left(\mathcal{C}\right)$, it is denoted by $\underline{\text{Hom}}\left(X,Y\right)$,
\item the morphisms 
$$\underline{\text{Hom}}\left(X_1,Y_1\right)\otimes \underline{\text{Hom}}\left(X_2,Y_2\right) \rightarrow \underline{\text{Hom}}\left(X_1\otimes X_2, Y_1\otimes Y_2\right)$$ are isomorphisms for all $X_1,X_2,Y_1,Y_2\in\obj\left(\mathcal{C}\right)$, 
\item all objects of $\mathcal{C}$ are reflexive.
\end{itemize}
\end{defin}

\subsection{Tannakian Categories}\leavevmode\par
 
From \cite[Definition 1.15]{DelMil} and \cite[Proposition 1.16]{DelMil}, we have the following definition.
\begin{defin}
A \emph{rigid abelian tensor category} is a rigid tensor category $\left(\mathcal{C},\otimes\right)$ such that $\mathcal{C}$ is an abelian category.
\end{defin}

Let $k$ be a field and $K$ be a field extension of $k$.
\begin{defin}
A rigid abelian tensor category $\mathcal{C}$, whose ring of endomorphisms of the unit object is $k$, is a \emph{Tannakian category over $k$} if it admits an exact faithful $k$-linear tensor functor 
$
\mathcal{C}\rightarrow\vect_{K}^f.
$
Any such functor is said to be a \emph{fibre functor} for $\mathcal{C}$ with values in $K$. 

A Tannakian category over $k$ is a \emph{neutral Tannakian category over $k$} if it admits a fibre functor with values in $k$.
\end{defin}

\begin{ex}
Let $G$ be an affine group scheme over $k$. The category $\mbox{Rep}_k\left(G\right)$ of finite dimensional representations of $G$ over $k$ is a neutral Tannakian category over $k$.
\end{ex}
 
Let $\left(\mathcal{C},\otimes\right)$ be a tensor category, $R$ be a $k$-algebra and $\mbox{Mod}_R$ be the category of finitely generated $R$-modules. We let $\Phi_R:\vect_{k}^f\rightarrow \mbox{Mod}_R$, $V\in\obj\left(\vect_{k}^f\right)\leadsto V\otimes_k R$ be the canonical tensor functor.
\begin{defin}
Let $F,G:\mathcal{C}\rightarrow\vect_{k}^f$ be two tensor functors. The functor $\underline{\iso}^\otimes\left(F,G\right)$ is the functor of $k$-algebra such that for all $k$-algebra $R$, 
$$\underline{\iso}^\otimes\left(F,G\right)(R):=\iso^\otimes\left(\Phi_R\circ F,\Phi_R\circ G\right).$$
We write $$\underline{\aut}^\otimes\left(F\right):=\underline{\iso}^\otimes\left(F,F\right).$$
\end{defin}

The main theorem of the theory of Tannakian categories is the following (see \cite[Theorem 2.11]{DelMil}).

\begin{thm}
Let $\mathcal{C}$ be a neutral Tannakian category over $k$ and $\omega: \mathcal{C}\rightarrow \vect_{k}^f$ be a fibre functor. Then:
\begin{itemize}
\item the functor $\underline{\aut}^\otimes\left(\omega\right)$ is represented by an affine group scheme $G$;
\item the functor $\mathcal{C}\rightarrow\mbox{Rep}_k\left(G\right)$ defined by $\omega$ is an equivalence of categories.
\end{itemize}
\end{thm}
\end{appendix}

\vspace{0.5cm}

\textbf{Acknowledgements.} This work was performed within the framework of the LABEX MILYON (ANR-10-LABX-0070) of Universit\'e de Lyon, within the program ``Investissements d'Avenir'' (ANR-11-IDEX- 0007) operated by the French National Research Agency (ANR).

\bibliographystyle{alpha}
\bibliography{densitythm}

\end{document}